\pgfplotsset{compat=1.12}
\newtheorem{theorem}{Theorem}[section]
\newtheorem{proposition}[theorem]{Proposition}
\newtheorem{corollary}[theorem]{Corollary}
\newtheorem{lemma}[theorem]{Lemma}
\theoremstyle{definition}
\newtheorem{remark}[theorem]{Remark}
\numberwithin{equation}{section}
\def\d{\delta}
\def\eps{\varepsilon}
\def\cD{\mathcal{D}}
\def\cB{\mathcal{B}}
\def\cA{\mathcal{A}}
\def\N{\mathbb{N}}
\def\R{\mathbb{R}}
\def\supp{\mathrm{supp\,}}
\def\dist{\mathrm{dist}}
\def\hbar{\bar{h}}
\def\phib{\boldsymbol{\phi}}
\def\Ical{\mathcal{I}}
\def\Omegae{\Omega_\eps}
\def\le{\leqslant}
\def\ge{\geqslant}
\def\le{\leqslant}
\renewcommand{\epsilon}{\varepsilon}
\renewcommand{\geq}{\geqslant}
\renewcommand{\leq}{\leqslant}
\newcommand{\rst}[1]{\ensuremath{{\mathbin |}%
\raise-.5ex\hbox{$#1$}}}
\newcommand{\mf}[1]{\mathbf{#1}}
\newcommand{\loc}{\mathrm{loc}}
\newcommand{\bs}[1]{\boldsymbol{#1}}
 \renewcommand{\(}{\left(}
\renewcommand{\)}{\right)}
\definecolor{verde}{rgb}{0,0.35,0.1} 
\definecolor{rosso}{rgb}{0.7,0,0}
\definecolor{blue}{rgb}{0,0,1}
\definecolor{viola}{rgb}{0.6,0,0.4}
\definecolor{grigio}{rgb}{0.5,0.5,0.5}
\def\sideremark#1{\ifvmode\leavevmode\fi\vadjust{\vbox to0pt{\vss
 \hbox to 0pt{\hskip\hsize\hskip1em
 \vbox{\hsize2.1cm\tiny\raggedright\pretolerance10000
  \noindent #1\hfill}\hss}\vbox to15pt{\vfil}\vss}}}%
\author[A. Pistoia]{Angela Pistoia}
\address{Angela Pistoia \newline \indent Dipartimento di Metodi e Modelli Matematici, Universit\`a di Roma ``La Sapienza''\newline \indent Via Antonio Scarpa 16, 00161 Roma (Italy)}
\email{angela.pistoia@uniroma1.it}
\author[N. Soave]{Nicola Soave}
\address{Nicola Soave \newline \indent
Dipartimento di Matematica,  Politecnico di Milano,  \newline \indent
Via Edoardo Bonardi 9, 20133 Milano (Italy)}
\email{nicola.soave@gmail.com; nicola.soave@polimi.it}
\author[H. Tavares]{Hugo Tavares}
\address{Hugo Tavares \newline \indent CMAFcIO \& Departamento de Matem\'atica\newline \indent Faculdade de Ci\^encias da Universidade de Lisboa \newline \indent Edif\'icio C6, Piso 1, Campo Grande 1749-016  Lisboa (Portugal)}
\email{hrtavares@ciencias.ulisboa.pt}
\thanks{\textit{Acknowlegments.}  A. Pistoia is partially supported by Sapienza research grant ``Nonlinear PDE's in geometry
and physics''. N. Soave is partially supported by the PRIN-2015KB9WPT$\_$010 Grant: ``Variational methods, with applications to problems in mathematical physics and geometry''. H. Tavares is partially supported by the Portuguese government through FCT - Funda\c c\~ao para a Ci\^encia e a Tecnologia, I.P., both under the project PTDC/MAT-PUR/28686/2017 and through the grant UID/MAT/04561/2013. The second and third author are also supported by the ERC Advanced Grant 2013 n. 339958 ``Complex Patterns for Strongly Interacting Dynamical Systems - COMPAT''}
\title[A fountain of positive Bubbles on a Coron's Problem for Gradient Systems]{A fountain of positive Bubbles on a Coron's Problem for a Competitive Weakly Coupled Gradient System}
\subjclass[2010]{35B09; 	35B33; 	35B44;  35J20; 35J50}
\keywords{Competititive systems, Concentration Phenomena, Coron's Problem, Critical Exponent, Elliptic Systems, Fountain of Bubbles, Ljapunov-Schmidt Reduction, Positive solutions, Weakly coupled gradient systems}
\date{\today}
\begin{document}
\maketitle

\begin{abstract}
We consider the following critical elliptic system:
\begin{equation*}
\begin{cases}
-\Delta u_i=\mu_i u_i^{3}+\beta u_i^{ } \sum\limits_{j\neq i} u_j^{2} \quad  \hbox{in}\ \Omega_\eps \\
u_i=0 \hbox{ on } \partial\Omega_\eps , \qquad  u_i>0  \hbox{ in } \Omega_\eps 
\end{cases}\qquad i=1,\ldots, m,
\end{equation*}
in a domain $\Omega_\eps \subset \R^4$ with a small shrinking hole $B_\eps(\xi_0)$. For $\mu_i>0$, $\beta<0$, and $\eps>0$ small, we prove the existence of a non-synchronized solution which looks like a fountain of positive bubbles, i.e.   each component $u_i$ exhibits a towering blow-up around $\xi_0$ as $\eps \to 0$. The proof is based on the Ljapunov-Schmidt reduction method, and the velocity of concentration of each layer within a given tower is chosen in such a way that the interaction between bubbles of different components balance the interaction of the first bubble of each component with the boundary of the domain, and in addition is dominant  when compared with the interaction of two consecutive bubbles of the same component. 
\end{abstract}

\section{Introduction}

This paper deals with the existence of   solutions to the elliptic critical system
\begin{equation}\label{s}
\begin{cases}
-\Delta u_i=\mu_i u_i^{p}+\beta u_i^{ p-1\over 2} \sum\limits_{j\neq i} u_j^{p+1\over2} \quad  \hbox{in}\ \Omega \\
u_i=0 \hbox{ on } \partial\Omega , \qquad  u_i>0  \hbox{ in } \Omega 
\end{cases}\qquad i=1,\ldots, m,
\end{equation}
when $\Omega$ is a bounded smooth domain in $\mathbb R^N$, and $p=\frac{N+2}{N-2}=2^*-1$, with $2^*$ critical Sobolev exponent. Thinking at $u_i$ as a density function (which is natural since \eqref{s} is studied in connection with problems in nonlinear optics and Bose-Einstein condensation), the sign of the real parameters $\mu_i$ describes the self-interaction between particles of the same density $u_i$, and will always be positive: that is, we have attractive self-interaction. On the contrary, the coupling parameter $\beta$, which describes the interaction between particles of different densities, will always be negative: that is, we have repulsive mutual interaction. 

The system \eqref{s} has the trivial solution, i.e. all the components $u_i$ vanish. It can also have a semi-trivial solution, i.e. only $\ell<m$ components vanish. It is clear that in this case \eqref{s} reduces to a system with  $m-\ell$ nontrivial components, so we are naturally lead to find \emph{fully nontrivial solutions}, namely solutions where all the components are nontrivial. In fact, we will be concerned with \emph{positive solutions}, namely fully nontrivial solutions with $u_i>0$ for every $i$. 

It is useful to point out that \eqref{s} can have solutions with synchronized components, i.e. all the components satisfy $u_i=s_i u$ for some $s_i\in\mathbb R$ and $u$ solves the single equation
\begin{equation}
\label{e}
-\Delta u = u ^{p} \ \hbox{in}\ \Omega,\ u=0\
\hbox{on}\ \partial\Omega,\ u>0\ \hbox{in}\ \Omega.
\end{equation}
For instance, if the number of components is $m=2$, the space dimension is $N=4$ (so that $p=3$), and 
\[
- \sqrt{\mu_1 \mu_2} < \beta< \min\{\mu_1, \mu_2\} \quad \text{or} \quad \beta> \max\{\mu_1,\mu_2\},
\]
then a solution of \eqref{e} gives rise to a synchronized solution. In this way, results available for the single equation can be translated in terms of \eqref{s}: for instance, if $\Omega$ has nontrivial $\mathbb Z_2-$homology, then the celebrated Bahri-Coron's result \cite{BaCo} claims the existence of a positive solution for \eqref{e}, and in turn this gives existence of a synchronized solution for \eqref{s}. It is worthwile to recall also the Coron's result \cite{c}, where the case of a domain with a small hole has been considered, namely $\Omega$ is replaced by $\Omega_\epsilon:=\Omega\setminus \overline{B_\epsilon(\xi_0)}$, and problem \eqref{e} has a solution which blows-up at $\xi_0$ as $\epsilon\to0 $ (see also \cite{l,Rey_fr}). Again, this family of solutions can be used to construct an associated family of synchronized solutions for \eqref{s}.

The assumptions on the domain are natural, since, exactly as in the scalar case, a Pohozaev-type identity shows that there is no solution if $\Omega$ is starshaped (see for instance \cite[p. 519]{ChenZou1} or \cite{ChenZou2}). 

The above discussion induced the first two authors to investigate the following problem: does \eqref{s} have non-synchronized solutions? An affirmative answer is given in \cite{PistoiaSoave}, where \eqref{s} is posed in a domain $\Omega_\epsilon\subset \R^N$, with $N=3,4$, having $\kappa$ distinct holes; that is, $\Omega_\epsilon:=\Omega\setminus \cup_{i=1}^\kappa \overline{B_\eps(\xi_i)}$, with $2\le \kappa\le m$; for a quite general choice of interaction terms $\beta_{ij}$ (which can be both of cooperative type, and of competitive type), Pistoia and Soave proved existence and concentration results of solutions whose components are splitted in several groups $G_1, . . . , G_\kappa$, in such a way that each component within a given group $G_i$ concentrates around a point $\xi_i$ in a somehow synchronized fashion (in the sense that the velocity of concentration of different components belonging the same group is the same), while the different groups concentrate around different points. In particular, the main results in \cite{PistoiaSoave} regard the case when at least two components concentrate around different points, and hence cannot be synchronized.

In view of the above discussion, it is natural to ask the following question:
 {\it if the domain has only one small hole, is it still possible to find a non-synchronized solution?}
The main purpose of this paper is to give a positive answer for $\beta<0$ and $N=4$ - so that $p=3$ (for a discussion of the cases $N=3$ or other dimensions, see Remark \ref{rem:dimension} below). More precisely, we take
 \begin{equation}\label{eq:assOmega}
 \Omega\subset \R^4\text{ bounded domain, symmetric with respect to one of its points $\xi_0\in \Omega$,}
 \end{equation}
i.e. $x\in\Omega$ if and only if $2\xi_0-x\in\Omega$, and consider the following elliptic problem with $m\in \N$ equations:
\begin{equation}\label{eq:gensystem}
\begin{cases}
-\Delta u_i=\mu_i u_i^{3}+\beta u_i \sum\limits_{j\neq i} u_j^{2}\ \hbox{in}\ \Omega_\epsilon\\
u_i=0 \hbox{ on } \partial\Omega_\epsilon, \quad u_i>0  \hbox{ in } \Omega_\eps
\end{cases}\qquad i=1,\ldots, m,
\end{equation}
where $\Omega_\eps$ is a domain with one hole, $\Omega_\eps:=\Omega\setminus\overline{B_\eps(\xi_0)}\subset \R^4$,  and $B_\eps(\xi_0)$ denotes the open ball of $\R^4$ centered at $\xi_0$ with radius $\eps$. Throughout this paper we take $\mu_i>0$, the so called focusing case, and $\beta<0$, which means that the coupling terms in \eqref{eq:gensystem} are of competitive type. 
 
We find solutions of \eqref{eq:gensystem} which look like a fountain of bubbles, namely their components  are a superposition of bubbles centered at $\xi_0$ with different rates of concentration. In particular,  all the components have a towering blow-up point at $\xi_0.$ This new phenomena is quite surprising, since it is in sharp contrast with the case of the single equation for which positive solutions cannot have neither  clustering or  towering  blow-up points,  i.e.  at every blow-up point there is at most one bubble concentrating there  (see   Schoen \cite{s}). We also mention that it is somehow unexpected that in a competitive regime (with a possibly large $|\beta|$) we find solutions whose components concentrate at the same point; this is only possible because the concentration rates are different, and in particular such solutions are not synchronized.

In order to state our results we need to introduce some notations. We  define 
\begin{equation}\label{eq:bubble}
U_{\delta, \xi}= \alpha_4 \frac{\delta}{\delta^2+|x-\xi|^2},\ \delta>0,\ x,\xi\in\mathbb R^N
\end{equation}
 (a \emph{bubble})
  with $\alpha_4=2\sqrt{2}$: these functions are all the positive solutions of the problem
 $$
-\Delta U=U^3, \qquad U\in \mathcal{D}^{1,2}(\R^4).
$$
 (see \cite{Aubin,CaGiSp,Talenti}). Also, we denote by $P_\eps:\mathcal{D}^{1,2}(\R^4)\to H^1_0(\Omega_\eps)$ the projection map and we define the projection of the bubble defined in \eqref{eq:bubble} as $W:=P_\eps U_{\delta,\xi}\in H^1_0(\Omega_\eps)$, which is the unique solution of
\begin{equation}\label{eq:Projection}
-\Delta W=-\Delta U_{\delta,\xi}=U_{\delta,\xi}^3 \text{ in } \Omega_\eps,\qquad W=0 \text{ on } \partial \Omega_\eps.
\end{equation}

Take $k\in \N$ (the total number of bubbles) larger than or equal to $m$ . Consider $I_1,\ldots, I_m\subset \{1,\ldots, k\}$ satisfying the  following properties:
\begin{enumerate}
\item $1\in I_1$;
\item $I_i\neq \emptyset$ for every $i=1,\ldots, m$;
\item $I_i\cap I_j=\emptyset$ whenever $i\neq j$;
\item $I_1\cup \ldots \cup I_m=\{1,\ldots, k\}$;
\item for every $j\in \{1,\ldots, k\}$ and $i\in \{1,\ldots, m\}$, if $j\in I_i$ then $j-1,j+1\not\in I_i$.
\end{enumerate}
Observe that one considers condition (1) without loss of generality, simply to fix ideas and simplify some statements. Conditions (2)-(3)-(4) imply that $I_1,\ldots, I_m$ form a partition of $\{1,\ldots, k\}$, while condition (5) means that each set $I_i$ does not contain two consecutive integers. Our main result is the following

\begin{theorem}\label{thm:main}
Take $\Omega$ satisfying \eqref{eq:assOmega} and let $\mu_i>0$, $\beta<0$. For any integer $k\geq m$ and for every partition $I_1,\ldots, I_m$ of $\{1,\ldots, k\}$ satisfying (1)--(5), there exists $\epsilon_0>0$ such that for any $\epsilon\in(0,\epsilon_0)$ problem \eqref{eq:gensystem} has a solution (symmetric with respect to $\xi_0$) of the form
$$
u_{i,\eps}=\mu_i^{-\frac{1}{p-1}}\sum\limits_{j\in I_i}P_{\eps}U_{\delta_j^\eps,\xi_0}+\phi_i^\eps,\quad i=1,\ldots, m
$$
with
\begin{equation}\label{eq:rates_thm}
\delta_j^\eps=d_j^\eps\, \epsilon^{j\over k+1}\(\log{1\over\epsilon}\)^{{1\over2}-{j\over k+1}}\text{ for some } d_j^\eps\to d_j^*,\quad \  j=1,\dots,k
\end{equation}
for
\[
d_j^*=\Gamma^\frac{i}{2(k+1)}\(A^2 \tau(0)\)^{\frac{i}{2(k+1)}-\frac{1}{2}}\(\frac{|\beta| \alpha_4^4 |\mathbb{S}^3|}{k+1}\)^{\frac{1}{2}-\frac{i}{k+1}}
\]
(see the upcoming \eqref{eq:constantsAB} and \eqref{eq:constantsAB2} for the expressions of the constants $A,\Gamma$) and
\[
\|\phi_i^\eps\|_{H^1_0(\Omega_\eps)}\to 0 \text{ as } \eps\to0,\quad i=1,\ldots, m.
\]
\end{theorem}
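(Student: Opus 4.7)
The plan is to apply the Ljapunov-Schmidt reduction method. I work in the subspace of $(H^1_0(\Omega_\eps))^m$ of functions symmetric with respect to $\xi_0$, and look for solutions of the form
\[
u_{i,\eps} = \mu_i^{-1/2} \sum_{j \in I_i} P_\eps U_{\delta_j^\eps, \xi_0} + \phi_i^\eps, \qquad i=1,\ldots,m,
\]
with concentration rates $\delta_j^\eps = d_j\,\eps^{j/(k+1)}(\log 1/\eps)^{1/2 - j/(k+1)}$ depending on parameters $d = (d_1, \ldots, d_k)$ in a compact subset of $(0,\infty)^k$. The scheme is: estimate the error of the ansatz in a suitable dual norm (say $L^{4/3}$); invert the linearization on the orthogonal complement of the approximate kernel to produce a remainder $\phib^\eps(d)$ by a contraction argument; and reduce the problem to finding a critical point of a finite-dimensional functional $J_\eps(d)$.

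The error of the ansatz has three families of contributions to identify. First, the projection correction $P_\eps U_{\delta_j^\eps,\xi_0} - U_{\delta_j^\eps,\xi_0}$, which near $\xi_0$ is controlled by the Robin-type function of $\Omega_\eps$: since $\Omega_\eps$ is obtained by removing $B_\eps(\xi_0)$, this function admits an explicit asymptotic expansion in $\eps$ supplying the Coron contribution from the boundary of the small hole. Second, the within-tower interactions between $U_{\delta_j^\eps,\xi_0}$ and $U_{\delta_{j'}^\eps,\xi_0}$ with $j,j'\in I_i$ (necessarily $|j-j'|\geq 2$ by condition (5)), produced by the nonlinearity $\mu_i u_i^3$. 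Third, the cross-tower interactions produced by the coupling $\beta u_i \sum_{j\neq i} u_j^2$ between pairs of consecutive bubbles belonging to different components. The ansatz \eqref{eq:rates_thm} is calibrated precisely so that the projection contribution of the innermost bubble and the cross-tower interactions of consecutive bubbles sit at the same leading order in $\eps$, while the within-tower interactions are strictly of smaller order.

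Next, one inverts the linearized operator around the ansatz on the orthogonal complement of the approximate kernel. Thanks to the symmetry constraint, translations and rotations are killed, so the approximate kernel is generated only by the $k$ dilation modes $\partial_{\delta} P_\eps U_{\delta,\xi_0}\big|_{\delta=\delta_j^\eps}$. Using non-degeneracy of the single bubble in $\mathcal{D}^{1,2}(\R^4)$, the well-separated scales within each tower, and the fact that the cross-coupling $\beta u_i u_j^2$ contributes only lower-order off-diagonal perturbations of the linearization, one obtains uniform invertibility on the orthogonal complement with a mildly $\eps$-degenerating norm. A standard contraction-mapping argument in a small ball of the symmetric subspace of $(H^1_0(\Omega_\eps))^m$ then gives a unique remainder $\phib^\eps(d) = (\phi_1^\eps, \ldots, \phi_m^\eps)$ depending smoothly on $d$.

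The final step is to find a critical point of the reduced energy $J_\eps(d)$ obtained by evaluating the system's energy on the corrected ansatz. At leading order, $J_\eps(d) - c_0$ is a sum of a positive boundary/Robin contribution (involving $d_1$, $\tau(0)$, and the constant $A$) and negative cross-tower interaction contributions (involving the ratios $\delta_j^\eps/\delta_{j+1}^\eps$ weighted by $|\beta|\alpha_4^4 |\mathbb{S}^3|$), all gathered at the same logarithmic power of $\eps$. The stationarity equations for this leading-order functional form a decoupled chain in the variables $d_j$ that can be solved explicitly, yielding precisely the formula stated for $d_j^*$. Non-degeneracy of this critical point (a computable algebraic condition on the Hessian) then allows one to lift it, by the implicit function theorem, to a true critical point $d^\eps \to d^*$ of the full reduced functional. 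The main obstacle I foresee is the precise bookkeeping of all the interaction terms: one has to check that the unusual scaling $\delta_j^\eps \sim \eps^{j/(k+1)}(\log 1/\eps)^{1/2 - j/(k+1)}$ collects boundary, cross-tower and within-tower contributions at the right orders, and that cross-interactions between non-consecutive bubbles of different components do not produce hidden leading-order terms that spoil the decoupled structure of the reduced system. The competitive sign $\beta<0$ is decisive, since it is what makes the tower interactions enter $J_\eps$ with the sign needed to balance the positive boundary term and produce a genuine interior critical point.
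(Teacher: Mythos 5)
Your overall strategy coincides with the paper's: the same ansatz with towers indexed by the partition $I_1,\ldots,I_m$, the same choice of concentration rates $\delta_j^\eps$, the same symmetric subspace of $H^1_0(\Omega_\eps)$, inversion of the linearization on the orthogonal complement of the dilation modes, contraction to solve for $\phib^\eps(\mathbf{d})$, and reduction to a finite-dimensional variational problem. Two points, however, deserve attention.

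First, in the very last step the paper does \emph{not} lift a non-degenerate critical point by the implicit function theorem. It shows that the leading part of the reduced energy, $\Psi(\mathbf{x})=a_1 x_1 + a_2/x_k + a_3 \sum_{i=1}^{k-1} x_{i+1}/x_i$ with all $a_i>0$, is coercive on the positive orthant and has a unique global minimum $\mathbf{x}^*$ given by an explicit chain of relations; then one fixes a compact neighborhood $K\Subset X_\eta$ of $\mathbf{x}^*$ with $\min_K \Psi<\min_{\partial K}\Psi$ and observes that the true reduced energy, being a uniform $C^0$ perturbation, must have an interior minimizer in $K$. This avoids computing the Hessian of $\Psi$ altogether; your route via the implicit function theorem is viable in principle, but it requires the extra (and not entirely trivial) verification that the Hessian of $\Psi$ at $\mathbf{x}^*$ is invertible, together with $C^1$ rather than $C^0$ closeness of $\widetilde J_\eps$ to the leading-order profile. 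The minimization argument is both cheaper and more robust.

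Second, your description of the sign structure of the reduced energy is incorrect and this should be corrected. The coupling enters the energy as $-\tfrac{\beta}{2}\int_{\Omega_\eps} u_i^2 u_j^2$; since $\beta<0$, this contributes \emph{positively}, so the cross-tower interaction terms in the leading order functional are positive, not negative (indeed $a_3=-\tfrac{\beta}{2}\tfrac{\alpha_4^4}{k+1}|\mathbb{S}^3|>0$). The balance is therefore not a competition between a positive boundary term and a negative interaction term, but rather a coercivity phenomenon: the three positive families of terms (the Robin term $\sim \tau(0)\,d_1^2$ from the outer boundary, the Coron term $\sim \Gamma\,d_k^{-2}$ from the hole, and the consecutive-scale interaction terms $\sim |\beta|\,(d_{i+1}/d_i)^2$) each prevent escape of the parameters in one direction and together force an interior minimum. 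Note also that you dropped the $\Gamma/(2d_k^2)$ Coron contribution from the hole when describing the reduced functional; without it, $\Psi$ is not coercive as $x_k\to 0$ and the minimization argument would fail. It is precisely the sign $\beta<0$ that makes the interaction contribution positive and $\Psi$ coercive; for $\beta>0$ the same ansatz would produce a reduced energy unbounded below and the argument would break down, but not for the reason you state.
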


\begin{remark} As stated in the theorem, each component of the solution, $u_{i,\eps}$, belongs to the space
\[
H_{\eps,\xi_0}=\{u\in H^1_0(\Omega_\eps):\ u(x)=u(2\xi_0-x)\ \forall x\in \Omega_\eps\}.
\]
Since also $U_{\delta,\xi}$ is symmetry with respect to $\xi_0$, then $P_\eps U_{\delta_j^\eps,\xi_0}\in H_{\eps,\xi_0}$, as well as the remainder terms $\phi_i^\eps$.
\end{remark}

In order to better explain our result, let us take a particular case of \eqref{eq:gensystem} and Theorem \ref{thm:main}: 
\begin{equation}\label{eq:particularcase1}
m=2,\qquad k\geq 2,\qquad \text{ and } N=4\quad \text{(so that $p=3$)}.
\end{equation}
and the following partition of $\{1,\ldots,k\}$:
\begin{equation}\label{eq:particularcase2}
I_1=\{\text{odd numbers between $1$ and $k$}\},\ I_2=\{\text{even numbers between $1$ and $k$}\}.
\end{equation}
Clearly, $I_1,I_2$ satisfies conditions (1)--(5), and it is actually the only admissible partition for $m=2$.
Problem \eqref{eq:gensystem} now reads as

\begin{equation}\label{1}
\begin{cases}
-\Delta u_1=\mu_1u_1^{3}+\beta u_1u_2^2 \ \hbox{in}\ \Omega_\epsilon,\\
-\Delta u_2=\mu_2u_2^{3} +\beta u_1^2u_2 \ \hbox{in}\ \Omega_\epsilon\\
u_1=u_2=0\  \hbox{on}\ \partial\Omega_\eps, \quad u_1,u_2>0  \hbox{ in } \Omega_\eps.\\
\end{cases}
\end{equation}
In this particular situation, Theorem \ref{thm:main}  can be stated in the following way.
\begin{theorem}\label{thm:main2}
Take $\Omega$ satisfying \eqref{eq:assOmega} and  let $\mu_1,\mu_2>0$, $\beta<0.$ For any integer $k\ge2$, let $I_1,I_2$ be respectively the set of all odd and even numbers between 1 and k, as in \eqref{eq:particularcase2}. Then there exists $\epsilon_0>0$ such that for any $\epsilon\in(0,\epsilon_0)$ problem \eqref{1} has a solution (symmetric with respect to $\xi_0$) of the form
$$
u_{1,\eps}=\mu_1^{-\frac{1}{2}}\sum\limits_{j\in I_1} P_{\eps}U_{\delta_j^\eps,\xi_0}+\phi_1^\eps \quad \text{and}\quad
u_{2,\eps}=\mu_2^{-\frac{1}{2}}\sum\limits_{j\in I_2}P_{\eps}U_{\delta_j^\eps,\xi_0} +\phi_2^\eps
$$
with
\begin{equation}\label{eq:rates_thm}
\delta_j^\eps=d_j^\eps\, \epsilon^{j\over k+1}\(\log{1\over\epsilon}\)^{{1\over2}-{j\over k+1}}\text{ for some } d_j^\eps\to d_j^*,\quad \  j=1,\dots,k
\end{equation}
for
\[
d_j^*=\Gamma^\frac{j}{2(k+1)}\(A^2 \tau(0)\)^{\frac{j}{2(k+1)}-\frac{1}{2}}\(\frac{|\beta| \alpha_4^4 |\mathbb{S}^3|}{k+1}\)^{\frac{1}{2}-\frac{j}{k+1}}
\]
and
\[
\|\phi_1^\eps\|_{H^1_0(\Omega_\eps)}\to 0,\  \|\phi_2^\eps\|_{H^1_0(\Omega_\eps)} \to 0\quad \text{ as } \eps\to0.
\]
\end{theorem}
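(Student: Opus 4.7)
The plan is to apply a Lyapunov--Schmidt reduction inside the symmetric space
\[
H_{\eps,\xi_0}=\{u\in H^1_0(\Omega_\eps): u(x)=u(2\xi_0-x)\ \forall x\in\Omega_\eps\}
\]
to the variational formulation of \eqref{1}. First I would fix the ansatz
\[
\mathcal{W}_{1,\eps}[\mathbf{d}]:=\mu_1^{-1/2}\sum_{j\in I_1} P_\eps U_{\delta_j,\xi_0},\qquad \mathcal{W}_{2,\eps}[\mathbf{d}]:=\mu_2^{-1/2}\sum_{j\in I_2} P_\eps U_{\delta_j,\xi_0},
\]
where $\mathbf{d}=(d_1,\ldots,d_k)$ varies in a compact subset of $(0,\infty)^k$ and
\[
\delta_j=d_j\,\eps^{j/(k+1)}\bigl(\log(1/\eps)\bigr)^{1/2-j/(k+1)}.
\]
The goal is, for $\mathbf{d}$ close to the predicted $\mathbf{d}^*$, to find a pair $(\phi_{1,\eps},\phi_{2,\eps})$ in the $H^1_0$-orthogonal complement of the finite-dimensional space
\[
K_\eps:=\mathrm{span}\Bigl\{\partial_{\delta_j} P_\eps U_{\delta_j,\xi_0}\,:\,j=1,\ldots,k\Bigr\}
\]
such that $(\mathcal{W}_{i,\eps}+\phi_{i,\eps})_{i=1,2}$ solves \eqref{1}. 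Working inside $H_{\eps,\xi_0}$ kills the translation kernel generated by the $\xi$-derivatives, so only the $k$ dilation directions survive.

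The first step is the solution of the projection onto $K_\eps^\perp$. Rewriting \eqref{1} as $\mathcal{L}_\eps[\phi]=\mathcal{N}_\eps[\phi]+\mathcal{E}_\eps$, with $\mathcal{L}_\eps$ the linearization of the diagonal uncoupled problem at the tower $\mathcal{W}_{i,\eps}$, $\mathcal{E}_\eps$ the ansatz error, and $\mathcal{N}_\eps$ collecting higher-order and coupling terms, a standard contraction mapping argument yields a unique small $\phi_\eps$ depending smoothly on $\mathbf{d}$. The core estimate is the invertibility of $\mathcal{L}_\eps$ on $K_\eps^\perp$ with uniform bounds: it is proved by contradiction, showing that a normalized sequence of would-be kernel elements must concentrate, after suitable rescaling, on a single standard bubble $U_{1,0}$ and contradict its nondegeneracy. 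The strong separation $\delta_j/\delta_{j+1}\to 0$ guarantees that mutual interactions of distinct bubbles and the cross-coupling enter only as small perturbations.

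The second step is the finite-dimensional reduction. One expands the reduced energy
\[
J_\eps(\mathbf{d}):=I_\eps\bigl(\mathcal{W}_{1,\eps}+\phi_{1,\eps},\,\mathcal{W}_{2,\eps}+\phi_{2,\eps}\bigr)
\]
and tracks three classes of corrections. First, the Coron--Rey hole term $A^2\tau(0)\delta_1^2$, which arises from the largest bubble together with the sharp expansion of $P_\eps U_{\delta_1,\xi_0}-U_{\delta_1,\xi_0}$ near $\partial B_\eps(\xi_0)$. Second, same-species interactions between consecutive indices $j,j+2\in I_i$, of order $(\delta_j/\delta_{j+2})^2\log(\delta_{j+2}/\delta_j)$. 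Third, the competitive cross-species couplings, of order $|\beta|\alpha_4^4|\mathbb{S}^3|(k+1)^{-1}(\delta_j/\delta_{j+1})^2\log(\delta_{j+1}/\delta_j)$, which come from $\int_{\Omega_\eps} U_{\delta_j,\xi_0}^2 U_{\delta_{j+1},\xi_0}^2$ after the natural dilation in $\R^4$. The rates in \eqref{eq:rates_thm} are designed precisely so that $A^2\tau(0)\delta_1^2$ balances the first cross-species interaction, each successive cross term balances the next, and every same-species contribution is strictly of lower order. Setting the leading order of $\partial_{d_j}J_\eps$ to zero yields an upper-triangular algebraic system whose unique nondegenerate solution is $d_j^*$; the implicit function theorem then produces a critical point $\mathbf{d}^\eps\to\mathbf{d}^*$ of $J_\eps$ for $\eps$ small.

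The hardest step will be the sharp asymptotic expansion and the verification of the scale hierarchy. One has to compute, with explicit leading constants and controlled remainders, the interaction integrals $\int U_{\delta_j,\xi_0}^2 U_{\delta_\ell,\xi_0}^2$ and $\int U_{\delta_j,\xi_0}^3 U_{\delta_\ell,\xi_0}$ together with the hole integral $\int U_{\delta_j,\xi_0}(P_\eps U_{\delta_j,\xi_0}-U_{\delta_j,\xi_0})$, so as to read off the constants $A$, $\Gamma$, $\tau(0)$ entering $d_j^*$ and to confirm that in the regime \eqref{eq:rates_thm} the cross-species competitive terms really dominate the same-species ones. Once these asymptotics are in place and the reduced system is checked to be nondegenerate at $\mathbf{d}^*$, Theorem \ref{thm:main2} follows at once, being the specialization to $m=2$ of Theorem \ref{thm:main}.
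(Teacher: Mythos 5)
Your overall scheme (Ljapunov--Schmidt in the symmetric space, towers at nested rates, reduced energy balancing the Robin term against the cross-species interactions) is the same as the paper's. But the specific decomposition you propose for the auxiliary equation has a genuine gap, and the claim that ``the cross-coupling enters only as a small perturbation'' is not accurate.

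You define $\mathcal{L}_\eps$ as ``the linearization of the diagonal uncoupled problem'' and put the coupling into $\mathcal{N}_\eps$. The coupling, however, contains terms that are \emph{linear} in $\phi$ but \emph{not small} in operator norm, e.g.
\[
\phi_1 \ \longmapsto\ \Ical^*\Bigl[\beta\bigl(\textstyle\sum_{j\in I_2}P_\eps U_j\bigr)^2\phi_1\Bigr]
\quad\text{and}\quad
\phi_2 \ \longmapsto\ \Ical^*\Bigl[2\beta\bigl(\textstyle\sum_{j\in I_1}P_\eps U_j\bigr)\bigl(\textstyle\sum_{j\in I_2}P_\eps U_j\bigr)\phi_2\Bigr].
\]
For the first one, $\bigl\|\beta(\sum_{j\in I_2} P_\eps U_j)^2\phi_1\bigr\|_{L^{4/3}} \le |\beta|\,\bigl\|\sum_j P_\eps U_j\bigr\|_{L^4}^2\,\|\phi_1\|_{L^4}$, and $\|\sum_j P_\eps U_j\|_{L^4}$ is bounded below away from zero (each bubble contributes $\sim \|U_{1,0}\|_{L^4}$), so the Lipschitz constant of this piece is of order $|\beta|$ and does \emph{not} tend to zero as $\eps\to 0$. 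Your contraction argument would therefore only close for $|\beta|$ small, whereas the theorem is for arbitrary $\beta<0$. Moreover, in the concentration/blow-up analysis of the linear operator, when one rescales a would-be kernel element at a scale $\delta_\ell$ with $\ell\notin I_1$, the diagonal term $3(\sum_{j\in I_1}P_\eps U_j)^2 z_1$ vanishes but the cross term $\beta(\sum_{j\in I_2}P_\eps U_j)^2 z_1$ survives and produces the limit equation $-\Delta\hat z=\beta U_{1,0}^2\hat z$; there the competitive sign $\beta<0$ is what forces $\hat z\equiv 0$. So the cross-coupling is in fact the \emph{leading} term at those scales, not a perturbation, and it must be inside the linear operator whose invertibility you prove. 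The paper does exactly this: $L^i_{\mf{d},\eps}$ carries all terms linear in $\phi$, including both $\beta$-couplings, and the contradiction argument handles the two cases $\ell\in I_i$ (nondegeneracy of $U_{1,0}$) and $\ell\notin I_i$ (sign of $\beta$).

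A secondary, more cosmetic issue: you orthogonalize both $\phi_1$ and $\phi_2$ against the full span $K_\eps=\mathrm{span}\{\partial_{\delta_j}P_\eps U_{\delta_j}: j=1,\dots,k\}$. This over-constrains the auxiliary problem and produces $2k$ reduced equations in only $k$ unknowns $d_1,\dots,d_k$. The natural (and the paper's) choice is to take $\phi_i\perp K_i:=\mathrm{span}\{P_\eps\psi_j: j\in I_i\}$, so that the bilinear form of the reduced system is square.

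The rest of the outline (expansion of the reduced energy, choice of rates so that $A^2\tau(0)\delta_1^2$, $\Gamma(\eps/\delta_k)^2$ and $|\beta|\alpha_4^4|\mathbb{S}^3|(k+1)^{-1}(\delta_{i+1}/\delta_i)^2\log(\delta_i/\delta_{i+1})$ balance, resolution of the resulting algebraic system for $d_j^*$) matches the paper.
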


In order to avoid insignificant technicalities that would make the presentation harder to follow, we will simply prove Theorem \ref{thm:main2}; in order to convince the reader that the proof of Theorem \ref{eq:gensystem} follows precisely in the same way we will make some remarks along the paper (see Remarks \ref{rem:section2}, \ref{rem:section3}, \ref{rem:remark_section_reduced} and \ref{rem:section_conclusion}).

Our result is inspired by the construction performed by Musso and Pistoia in \cite{MussoPistoiaJMPA2006} and Ge, Musso and Pistoia in \cite{GeMuPi}, where the authors built sign-changing solutions to Coron's problem  whose shape resembles a superposition of bubbles centered at the point $\xi_0$ with alternating sign and with different rate of concentration. The proof here also follows the same scheme which is based on a Ljapunov-Schmidt procedure: we find a good first order approximation term (see \eqref{def L}),  we perform a linear theory for the linearized system around the ansatz (see Proposition \ref{lem: linear part}), we  reduce the problem to a finite dimensional one (see Proposition \ref{prop:C^1phi}) and finally we study the reduced problem (see Section \ref{sec:reducedenergy}).   However, the main steps of  our  proof  require rather delicate and careful estimates, see for instance the estimates involving the interacting term in the study of the linear part in Subsection \ref{subsec:linear}, the asymptotic expansion of the interaction energy (Lemma \ref{lemma:interaction_estimate}), and the estimate of the remainder term in Lemma \ref{lemma:remainder}. Indeed, the interaction between bubbles of different components has to balance the interaction of the first bubble of each component with the boundary of the domain, and most of all  it has to be dominant  compared with the interaction of two consecutive bubbles of the same component. Actually, this is possible because of   the presence of an $|\log\eps|$-order term which turns out to be crucial in our construction (see estimate \eqref{eq:crucial_ln}).
 
 \begin{remark} We prove the existence of   solutions which look  like    fountains of positive bubble all centered at the point $\xi_0$ when $\Omega$ is symmetric with respect $\xi_0.$  It is clear that using the same arguments of Ge, Musso and Pistoia \cite{GeMuPi}   we can remove the symmetry assumption, just centering all the bubbles $U_{\delta_i,\xi_i}$ at   suitable points $\xi_i=\xi_i( \epsilon)$ which approach $ \xi_0$ with a suitable rate as $\epsilon\to0.$ 
\end{remark}

\begin{remark}
For the sake of completeness, we also mention some recent results concerning the existence of solutions to system \eqref{s} when $\Omega$ is the whole space $\mathbb R^N$.
As far as we know, all the results deal with systems with only two components.
Guo, Li
and Wei in \cite{GuLiWe} established
the existence of  infinitely many positive nonradial  solutions of \eqref{s},  only when $N=3,$ in the competitive case. Peng, Peng and Wang discussed in \cite{PePeWa} uniqueness of the least energy solution for $\beta>0$, and the non-degeneracy of the manifold of the synchronized positive solutions. Clapp and Pistoia in \cite{cp} proved  
that  system \eqref{s} in any dimension  has infinitely many fully nontrivial solutions, which are not conformally equivalent.
Gladiali, Grossi and Troestler in \cite{ggt1,ggt2} obtained radial and nonradial solutions to some critical systems like \eqref{s}
using bifurcation methods. 
\end{remark}

\begin{remark} A Brezis-Nirenberg type problem has been studied for systems, see for instance \cite{ChenZou1,ChenZou2,ChenLinZouCPDE2014} for existence results, while for concentration and blow-up type results see \cite{ChenLin,PistoiaTavares}.
\end{remark}

\begin{remark}
 As already mentioned, appropriate assumptions on $\beta$ allows to obtain a synchronized solution to \eqref{s} if $\Omega$ has nontrivial $\mathbb Z_2-$homology. We conjecture that system \eqref{s} has at least one (actually we would say infinitely many) positive non-synchronized solution if $\Omega$ has nontrivial $\mathbb Z_2-$homology (as in Bahri-Coron's result for the single equation \eqref{e}) and $\beta<0$ is arbitrary. A  first attempt in this direction is due to 
 Clapp and Faya \cite{cf}, who establish the existence of a prescribed number of fully nontrivial
solutions to the system with only two components under suitable symmetry assumptions on the topologically nontrival domain $\Omega.$

We would like to remark that the difficulty in finding positive solutions to system \eqref{s}, even with only two components, is similar to the difficulty in finding sign-changing solutions for the single equation \eqref{e}.
One key point is the blow-up analysis of solutions: in the case of positive solutions the blow-up, whenever it occurs, is isolated and simple, while in the case of sign-changing solution   multiple bubbling
naturally appears.
\end{remark}

Without loss of generality, we will work from now on with 
\begin{equation}\label{eq:assumption_wlog}
\mu_1=\mu_2=1,\quad \text{ and take }\quad \xi_0=0\in \Omega,
\end{equation} assuming that $\Omega$ is symmetric with respect to the origin. Observe that we are conduced to such situation by eventually replacing $u_i$ with $\mu_i^{-\frac{1}{2}}u_i(x+\xi_0)$.

\begin{remark}Solutions of \eqref{eq:gensystem} correspond to critical points with nontrivial components of the $C^1$--energy functional $J_\eps:H^1_0(\Omega;\R^m)\to \R$ defined by
\[
J_\eps(u_1,\ldots, u_m)=\sum_{i=1}^{m}\int_{\Omega_\eps} \left(\frac{|\nabla u_i|^2}{2}-\frac{\mu_i(u_i^+)^{p+1}}{p+1}\right)-\frac{2\beta }{p+1}\mathop{\sum_{i,j=1}}_{i<j}^m\int_{\Omega_{\eps}} |u_i|^\frac{p+1}{2}|u_j|^\frac{p+1}{2}.
\]
Indeed, if $(u_1,\ldots, u_m)$ is a critical point of $J_\eps$, then it satisfies
\[
-\Delta u_i=\mu_i(u_i^+)^p+\beta \sum_{j\neq i} u_i |u_i|^\frac{p-3}{2}|u_j|^\frac{p+1}{2},\quad i=1,\ldots, m.
\]
Multiplying this equation by $u_i^-$ and integrating by parts yields (since $\beta<0$)
\[
0\geq -\int_{\Omega_\eps} |\nabla u_i^-|^2=-\beta \sum_{j\neq i}\int_{\Omega_\eps} |u_i^-|^\frac{p+1}{2}|u_j|^\frac{p+1}{2}\geq 0.
\]
If $u_i\not\equiv 0$, then by the maximum principle we deduce that $u_i>0$.
\end{remark}

\begin{remark}\label{rem:dimension} The Sobolev critical exponent is defined only for $N\geq 3$. On the other hand, for $p$ defined as before, the right hand sides of \eqref{s} are $C^1$ nonlinearities if and only if we have $\frac{p-1}{2}\geq 1$, if and only if $N\leq 4$. Therefore, it is reasonable to work in dimension $N=3$ or $N=4$. Here we chose to deal with the case $N=4$ only since it requires less technicalities: all the exponents are positive integers, which makes some expansions explicit.  Using Taylor expansions we could have takled the case $N=3$. We conjecture that in this case the main results (and in particular the rates) would be the same.
\end{remark}

\begin{remark} A similar approach could also be used to find solutions for   critical systems
in pierced domains when the interaction term is  more in general  like (e.g. Lotka-Volterra systems)
\begin{equation} 
\begin{cases}
-\Delta u_i=\mu_i u_i^{p}+\beta_i u_i^{q_i } \sum\limits_{j\neq i} u_j^{q_j }\ \hbox{in}\ \Omega_\eps \\
u_i=0 \hbox{ on } \partial\Omega_\eps , \quad u_i>0  \hbox{ in } \Omega _\eps
\end{cases}\qquad i=1,\ldots, m,
\end{equation}
when $\mu_i>0,$ $\beta_i<0$ and $q_i,q_j>1.$ In the non-variational cases, one has to replace the asymptotic estimates on the energy of Section \ref{sec:reducedenergy} with an argument that simply uses the system like in \cite[Section 2]{MussoPistoiaIndiana2002}.  
\end{remark}

\subsection*{Notations} 
Working with dimension $N=4$, we deal with the following bubbles concentrated at the origin
\[
U_{\delta,0}(x)=\alpha_4\frac{\delta}{\delta^2+|x|^2}
\]
(where $\alpha_4=2\sqrt{2}$), which we denote also by $U_{\delta}$; in many cases we deal with different concentration parameters $\delta_i$, $i=1,\dots,k$, and we shall simply write $U_{\delta_i}=U_i$. These correspond to all positive solutions of $-\Delta U=U^3$ in $\R^4$ which are symmetric with respect to the origin. It is well known (see \cite{BianchiEgnell}) that the space of solutions of the linearized equation
\begin{equation}\label{eq:linearized_eq}
-\Delta V=3U_{\delta}^{2}V
\end{equation}
has dimension $4+1=5$ in $\mathcal{D}^{1,2}(\R^5)$, being spanned by 
\[
\frac{\partial U_{\delta}}{\partial \delta}(x)= \alpha_4\frac{|x|^2-\delta^2}{(\delta^2+|x|^2)^2},\quad \frac{\partial U_{\delta}}{\partial \xi_i}(x)=2\alpha_4\frac{\delta x_i}{(\delta^2+|x|^2)^2},\ i=1,\ldots, 4.
\] Therefore, the space of solutions to \eqref{eq:linearized_eq} which belong to 
\[
\mathcal{D}^{1,2}_s(\R^4):=\{\psi\in \mathcal{D}^{1,2}(\R^4):\ \psi(-x)=\psi(x)\ \forall x\in \R^4\}
\]
has dimension 1, being spanned by $\frac{\partial U_\delta}{\partial \delta}$. For future convenience, we observe that
\begin{equation}\label{pa U con U}
\left|\frac{\partial U_\delta}{\partial \delta}(x)\right|  \le  \frac{U_\delta(x)}{\delta}.
\end{equation}

We take the following inner product and norm in $H^1_0(\Omega_\eps)$:
\[
\langle u,v\rangle_{H^1_0}:=\int_{\Omega_\eps} \nabla u\cdot \nabla v,\qquad \|u\|_{H^1_0}^2=\int_{\Omega_\eps} |\nabla u|^2
\]
and the standard $L^p$ norm by $\|\cdot \|_p$ (we omit the dependence on $\eps$ for simplicity).

The Green function of the Laplace operator in $\Omega$ with Dirichlet boundary conditions is denoted by $G(x,y)$, and can be decomposed as
$$
G(x,y)=\frac{\gamma_4}{|x-y|^2}-H(x,y),
$$
where $\gamma_4:=(2|\partial B_1|)^{-1}$, and  $H$ is the regular part of $G$ which, for every $x\in \Omega$, satisfies
$$
\begin{cases}
-\Delta_y H(x,y)=0 & \text{ for } y\in \Omega,\\
H(x,y)=\frac{\gamma_4}{|x-y|^{2}}   &  \text{ for } y\in \partial \Omega.
\end{cases}
$$
The Robin function of $\Omega$ is defined as $\tau(x):=H(x,x)$, and satisfies $\tau(x)\to +\infty$ as $\dist(x,\partial \Omega)\to 0$. Throughout the paper, we will always label the following constants: 
\begin{equation}\label{eq:constantsAB}
  A:=\int_{\R^4} U_{1,0}^3=\int_{\R^4} \frac{\alpha_4^3}{(1+|y|^2)^3}\, dy, \quad B:=\int_{\R^4}  U_{1,0}^4= \int_{\R^4}\frac{\alpha_4^4}{(1+|y|^2)^4}\, dy,
\end{equation}
\begin{equation}\label{eq:constantsAB2}
\Gamma:=\int_{\R^N} \frac{\alpha_4^4}{|y|^2(1+|y|^2)^3}\, dy,  
\end{equation}
and use $B_\eps,\partial B_\eps$ instead of $B_\eps(0),\partial B_\eps(0)$ respectively. We will denote the $L^p(\Omega_\eps)$ norms by  $\|\cdot \|_{L^p}$, while $\|u\|_{H^1_0}^2:=\int_{\Omega_\eps} |\nabla u|^2$ for every $u\in H^1_0(\Omega_\eps)$.
 
 \section{The ansatz and reduction scheme}
Recall that, without loss of generality, we assume \eqref{eq:assumption_wlog}; due to the symmetry, by the principle of symmetric criticality we can work in the space
\begin{equation}\label{eq:Heps}
H_\eps:=H_{\eps,0}=\{u\in H^1_0(\Omega_\eps):\ u(-x)=u(x)\ \forall x\in \Omega_\eps\}. 
\end{equation}  
 We deal with solutions of
\begin{equation}\label{eq:systemwithf}
\begin{cases}
-\Delta u_1=f(u_1)+\beta u_1u_2^2\\
-\Delta u_2=f(u_2)+\beta u_2u_1^2\\
u_1,u_2\in H^1_0(\Omega_\eps),
\end{cases}
\end{equation}
where $f:\R\to \R$, $f(s):=(s^+)^3$. 
Denote by $\Ical^*:L^{\frac{4}{3}}(\Omegae)\to H^1_0(\Omegae)$ the adjoint operator of the canonical Sobolev embedding $\Ical: H^1_0(\Omegae)\to L^4(\Omegae)$. This means that $v:=\Ical^* u$ can be defined as the (unique) weak solution of
\[
-\Delta v=u \text{ in } \Omega_\eps,\qquad v=0 \text{ on } \partial \Omega_\eps.
\]
Observe that, if $u$ is symmetric with respect to the origin, so is $\Ical^*u$.
The operator $\Ical^*$ is continuous: there exists $C>0$, independent of $\eps$, such that
\[
\|\Ical^*u\|_{H^1_0}\leq C \|u\|_{L^\frac{4}{3}} \qquad \forall u\in L^\frac{4}{3}(\Omegae).
\]
Using this operator, we can rewrite \eqref{eq:systemwithf} as
\begin{equation}\label{eq:adjointeq}
u_1=\Ical^*\left(f(u_1)+\beta u_1 u_2^2\right),\qquad u_2=\Ical^*\left(f(u_2)+\beta  u_2u_1^2 \right).
 \end{equation}
 Denote $U_j:=U_{\delta_j}$ for $j=1,\ldots, k$. Our ansatz is the following: for any integer $k\ge2$, we look for a solution of \eqref{eq:systemwithf} in $H_\eps$ of the form
\begin{equation}\label{eq:ansatz}
u_1=\sum\limits_{j\in I_1}P_{\eps}U_{j}+\phi_1\quad  \hbox{and}\quad 
u_2=\sum\limits_{j\in I_2} P_{\eps}U_{j}+\phi_2,
\end{equation}
where
\begin{equation}\label{eq:rates}
\delta_j=d_j\epsilon^{j\over k+1}\(\log{1\over\epsilon}\)^{{1\over2}-{j\over k+1}},\quad j=1,\ldots, k,
\end{equation} 
$\mathbf{d}=(d_1,\ldots, d_k)$ belongs to the set
\begin{equation}\label{eq:Xeta}
X_\eta=\left\{ \mathbf{d}\in \R^k:\  \eta<d_1,\ldots, d_k<1/\eta\right\}\quad \text{ for some $\eta \ll1$},
\end{equation}
and $\phi_1,\phi_2\in H_\eps$. 
\begin{remark}\label{rem:ratesrelations}
For future reference, we collect in this remark several important relations between the different rates $\delta_j$. Given $\eta>0$, we have
 \begin{equation}\label{delta}
\frac{\eps}{\delta_j}= \frac{1}{d_j} \eps^\frac{k+1-j}{k+1}\left(\log \frac{1}{\eps}\right)^{\frac{j}{k+1}-\frac{1}{2}}\to 0 \quad \text{ and } \quad {\delta_{j+1}\over\delta_j}=\frac{d_{j+1}}{d_j}\eps^\frac{1}{k+1}\left(\log \frac{1}{\eps}\right)^{-\frac{1}{k+1}}  \to 0
 \end{equation}
 as $\eps\to 0$, uniformly for $\mathbf{d}\in X_\eta$.
\end{remark}

For each $\eps>0$ small, our aim is to find $\eta>0$, $\mathbf{d}\in X_\eta$ and $\phi_1,\phi_2\in H_\eps$ such that, for $i,j=1,2$, $i\neq j$,
\begin{equation}\label{eq:systemajoint}
\sum\limits_{l\in I_i}P_{\eps}U_{l}+\phi_i=\Ical^*\left(f(\sum\limits_{l\in I_i}P_{\eps}U_{l}+\phi_i)+\beta (\sum\limits_{l\in I_i}P_{\eps}U_{l}+\phi_i)(\sum\limits_{l\in I_j}P_{\eps}U_{l}+\phi_j)^2\right).
\end{equation}
Given $\eps>0$ and $d_1,\ldots, d_k>0$, for $\delta_i$ defined as before define 
\[
\psi_{i}(x):=\frac{\partial U_{i}}{\partial {\delta_i}}(x)= \alpha_4\frac{|x|^2-\delta_i^2}{(\delta_i^2+|x|^2)^2}
\]
(recall the Notation section) and
\[
K_1 = K_{1,\mf{d},\eps}:=\text{span}\left\{P_\eps \psi_{j}:\ j\in I_1\right\},\quad K_2=K_{2, \mf{d},\eps}:=\text{span}\left\{P_\eps \psi_{j}:\ j\in I_2\right\},\quad \mf{K}_{\mf{d},\eps}:=K_1\times K_2.
\]
Observe that $\mf{K}_{\mf{d},\eps}^\perp=K_1^\perp\times K_2^\perp$. Moreover, consider the projection maps
\[
\Pi_i:H_\eps \to K_i,\qquad \Pi_i^\perp:H_\eps\to K_i^\perp,\qquad i=1,2.
\]
We can rewrite \eqref{eq:systemajoint} as a system of 4 equations: for $i,j=1,2$, $j\neq i$,
\begin{equation}\label{eq:systemajointProjection1}
\Pi_i\left(\sum\limits_{l\in I_i}P_{\eps}U_{l}+\phi_i\right)=\Pi_i\circ\Ical^*\left(f(\sum\limits_{l\in I_i}P_{\eps}U_{l}  +\phi_i) +\beta (\sum\limits_{l\in I_i}P_{\eps}U_{l}(x)+\phi_i)(\sum\limits_{l\in I_{j}}P_{\eps}U_{l}+\phi_{j})^2\right),
\end{equation}
\begin{equation}\label{eq:systemajointProjection2}
\Pi_i^\perp\left(\sum\limits_{l\in I_i}P_{\eps}U_{l}+\phi_i\right)=\Pi_i^\perp\circ\Ical^*\left(f(\sum\limits_{l\in I_i}P_{\eps}U_{l}  +\phi_i)+\beta (\sum\limits_{l\in I_i}P_{\eps}U_{l}+\phi_i)(\sum\limits_{l\in I_{j}}P_{\eps}U_{l}(x)+\phi_{j})^2\right).
\end{equation} 
In the next section, given $\eps,\eta>0$ sufficiently small and $\mf{d}\in X_\eta$, we find a unique $(\phi_1,\phi_2)=(\phi_1^{\mf{d},\eps},\phi_2^{\mf{d},\eps})\in K_{\mf{d},\eps}^\perp$ solution to \eqref{eq:systemajointProjection2}. By plugging this result in \eqref{eq:systemajointProjection1}, we end up having a problem with unknown $\mf{d}\in\R^k$ (thus a finite dimensional problem), which can be stated in terms of a \emph{reduced} energy. We analyse this reduced energy in Section \ref{sec:reducedenergy}.

\begin{remark}\label{rem:section2}
For the general system \eqref{eq:gensystem} and given a partition $I_1,\ldots, I_m$ of $\{1,\ldots, k\}$, the ansatz is exactly the same: $u_i=\sum_{j\in I_i} U_i+\phi_i$, for $i=1,\ldots, m$, where $\phi_i\in K_i$. We denote in this case $\mf{K}_{\mf{d},\eps}=K_1^\perp \times \ldots \times K_m^\perp$, and split the system of $m$ equations:
\begin{equation}\label{eq:systemajoint}
\sum\limits_{l\in I_i}P_{\eps}U_{l}+\phi_i=\Ical^*\left(f(\sum\limits_{l\in I_i}P_{\eps}U_{l}+\phi_i)+\beta (\sum\limits_{l\in I_i}P_{\eps}U_{l}+\phi_i)\mathop{\sum_{j=1}^m}_{j\neq i}(\sum\limits_{l\in I_j}P_{\eps}U_{l}+\phi_j)^2\right)
\end{equation}
($i=1,\ldots, m$) in $2m$ equations using the projection maps $\Pi_i$ and $\Pi_i^\perp$.
\end{remark}

\section{Reduction to a Finite Dimensional Problem}\label{sec:reduction}

In this section we study the solvability of \eqref{eq:systemajointProjection2}. We rewrite \eqref{eq:systemajointProjection2} as
\begin{equation}\label{L=N+R}
L_{\mf{d},\eps}^i(\bs{\phi}) = N_{\mf{d},\eps}^i(\bs{\phi}) + R_{\mf{d},\eps}^i,
\end{equation}
where $L$ stays for the linear part  
\begin{equation}\label{def L}
\begin{split}
L_{\mf{d},\eps}^1(\bs{\phi}) & = \Pi_1^{\perp} \Bigg\{ \phi_1 -\Ical^*\Bigg[ f'(\sum\limits_{j\in I_1}P_{\eps}U_{j}) \phi_1 + \beta (\sum\limits_{j\in I_2}P_{\eps}U_{j})^2 \phi_1  \\
& \hphantom{=\Pi_1^{\perp} \Bigg\{ \phi_i} + 2\beta(\sum_{j \in I_1} P_\eps U_{j})(\sum_{j \in I_2} P_\eps U_{j}) \phi_2\Bigg]\Bigg\},
\end{split}
\end{equation} 
$N$ stays for the nonlinear part
\begin{equation}\label{def N}
\begin{split}
N_{\mf{d},\eps}^1 &(\bs{\phi}) = \Pi_1^{\perp} \circ \Ical^* \left[ f( \sum_{j \in I_1} P_\eps U_{j}+\phi_1) - f( \sum_{j \in I_1} P_\eps U_{j}) - f'( \sum_{j \in I_1} P_\eps U_{j})\phi_1 \right.\\
& \hphantom{(\bs{\phi}) = \Pi_i^{\perp} \circ \Ical^* \Bigg[} + \beta ( \sum_{j \in I_1} P_\eps U_{j}+\phi_1)( \sum_{j \in I_2} P_\eps U_{j}+\phi_2)^2 - \beta ( \sum_{j \in I_1} P_\eps U_{j}) ( \sum_{j \in I_2} P_\eps U_{j})^2 \\
& \left.\hphantom{(\bs{\phi}) = \Pi_i^{\perp} \circ \Ical^* \Bigg[} - \beta (\sum\limits_{j\in I_2}P_{\eps}U_{j})^2 \phi_1 - 2\beta(\sum_{j \in I_1} P_\eps U_{j})(\sum_{j \in I_2} P_\eps U_{j}) \phi_2 \right]\\
&= \Pi_1^{\perp} \circ \Ical^* \left[ f( \sum_{j \in I_1} P_\eps U_{j}+\phi_1) - f( \sum_{j \in I_1} P_\eps U_{j}) - f'( \sum_{j \in I_1} P_\eps U_{j})\phi_1 \right.\\
&\left. \hphantom{(\bs{\phi}) = \Pi_i^{\perp} \circ \Ical^* \Bigg[} + \beta  ( \sum_{j \in I_1} P_\eps U_{j}) \phi_2 ^2 + 2\beta (\sum_{j\in I_2} P_\eps U_j)\phi_1\phi_2+\beta\phi_1\phi_2^2 \right],
\end{split}
\end{equation}
and $R$ is the remainder term
\begin{equation}\label{def R}
\begin{split}
R_{\mf{d},\eps}^1 & = \Pi_1^{\perp} \Bigg\{ - \sum_{j \in I_1} P_\eps U_{j}   + \Ical^*\Bigg[ f(\sum_{j \in I_1} P_\eps U_{j}) + \beta (\sum_{j \in I_1} P_\eps U_{j})(\sum_{j \in I_2} P_\eps U_j)^2\Bigg]\Bigg\} \\
& = \Pi_1^{\perp} \circ \Ical^* \Bigg[ f(\sum_{j \in I_1} P_\eps U_{j})- \sum_{j \in I_1} f(  U_{j}) + \beta (\sum_{j \in I_1} P_\eps U_{j})(\sum_{j \in I_2} P_\eps U_j)^2\Bigg]
\end{split}
\end{equation}
where the last equality is a consequence of the definitions of $\Ical^*$ and of $f$ (analogue expressions hold for $L^2_{\mf{d},\eps}$, $N^2_{\mf{d},\eps}$ and $R^2_{\mf{d},\eps}$).

We also define 
\[
\mf{L}_{\mf{d},\eps} := (L_{\mf{d},\eps}^1,L_{\mf{d},\eps}^2): \mf{K}_{\mf{d},\eps}^\perp \to \mf{K}_{\mf{d},\eps}^\perp,
\]
and $\mf{R}_{\mf{d},\eps}$ and $\mf{N}_{\mf{d},\eps} $ in an analogue way.

\begin{proposition}\label{prop:C^1phi}
Let $\beta<0$. Then for every $\eta>0$ sufficiently small there exists $\eps_0>0$ and $C>0$ such that, whenever $\eps\in (0,\eps_0)$ and $\mf{d}\in X_\eta$, there exists a unique function $\phib=\phib^{\mf{d},\eps}\in \mf{K}^\perp_{\mf{d},\eps}$ solving the equation
\begin{equation*}
\mf{L}_{\mf{d},\eps}(\phib)=\mf{R}_{\mf{d},\eps}+\mf{N}_{\mf{d},\eps}(\phib).
\end{equation*}
and satisfying
\begin{equation*}
\|\phib^{\mf{d},\eps}\|_{H^1_0(\Omegae)}\leq C \eps^\frac{1}{k+1} \left(\log\left(\frac{1}{\eps}\right)\right)^{-\frac{1}{k+1}} =\text{o}(\delta_1) 
\end{equation*}
 Moreover, the map  $
 X_\eta \to \mf{K}^\perp_{\mf{d},\eps},$ $\mf{d} \mapsto \phib^{\mf{d},\eps}
$
is of class $\mathcal{C}^1.$

\end{proposition}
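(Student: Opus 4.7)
The plan is to cast the fixed-point equation in the form $\phib = \mf{L}_{\mf{d},\eps}^{-1}\bigl(\mf{R}_{\mf{d},\eps}+\mf{N}_{\mf{d},\eps}(\phib)\bigr)$ and solve it by the Banach contraction principle on a small ball of $\mf{K}_{\mf{d},\eps}^\perp$ of the prescribed radius. This requires three independent pieces: (i) uniform invertibility of the linear operator $\mf{L}_{\mf{d},\eps}$, (ii) a size estimate for the remainder $\mf{R}_{\mf{d},\eps}$, and (iii) a quadratic-type estimate for the superlinear remainder $\mf{N}_{\mf{d},\eps}$. Once these are proved, the contraction mapping theorem will supply $\phib^{\mf{d},\eps}$, and the implicit function theorem applied to $F(\phib,\mf{d}):=\phib-\mf{L}_{\mf{d},\eps}^{-1}(\mf{R}_{\mf{d},\eps}+\mf{N}_{\mf{d},\eps}(\phib))$ at its zero will give the $C^1$ dependence on $\mf{d}$, since $\partial_{\phib}F=\id-\mf{L}_{\mf{d},\eps}^{-1}\mf{N}'_{\mf{d},\eps}(\phib^{\mf{d},\eps})$ is invertible by (i) and the smallness of $\phib^{\mf{d},\eps}$.

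For the invertibility step I would argue by contradiction: if there were sequences $\eps_n\to 0$, $\mf{d}_n\in X_\eta$ and $\bs{\psi}_n\in\mf{K}_{\mf{d}_n,\eps_n}^\perp$ with $\|\bs{\psi}_n\|_{H^1_0}=1$ but $\|\mf{L}_{\mf{d}_n,\eps_n}(\bs{\psi}_n)\|_{H^1_0}\to 0$, then for each $i\in\{1,2\}$ and each $j\in I_i$ I would rescale $\psi_{i,n}$ at scale $\delta_j^{n}$, namely consider $\tilde\psi_{i,n}^{(j)}(y):=\delta_j^n\,\psi_{i,n}(\delta_j^n y)$, and pass to the limit in $\mathcal{D}^{1,2}_s(\R^4)$. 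Because $\delta_{j+1}^n/\delta_j^n\to 0$ by Remark \ref{rem:ratesrelations}, bubbles at different scales decouple in the limit; moreover, since the cross-scale interactions $P_\eps U_j P_\eps U_l$ tend to $0$ in the suitable $L^p$ norms, the coupling terms $\beta(\sum_{j\in I_2}P_\eps U_j)^2\phi_1+2\beta(\sum P_\eps U_j)(\sum P_\eps U_j)\phi_2$ also disappear after rescaling. Each rescaled limit then solves the linearized equation $-\Delta V=3U_{1,0}^2 V$ in $\mathcal{D}^{1,2}_s(\R^4)$, so it must be a multiple of $\partial_\delta U_\delta|_{\delta=1}$; the orthogonality $\bs{\psi}_n\perp P_\eps\psi_j$ forces the multiple to be $0$, and a standard bootstrap (removing the concentrated parts and using that $\|\Ical^*\|_{L^{4/3}\to H^1_0}$ controls the residual) contradicts $\|\bs{\psi}_n\|_{H^1_0}=1$. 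This is the main technical obstacle, as the interaction terms with $\beta<0$ must be handled carefully — this is the content to be established in the forthcoming Subsection \ref{subsec:linear}.

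For the remainder, I would split $\mf{R}_{\mf{d},\eps}=\mf{R}^{\mathrm{self}}+\mf{R}^{\mathrm{int}}$, with $\mf{R}^{\mathrm{self}}$ coming from $f(\sum_{j\in I_i}P_\eps U_j)-\sum_{j\in I_i}f(U_j)$ and $\mf{R}^{\mathrm{int}}$ from the competitive coupling $\beta(\sum_{j\in I_1}P_\eps U_j)(\sum_{j\in I_2}P_\eps U_j)^2$. Using $|P_\eps U_j-U_j|\lesssim \delta_j H(0,\cdot)+O(\eps^2/\delta_j)$, the self-part contributes boundary and cross-scale terms of order $(\delta_{j+1}/\delta_j)^2\log(\delta_j/\delta_{j+1})$ and $\eps^2/\delta_1^2$; the interaction part contributes exactly the consecutive-scale products $U_j U_{j\pm 1}^2$. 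With the scale choice \eqref{eq:rates}, all these terms balance to size $O\bigl(\eps^{1/(k+1)}(\log(1/\eps))^{-1/(k+1)}\bigr)$ in $L^{4/3}(\Omega_\eps)$, which after applying $\Ical^*$ gives the same bound in $H^1_0$. The key $(\log 1/\eps)^{-1/(k+1)}$ factor emerges precisely from the logarithmic gain in the competitive-term asymptotics, as announced in estimate \eqref{eq:crucial_ln}.

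For the nonlinear part, the elementary estimates $|f(a+t)-f(a)-f'(a)t|\lesssim |a|t^2+|t|^3$ and the expansion of $(\sum P_\eps U+\phi_i)(\sum P_\eps U+\phi_j)^2$ yield, via Hölder and Sobolev embedding, bounds of the form
\[
\|\mf{N}_{\mf{d},\eps}(\bs{\phi})\|_{H^1_0}\leq C\bigl(\|\bs{\phi}\|_{H^1_0}^2+\|\bs{\phi}\|_{H^1_0}^3\bigr),
\]
together with a Lipschitz bound
\[
\|\mf{N}_{\mf{d},\eps}(\bs{\phi})-\mf{N}_{\mf{d},\eps}(\tilde{\bs{\phi}})\|_{H^1_0}\leq C\bigl(\|\bs{\phi}\|_{H^1_0}+\|\tilde{\bs{\phi}}\|_{H^1_0}+\|\bs{\phi}\|_{H^1_0}^2+\|\tilde{\bs{\phi}}\|_{H^1_0}^2\bigr)\|\bs{\phi}-\tilde{\bs{\phi}}\|_{H^1_0}.
\]
Combining (i)--(iii), the map $T(\phib):=\mf{L}_{\mf{d},\eps}^{-1}(\mf{R}_{\mf{d},\eps}+\mf{N}_{\mf{d},\eps}(\phib))$ sends the ball of radius $C_0\,\eps^{1/(k+1)}(\log(1/\eps))^{-1/(k+1)}$ into itself and is a contraction there, for $C_0$ large and $\eps$ small. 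The unique fixed point is $\phib^{\mf{d},\eps}$; since $\delta_1=d_1\eps^{1/(k+1)}(\log(1/\eps))^{1/2-1/(k+1)}$, the ratio $\|\phib^{\mf{d},\eps}\|_{H^1_0}/\delta_1$ is $O((\log(1/\eps))^{-1/2})\to 0$, giving the stated $o(\delta_1)$ bound.
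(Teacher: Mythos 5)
Your overall strategy (fixed point via contraction in a ball of radius $\sim\eps^{1/(k+1)}(\log(1/\eps))^{-1/(k+1)}$, implicit function theorem for $C^1$ dependence) matches the paper, and so do steps (ii) and (iii) in outline. The genuine gap is in step (i), the invertibility of $\mf{L}_{\mf{d},\eps}$. You rescale $\psi_{i,n}$ only at the scales $\delta_j$ with $j\in I_i$, and you assert that the coupling term $\beta(\sum_{j\in I_2}P_\eps U_j)^2\phi_1$ disappears after rescaling because cross-scale products decay. That is true at scales $\ell\in I_1$, but nothing prevents $\psi_{1,n}$ from concentrating at a scale $\ell\in I_2$: the orthogonality constraint $\phi_1\perp K_1$ only controls the $I_1$-scales, so you must also rescale $\psi_{1,n}$ at scales in $I_2$. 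At a scale $\ell\in I_2$, the term $\beta(\sum_{j\in I_2}P_\eps U_j)^2\phi_1$ does \emph{not} vanish in the limit; it produces the leading contribution $\beta U_{1,0}^2\hat z_1^\ell$, while the $3(\sum_{j\in I_1}P_\eps U_j)^2$ term dies. The limit equation is then $-\Delta\hat z_1^\ell=\beta U_{1,0}^2\hat z_1^\ell$, which has \emph{no} kernel relation to $\partial_\delta U_\delta$, and orthogonality cannot be invoked. The vanishing of $\hat z_1^\ell$ there comes instead from the sign hypothesis: for $\beta<0$, testing against $\hat z_1^\ell\in\cD^{1,2}(\R^4)$ gives $0\le\int|\nabla\hat z_1^\ell|^2=\beta\int(U_{1,0}\hat z_1^\ell)^2\le 0$. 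Your argument never uses $\beta<0$, which is a strong signal that this case has been dropped; without it the contradiction argument is incomplete, because one cannot rule out mass of $\psi_{1,n}$ sitting at the foreign scales.

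Two smaller remarks. First, the paper localizes the rescaled sequence with cutoff functions supported in annuli $\cB_{\ell,n}$ separating consecutive scales before passing to the limit; without a cutoff the equation satisfied by the rescaled function picks up uncontrolled contributions from neighboring bubbles, so this step is not purely cosmetic. Second, before the rescaling one must first show that the Lagrange-multiplier part $w_{i,n}\in K_{i,n}$ vanishes in norm (the paper's first lemma); you skip this and jump directly to the bubble decomposition of what is effectively $z_{i,n}=\phi_{i,n}-h_{i,n}-w_{i,n}$.
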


The proof of the proposition takes the rest of this section, and is divided into several intermediate lemmas. 

\subsection{Study of the linear part}\label{subsec:linear}

As a first step, it is important to understand the solvability of the linear problem associated with \eqref{L=N+R}, i.e. 
\begin{equation}\label{linear pb}
L_{\mf{d},\eps}^i(\bs{\phi}) = f_i, \quad \text{with} \quad f_i \in K_{i}^\perp.
\end{equation}

\begin{proposition}\label{lem: linear part}
For every $\eta>0$ small enough there exists $\eps_0>0$ small, and $C>0$, such that if $\eps \in (0,\eps_0)$ then
\begin{equation}\label{17ott1}
\|\mf{L}_{\mf{d},\bs{\tau},\eps}(\bs{\phi})\|_{H_0^1(\Omega_\eps)} \ge C \|\bs{\phi}\|_{H_0^1(\Omega_\eps)}  \qquad \forall \bs{\phi} \in H_0^1(\Omega_\eps,\R^2)
\end{equation}
for every $\mf{d} \in X_\eta$. Moreover, $\mf{L}_{\mf{d},\eps}$ is invertible in $\mf{K}_{\mf{d},\eps}^\perp$, with continuous inverse.
\end{proposition}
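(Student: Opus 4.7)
I would follow the classical contradiction-and-rescaling scheme, adapted to handle towering bubbles and the competitive coupling. Suppose, toward contradiction, that there exist sequences $\eps_n\downarrow 0$, $\mf{d}_n\in X_\eta$ (up to subsequence $\mf{d}_n\to\mf{d}_*\in\overline{X_\eta}$), and $\bs{\phi}_n=(\phi_{1,n},\phi_{2,n})\in \mf{K}_{\mf{d}_n,\eps_n}^\perp$ with $\|\bs{\phi}_n\|_{H^1_0}=1$ but $\mf{h}_n:=\mf{L}_{\mf{d}_n,\eps_n}(\bs{\phi}_n)\to 0$ in $\mf{K}_{\mf{d}_n,\eps_n}^\perp$. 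Using the definition of $\Ical^*$ and the decomposition $\phi_{i,n}=\mf{h}_{i,n}+\Pi_i^\perp(\cdots)$, one rewrites the equation in strong form as
\[
-\Delta\phi_{i,n}-f'\!\Big(\sum_{j\in I_i}P_\eps U_j\Big)\phi_{i,n}-\beta\Big(\sum_{j\in I_{i'}}P_\eps U_j\Big)^{\!2}\phi_{i,n}-2\beta\Big(\sum_{j\in I_i}P_\eps U_j\Big)\!\Big(\sum_{j\in I_{i'}}P_\eps U_j\Big)\phi_{i',n}=\widehat h_{i,n},
\]
where $\widehat h_{i,n}\to 0$ in $H^{-1}$; the projection correction is absorbed by a standard argument showing that $\|\Pi_i(\cdots)\|_{H^1_0}\to 0$, using the estimates on $\|P_\eps U_j\|_{L^4}$ and $\|P_\eps\psi_j\|_{H^1_0}$.

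For each $j\in\{1,\dots,k\}$ I would then blow up at the scale $\delta_j^{(n)}$ via the conformal rescaling $\tilde{\phi}_n^{i,j}(y):=\delta_j^{(n)}\phi_{i,n}(\delta_j^{(n)}y)$, which preserves the $\mathcal{D}^{1,2}$-norm in $\R^4$. Since $\eps_n/\delta_j^{(n)}\to 0$ by \eqref{delta} and the outer boundary scales to infinity, the rescaled domains invade $\R^4$, and weak compactness in $\mathcal{D}^{1,2}(\R^4)$ yields $\tilde\phi_n^{i,j}\rightharpoonup v^{i,j}$. A direct computation shows $(P_\eps U_j)(\delta_j^{(n)}y)\to U_{1,0}(y)$ locally uniformly off the origin, while $(P_\eps U_k)(\delta_j^{(n)}y)\to 0$ whenever $k\neq j$: if $\delta_k\ll\delta_j$ the mass of $U_k$ concentrates into a point at the new scale, whereas if $\delta_k\gg\delta_j$ then $P_\eps U_k$ is uniformly $O(\delta_k^{-1})\to 0$ on compact sets. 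Consequently all the coupling contributions vanish in the rescaled limit and $v^{i,j}$ solves $-\Delta v^{i,j}=3U_{1,0}^2\,v^{i,j}$ in $\R^4$.

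By symmetry $v^{i,j}\in\mathcal{D}^{1,2}_s(\R^4)$, so the non-degeneracy result recalled in the Notations section forces $v^{i,j}=\lambda_{i,j}\,\partial_\delta U_\delta|_{\delta=1}$. The orthogonality $\phi_{i,n}\perp P_\eps\psi_j$ for $j\in I_i$, once rescaled and passed to the limit (again using that the other projectors $P_\eps\psi_k$, $k\neq j$, disappear after rescaling by $\delta_j^{(n)}$), yields $\langle v^{i,j},\partial_\delta U_\delta|_{\delta=1}\rangle_{\mathcal{D}^{1,2}}=0$, hence $v^{i,j}\equiv 0$ for every $i,j$. Testing the equation against $\phi_{i,n}$ itself, the quadratic terms $\int f'(\sum P_\eps U_j)\phi_{i,n}^2$ and the cross terms can be estimated via a localization/rescaling argument combined with Rellich's theorem (so that each rescaled piece $\tilde\phi_n^{i,j}\to 0$ strongly in $L^4_{\text{loc}}$), and one concludes $\|\bs{\phi}_n\|_{H^1_0}\to 0$, contradicting the normalization. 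The invertibility of $\mf{L}_{\mf{d},\eps}$ on $\mf{K}_{\mf{d},\eps}^\perp$ with continuous inverse then follows from \eqref{17ott1} and Fredholm theory, because $\mf{L}_{\mf{d},\eps}=\mathrm{Id}-(\text{compact})$.

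\textbf{Main obstacle.} The delicate step is the decoupling of the competitive interaction terms after rescaling: one must quantitatively justify that for test functions $\phi$ of order $1$ in $H^1_0$, the mixed products $(P_\eps U_j)(P_\eps U_k)\phi$ with $j\in I_i$, $k\in I_{i'}$ produce vanishing contributions both in the rescaled limit problem and in the quadratic form $\langle\mf{L}_{\mf{d},\eps}(\bs\phi),\bs\phi\rangle$. This relies on the sharp scale separation \eqref{delta} and has to be carried out carefully because, unlike in the scalar Coron problem, here the coupling nonlinearity involves products of bubbles living at different scales and the sign of $\beta$ alone is not sufficient to conclude by a naïve positivity argument.
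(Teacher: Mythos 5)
Your proposal follows the same contradiction-and-rescaling scheme as the paper, so the overall architecture is sound. However there are a few concrete gaps that would need to be filled before the argument is complete.

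First, your claim that ``all the coupling contributions vanish in the rescaled limit and $v^{i,j}$ solves $-\Delta v^{i,j}=3U_{1,0}^2\,v^{i,j}$'' is only correct when $j\in I_i$. If you rescale $\phi_{i,n}$ at the scale $\delta_j^{(n)}$ with $j\in I_{i'}$ (the \emph{other} component), then it is the self-interaction potential $3(\sum_{l\in I_i}P_\eps U_l)^2$ that vanishes after rescaling, while the competitive potential $\beta(\sum_{l\in I_{i'}}P_\eps U_l)^2$ survives and tends to $\beta U_{1,0}^2$. The limit problem is therefore $-\Delta v^{i,j}=\beta U_{1,0}^2 v^{i,j}$, and since $j\notin I_i$ you do not have an orthogonality condition for $\phi_{i,n}$ against $P_\eps\psi_j$ to invoke. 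You can still conclude $v^{i,j}=0$ here, but the reason is completely different: because $\beta<0$, testing against $v^{i,j}\in\mathcal{D}^{1,2}(\R^4)$ gives $0\le\int|\nabla v^{i,j}|^2=\beta\int U_{1,0}^2(v^{i,j})^2\le 0$. This dichotomy must be spelled out; the paper handles both cases separately in its analogue of your rescaling step.

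Second, the passage from ``all weak limits vanish'' to ``$\|\bs\phi_n\|_{H^1_0}\to 0$'' is more delicate than you indicate. You cannot simply test the equation against $\phi_{i,n}$ and invoke Rellich to get strong $L^4_{\text{loc}}$ convergence of the rescaled pieces (Rellich gives $L^q_{\text{loc}}$ only for $q<2^*=4$, so $q=4$ is precisely excluded). The correct way, as in the paper, is to decompose $B_\rho\setminus\overline{B_{\eps_n}}$ into a disjoint union of annuli $\cA_{\ell,n}=B_{\sqrt{\delta_{\ell,n}\delta_{\ell-1,n}}}\setminus\overline{B_{\sqrt{\delta_{\ell,n}\delta_{\ell+1,n}}}}$, introduce smooth cutoffs $\chi_{\ell,n}$ adapted to each annulus, and rescale the \emph{truncated} function $\delta_{\ell,n}z_{i,n}(\delta_{\ell,n}\cdot)\chi_{\ell,n}(\delta_{\ell,n}\cdot)$. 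This both keeps the $\mathcal{D}^{1,2}$-norms bounded and lets you split the quadratic form $\int(\sum P_\eps U_j)^2 z^2$ annulus by annulus. On the annulus at scale $\ell$, $U_{\ell,n}^2\in L^2$ (with decay) together with the weak $L^2$-convergence of $(\hat z^\ell_n)^2$ gives the required smallness; on annuli at different scales the bubbles are small and you can control by Lemma~\ref{lem: interaction 0} type estimates. Without the annular localization your ``localization/rescaling argument combined with Rellich's theorem'' remains a placeholder rather than a proof, since the leftover boundary/cross-scale contributions are exactly what the cutoffs are designed to discard.

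Finally, a minor remark: your statement that the orthogonality $\phi_{i,n}\perp P_\eps\psi_j$ passes to the limit after rescaling is correct, but only holds for $j\in I_i$, consistent with the first point above; and the Fredholm/compactness observation for invertibility is fine once \eqref{17ott1} is established.
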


The long proof proceeds by contradiction. For a fixed $\eta>0$ small, let us suppose that there exist sequences
\[
\{\eps_n\} \subset \R^+, \ \eps_n \to 0, \  \{\mf{d}_n\} \subset X_\eta,\  \{\bs{\phi}_n\} \subset K_{1,n}^\perp \times  K_{2,n}^{\perp}
\]
such that
\[
\| \bs{\phi}_n\|_{H_0^1(\Omega_{\eps_n})}=1 \quad \text{and} \quad \|\mf{L}_{n}(\bs{\phi}_n)\|_{H_0^1(\Omega_{\eps_n})} \to 0
\]
as $n \to \infty$, where we wrote $K_{i,n}:= K_{i, \mf{d}_{n},\eps_n}$ and $\mf{L}_n:= \mf{L}_{\mf{d}_n,  \eps_n}$ for short. In the same spirit, in this proof we write $P_n:= P_{\eps_n}$, $U_{i,n}:= U_{\d_{i,n},0}$, $ \psi_{i,n}:= \psi_{\d_{i,n},0}$, and $\Omega_n:= \Omega_{\eps_n}$.

Let $\mf{h}_n:= \mf{L}_{n}(\bs{\phi}_n)$. Then, by definition of $\mf{L}_n$,
\begin{equation}\label{eq phi}
\begin{split}
 \phi_{1,n}   & = h_{1,n} + w_{1,n} \\
& + \Ical^*\Bigg[ 3(\sum_{j \in I_1}P_n U_{j,n})^{2} \phi_{1,n}   + \beta (\sum_{j \in I_2} P_n U_{j,n})^2  \phi_{1,n} + 2\beta
(\sum_{j \in I_1} P_n U_{j,n}) (\sum_{j \in I_2} P_n U_{j,n})  \phi_{2,n} \Bigg] \end{split}
\end{equation}
(an analogue equation holds for $\phi_{2,n}$) for some $w_{i,n} \in K_{i,n}$.

\begin{lemma}
$\|w_{i,n}\|_{H_0^1(\Omega_n)} \to 0$ as $n \to \infty$. 
\end{lemma}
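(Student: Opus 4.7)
My plan is to expand $w_{1,n}\in K_{1,n}$ (and symmetrically $w_{2,n}$) on its natural basis, $w_{1,n}=\sum_{j\in I_1}c^{1}_{j,n}P_n\psi_{j,n}$, and to derive and solve a linear system for the coefficients. The system is obtained by taking the $H^1_0$--inner product of \eqref{eq phi} against each generator $P_n\psi_{l,n}$, $l\in I_1$: since $\phi_{1,n},h_{1,n}\in K_{1,n}^{\perp}$, both terms are killed by this test, leaving
\begin{equation}\label{eq:plan1}
\sum_{j\in I_1}c^{1}_{j,n}\,\langle P_n\psi_{j,n},P_n\psi_{l,n}\rangle_{H^1_0}\;=\;-\int_{\Omega_n}\bigl[\,\cdots\,\bigr]\,P_n\psi_{l,n},
\end{equation}
where $\bigl[\,\cdots\,\bigr]$ is the bracketed integrand of \eqref{eq phi}.

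The Gram matrix on the left, after the natural normalization $e_{j,n}:=\delta_{j,n}P_n\psi_{j,n}$, is a perturbation of a multiple of the identity: from $-\Delta P_n\psi_{j,n}=3U^2_{j,n}\psi_{j,n}$ and the rescaling $y=x/\delta_{j,n}$ we have $\delta_{j,n}^{2}\|P_n\psi_{j,n}\|_{H^1_0}^{2}\to 3\int_{\R^4}U_1^2\psi_1^2$, while the off-diagonal entries $\delta_{j,n}\delta_{l,n}\langle P_n\psi_{j,n},P_n\psi_{l,n}\rangle$ with $j\neq l$ vanish by standard bubble-interaction estimates exploiting the separation of scales recorded in Remark \ref{rem:ratesrelations}. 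It thus suffices to prove that for every $l\in I_1$
\begin{equation}\label{eq:plan2}
\delta_{l,n}\int_{\Omega_n}\bigl[\,\cdots\,\bigr]\,P_n\psi_{l,n}\;\longrightarrow\;0,
\end{equation}
since inversion of the near-identity system in \eqref{eq:plan1} then gives $c^{1}_{l,n}=o(\delta_{l,n})$ and hence $\|w_{1,n}\|_{H^1_0}\le\sum_l|c^{1}_{l,n}|\,\|P_n\psi_{l,n}\|_{H^1_0}=o(1)$.

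To verify \eqref{eq:plan2} I would rescale around $\delta_{l,n}$ by setting $\tilde\phi_{i,n}(y):=\delta_{l,n}\phi_{i,n}(\delta_{l,n} y)$, which is bounded in $\mathcal{D}^{1,2}(\R^4)$ since $\|\phi_{i,n}\|_{H^1_0}=1$. The delicate term is $3(P_nU_{l,n})^2\phi_{1,n}P_n\psi_{l,n}$, which carries the principal singularity: here the orthogonality $\phi_{1,n}\perp P_n\psi_{l,n}$ gives the exact identity $\int 3U^2_{l,n}\psi_{l,n}\phi_{1,n}=0$, so after writing $P_n U_{l,n}=U_{l,n}-(U_{l,n}-P_n U_{l,n})$ and $P_n\psi_{l,n}=\psi_{l,n}-(\psi_{l,n}-P_n\psi_{l,n})$ the leading piece cancels and one is left with projection errors of size $O(\eps^2/\delta_{l,n})$ that vanish once multiplied by $\delta_{l,n}$. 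All other summands in $\bigl[\,\cdots\,\bigr]$ are products either of bubbles at different scales inside the same tower, or of bubbles of $I_1$ against bubbles of $I_2$ (competitive cross-terms coupling with $\phi_{2,n}$); in each case Hölder combined with the change of variable $y=x/\delta_{l,n}$ and the boundedness of $\tilde\phi_{i,n}$ in $\mathcal{D}^{1,2}(\R^4)$ produces a factor $(\delta_{\min}/\delta_{\max})^{\alpha}$ with $\alpha>0$, which tends to zero by \eqref{delta}.

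The main obstacle is the sharp control of the interactions between consecutive bubbles in the same tower: their scale ratio is only $\eps^{1/(k+1)}(\log(1/\eps))^{-1/(k+1)}$, which is barely small because of the logarithmic factor, so crude Hölder estimates are insufficient and one must invoke precise bubble-interaction asymptotics of the form $\int U_{j,n}^{2}U_{j\pm 1,n}\,P_n\psi_{l,n}$ together with the refined decomposition $P_n\psi_{l,n}-\psi_{l,n}=O(\delta_{l,n}\tau(0))$ on compacts. Once \eqref{eq:plan2} is secured, the conclusion of the lemma for $w_{2,n}$ follows by the same argument, using the coupling term involving $\phi_{1,n}$ in the equation for $\phi_{2,n}$ and the boundedness $\|\phi_{1,n}\|_{H^1_0}\le 1$.
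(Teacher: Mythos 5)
Your plan follows the same route as the paper's proof: expand $w_{1,n}$ on the basis $\{P_n\psi_{j,n}\}_{j\in I_1}$, test the equation for $\phi_{1,n}$ against $\delta_{l,n}P_n\psi_{l,n}$, observe that $h_{1,n},\phi_{1,n}\in K_{1,n}^\perp$ drop out, show the resulting Gram system has near-diagonal dominant matrix $\sigma_0\mathrm{Id}+o(1)$, and prove that the right-hand side is $o(1)$ so that Cramer's rule forces the coefficients to vanish. The structure, the normalization, and the interaction lemmas invoked all match the paper.

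The one genuine difference is in the treatment of the self-interaction term $\delta_{l,n}\int 3(\sum_{j\in I_1}P_n U_{j,n})^2\phi_{1,n}P_n\psi_{l,n}$: the paper simply cites \cite[Formula (5.7)]{GeMuPi}, whereas you give an explicit argument exploiting the exact cancellation $\int_{\Omega_n}3U_{l,n}^2\psi_{l,n}\phi_{1,n}=0$, which follows from $\phi_{1,n}\perp P_n\psi_{l,n}$ in $H^1_0(\Omega_n)$ combined with $-\Delta P_n\psi_{l,n}=3U_{l,n}^2\psi_{l,n}$. This is a valid and self-contained alternative; it buys transparency at the cost of a careful bookkeeping of projection errors via Lemma \ref{lemma:estimates}. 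Two quantitative slips in your sketch are worth flagging, though they do not invalidate the plan. First, the leading order of $P_n\psi_{l,n}-\psi_{l,n}$ on compacts is $-AH(x,0)=O(1)$, not $O(\delta_{l,n}\tau(0))$; differentiating the expansion $P_\eps U_\delta-U_\delta=-A\delta H(x,0)-\frac{\alpha_4\eps^2}{\delta|x|^2}+R$ with respect to $\delta$ kills the extra factor of $\delta$. What saves you is the additional factor $\delta_{l,n}$ from the normalized test function; after Hölder the two error pieces come out as $O(\delta_{l,n}^2)$ and $O(\eps^2/\delta_{l,n}^2)$, both $o(1)$ by Remark \ref{rem:ratesrelations}. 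Second, your ``main obstacle'' paragraph is overcautious: within a fixed index set $I_i$ no two indices are consecutive by condition (5), so same-group scale ratios are at least $\eps^{2/(k+1)}(\log(1/\eps))^{-2/(k+1)}$, and the cross-group ratios $\delta_{j+1}/\delta_j\to 0$ are already enough for this lemma. The paper's own estimates \eqref{eq phi test3.2}--\eqref{eq phi test4} and \eqref{eq phi test3} use exactly crude Hölder plus Lemmas \ref{lem: interaction 1} and \ref{lem: int a 3}; the sharp log-corrected asymptotics are needed later (Lemma \ref{lemma:interaction_estimate} and Lemma \ref{lemma:remainder}), not here.
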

\begin{proof}
We focus on $w_{1,n}$, the proof for $w_{2,n}$ is analogue. As $w_{1,n} \in K_{1,n} = \textrm{span}\{P_n \psi_{j,n}: j \in I_1\}$, there exist constants $c_{j,n}$ such that
\[
w_{1,n} = \sum_{j \in I_1} c_{j,n} \delta_{j,n} P_n \psi_{j,n}.
\]
Now we consider the scalar product in $H_0^1(\Omega_\eps)$ of both sides in \eqref{eq phi} with $\delta_{i,n}  P_n \psi_{i,n}$, with $i \in I_1$: as $h_{1,n}, \phi_{1,n} \in K_{1,n}^\perp$, we obtain
\begin{equation}\label{eq phi test} 
\begin{split}
\d_{i,n} \int_{\Omega_n} \nabla w_{1,n} & \cdot \nabla (P_n \psi_{i,n})
= 3\delta_{i,n} \int_{\Omega_n} ( \sum_{j \in I_1} P_n U_{j,n})^{2} \phi_{1,n} (P_n \psi_{i,n}) \\
&+   \delta_{i,n} \beta \int_{\Omega_n}  ( \sum_{j \in I_2} P_n U_{j,n})^{2} \phi_{1,n} (P_n \psi_{i,n}) \\
&+ 2 \beta \delta_{i,n} \int_{\Omega_n}    ( \sum_{j \in I_1} P_n U_{j,n})   ( \sum_{j \in I_2} P_n U_{j,n})\phi_{2,n} (P_n \psi_{i,n}).
\end{split}
\end{equation}
The left hand side can be estimated using \cite[Remark 5.2]{GeMuPi} and \eqref{pa U con U} (see also \cite[p. 417]{PistoiaTavares}, noting that therein $\psi_{i,n}$ corresponds to $\delta_{i,n}\psi_{i,n}$ in the present paper) and obtaining 
\begin{equation}\label{eq phi test1}
\begin{split}
 \int_{\Omega_n} \nabla w_{1,n}  \cdot \nabla (\d_{i,n} P_n \psi_{i,n})  = c_{i,n} (\sigma_0 + o(1)) + o(1) \sum_{\substack{j \in I_1 \\ j \neq i}} c_{j,n}
\end{split}
\end{equation}
as $n \to \infty$, where
\[
\sigma_0 = 3 \alpha_4^4 \int_{\R^4} \frac{( |y|^2 -1)^2}{(1+ |y|^2)^6}\,dy.
\]
The first integral on the right hand side in \eqref{eq phi test} can be estimated as in \cite[Formula (5.7)]{GeMuPi}:
\begin{equation}\label{eq phi test2}
3\delta_{i,n} \int_{\Omega_n} ( \sum_{j \in I_1} P_n U_{j,n})^{2} \phi_{1,n} (P_n \psi_{i,n}) = o (1)
\end{equation}
as $n \to \infty$. 
We have now to estimate the interaction terms. To this purpose, we observe that by H\"older and Sobolev inequality, and by \eqref{pa U con U},
\begin{equation}
\label{eq phi test3.2}
\begin{split}
\left| \int_{\Omega_n}  ( \sum_{j \in I_2} P_n U_{j,n})^{2} \phi_{1,n} (P_n \psi_{i,n})\right| & \le \left( \int_{\Omega_n} ( \sum_{j \in I_2} P_n U_{j,n})^{\frac83} |P_n \psi_{i,n}|^\frac43 \right)^\frac34 \|\phi_{1,n}\|_{L^{4}} \\
& \leq C \left( \int_{\Omega_n} ( \sum_{j \in I_2} U_{j,n})^{\frac83} |\psi_{i,n}|^\frac43 \right)^\frac34 \|\phi_{1,n}\|_{H_0^1} + h.o.t. \\
& \le \frac{C}{\delta_{i,n}} \sum_{j \in I_2}\left( \int_{\Omega_n} U_{j,n}^{\frac83} U_{i,n}^\frac43 \right)^\frac34 + h.o.t. 
\end{split}
\end{equation}
as $n \to \infty$. The precise rate of the higher order terms ($h.o.t.$) does not play any role, and in any case can be derived using Lemmas \ref{lemma:estimates} and \ref{lemma:estimates2}. Moreover, the leading integral on the right hand side can be estimated using Lemma \ref{lem: interaction 1}, obtaining
\[
\int_{\Omega_n} U_{j,n}^{\frac83} U_{i,n}^\frac43 = \begin{cases} O \left( \left( \frac{\delta_{i,n}}{\delta_{j,n}}\right)^\frac43\right) & \text{if $i>j$} \\ O \left( \left( \frac{\delta_{j,n}}{\delta_{i,n}}\right)^\frac43\right) & \text{if $j>i$}.
\end{cases}
\]
Coming back to \eqref{eq phi test3.2}, we have
\begin{equation}\label{eq phi test4}
\begin{split}
\d_{i,n} \left| \int_{\Omega_n}  ( \sum_{j \in I_2} P_n U_{j,n})^{2} \phi_{1,n} (P_n \psi_{i,n})\right| & = \begin{cases}  O \left( \frac{\delta_{i,n}}{\delta_{j,n}}\right) = o(1) & \text{if $i>j$} \\
 O \left( \frac{\delta_{j,n}}{\delta_{i,n}}\right)= o(1) & \text{if $i<j$}
\end{cases}
\end{split}
\end{equation}
as $n \to \infty$, which proves that the second integral on the right hand side in \eqref{eq phi test} is of order $o(\d_{i,n})$. As far as the third integral is concerned, we note that
\begin{equation}\label{eq phi test3}
\begin{split}
\Bigg| \int_{\Omega_n}    ( \sum_{j \in I_1} P_n U_{j,n})   & ( \sum_{j \in I_2} P_n U_{j,n})\phi_{2,n} (P_n \psi_{i,n}) \Bigg| \\
&  \le \left( \int_{\Omega_n} ( \sum_{j \in I_1} P_n U_{j,n})^{\frac43}  ( \sum_{j \in I_2} P_n U_{j,n})^\frac43 |P_n \psi_{i,n}|^\frac43 \right)^\frac34 \|\phi_{2,n}\|_{L^{4}} \\
& \le C \left( \int_{\Omega_n} ( \sum_{j \in I_1} U_{j,n})^{\frac43}  ( \sum_{j \in I_2} U_{j,n})^\frac43 |\psi_{i,n}|^\frac43 \right)^\frac34 \|\phi_{2,n}\|_{H_0^1} + h.o.t. \\
& \le \frac{C}{\delta_{i,n}} \sum_{h \in I_1} \sum_{j \in I_2}\left( \int_{\Omega_n} U_{h,n}^{\frac43} U_{j,n}^\frac43 U_{i,n}^\frac43 \right)^\frac34  + h.o.t. \\
& = \frac{C}{\delta_{i,n}} o(1)
\end{split}
\end{equation}
as $n \to \infty$. The last inequality follows by Lemma \ref{lem: int a 3} if $h \neq i$, and by Lemma \ref{lem: interaction 1} if $h=i$. In any case
\[
\delta_{i,n} \left|\int_{\Omega_n}    ( \sum_{j \in I_1} P_n U_{j,n})   ( \sum_{j \in I_2} P_n U_{j,n})\phi_{2,n} (P_n \psi_{i,n})\right| = o(1)
\]
as $n \to \infty$. To sum up, by expanding \eqref{eq phi test}, we proved that for every index $i \in I_1$ it results that
\begin{equation}\label{791}
c_{i,n} (\sigma_0 + o(1)) + o(1) \sum_{\substack{j \in I_1 \\ j \neq i}} c_{j,n} = o(1)
\end{equation}
as $n \to \infty$. From this and by Cramer's rule, we deduce that $c_{ i,n}\to 0$ for every $i\in I_1$. From this, the conclusion $\|w_{1,n}\| \to 0$ follows.
\end{proof}

Let us set now $z_{i,n}:= \phi_{i,n} - h_{i,n} - w_{i,n}$. Notice that, since $\|h_{i,n}\|_{H^1_0(\Omega_n)}, \|w_{i,n}\|_{H^1_0(\Omega_n)} \to 0$, we have $\|z_{1,n}\|_{H^1_0(\Omega_n)}^2 + \|z_{2,n}\|_{H^1_0(\Omega_n)}^2 \to 1$. In terms of $z_{i,n}$, equation \eqref{eq phi} can be rewritten as
\begin{equation}\label{eq zi}
\begin{split}
z_{1,n}    =  \Ical^*\Bigg\{ & \Big[3(\sum_{j \in I_1}P_n U_{j,n})^{2}    + \beta (\sum_{j \in I_2} P_n U_{j,n})^2\Big]  (z_{1,n} +h_{1,n} + w_{1,n})  \\
&+2\beta (\sum_{j \in I_1} P_n U_{j,n}) (\sum_{j \in I_2} P_n U_{j,n})  (z_{2,n} + h_{2,n} + w_{2,n})  \Bigg\}. \end{split}
\end{equation}
Of course, a similar equation holds for $z_{2,n}$.

\begin{lemma}\label{lem: step 2}
It results that at least one of the following lower estimates holds:
\begin{multline*}
\liminf_{n \to \infty} \left\{ \int_{\Omega_n} \bigg[3(\sum_{j \in I_1}P_n U_{j,n})^{2}    + \beta (\sum_{j \in I_2} P_n U_{j,n})^2\bigg] z_{1,n}^2 \right. \\
\left. + 2 \beta
\int_{\Omega_n} (\sum_{j \in I_1} P_n U_{j,n}) (\sum_{j \in I_2} P_n U_{j,n}) z_{1,n}  z_{2,n} \right\} >0,
\end{multline*}
or
\begin{multline*}
\liminf_{n \to \infty} \left\{ \int_{\Omega_n} \bigg[3(\sum_{j \in I_2}P_n U_{j,n})^{2}    + \beta (\sum_{j \in I_1} P_n U_{j,n})^2\bigg] z_{2,n}^2 \right. \\
\left. + 2 \beta
\int_{\Omega_n} (\sum_{j \in I_1} P_n U_{j,n}) (\sum_{j \in I_2} P_n U_{j,n}) z_{1,n}  z_{2,n} \right\} >0.
\end{multline*}
\end{lemma}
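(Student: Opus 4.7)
The plan is to test each equation of type \eqref{eq zi} against $z_{i,n}$ itself in $H^1_0(\Omega_n)$, exploit the defining property of $\Ical^*$, and show that the contributions of $h_{i,n}$ and $w_{i,n}$ disappear in the limit; the positivity of the remaining quadratic forms then follows from $\|z_{1,n}\|_{H^1_0}^2+\|z_{2,n}\|_{H^1_0}^2\to 1$.

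More precisely, recall that for $v = \Ical^* g$ one has $\langle v,\varphi\rangle_{H^1_0}=\int_{\Omega_n} g\varphi$ for every $\varphi\in H^1_0(\Omega_n)$. Testing \eqref{eq zi} against $z_{1,n}$ (and the analogue for $z_{2,n}$) yields, for $i=1,2$ and $j \neq i$,
\begin{equation*}
\begin{split}
\|z_{i,n}\|_{H^1_0}^2 &= \int_{\Omega_n}\!\!\Big[3(\sum_{l\in I_i}P_n U_{l,n})^2+\beta(\sum_{l\in I_j}P_n U_{l,n})^2\Big]z_{i,n}^2 \\
&\qquad + 2\beta\int_{\Omega_n}\!(\sum_{l\in I_1}P_n U_{l,n})(\sum_{l\in I_2}P_n U_{l,n})\,z_{1,n}z_{2,n} + E_{i,n},
\end{split}
\end{equation*}
where $E_{i,n}$ collects all the terms in which $z_{i,n}$ is multiplied by $h_{1,n}+w_{1,n}$ or $h_{2,n}+w_{2,n}$. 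I would then estimate $E_{i,n}$ by H\"older's inequality in the form
\[
\Big|\int_{\Omega_n}(\sum_{l}P_n U_{l,n})^2 (h_{i,n}+w_{i,n})\,z_{i,n}\Big|\le \|\textstyle\sum_{l}P_n U_{l,n}\|_{L^4}^2 \, \|h_{i,n}+w_{i,n}\|_{L^4}\,\|z_{i,n}\|_{L^4},
\]
and similarly for the mixed term. By the Sobolev embedding $H^1_0\hookrightarrow L^4$, one has $\|P_n U_{l,n}\|_{L^4}\le C \|P_n U_{l,n}\|_{H^1_0}\le C$ and $\|z_{i,n}\|_{L^4}\le C$, while $\|h_{i,n}+w_{i,n}\|_{L^4}\le C\|h_{i,n}+w_{i,n}\|_{H^1_0}\to 0$ by hypothesis and by the previous lemma. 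Hence $E_{i,n}=o(1)$ as $n\to\infty$.

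Consequently, both quantities in the statement of the lemma equal $\|z_{i,n}\|_{H^1_0}^2 + o(1)$. Since $\|z_{1,n}\|_{H^1_0}^2+\|z_{2,n}\|_{H^1_0}^2\to 1$, up to a subsequence (which we may extract, since the subsequent contradiction argument only needs one) at least one of $\|z_{i,n}\|_{H^1_0}^2$ stays bounded away from $0$ (in fact, $\ge 1/2+o(1)$ for some $i\in\{1,2\}$). For that index, the corresponding liminf is bounded below by $1/2>0$, which is exactly the claim.

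\textbf{Main difficulty.} The only slightly delicate point is the uniform $L^4$-boundedness of $\sum_{l\in I_i} P_n U_{l,n}$ with respect to $n$, because the concentration rates $\delta_{l,n}$ go to $0$ at different speeds; however this is standard since $\|P_n U_{l,n}\|_{H^1_0}\le \|U_{l,n}\|_{\mathcal{D}^{1,2}(\R^4)}$ is a scale-invariant quantity. All remaining estimates are soft and do not require the finer interaction lemmas used in the previous step.
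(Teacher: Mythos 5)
Your proof is correct and follows essentially the same route as the paper's: test \eqref{eq zi} against $z_{i,n}$, show the terms involving $h_{i,n}$ and $w_{i,n}$ disappear, and then use $\|z_{1,n}\|^2+\|z_{2,n}\|^2\to 1$ together with a subsequence extraction. The only difference is cosmetic: the paper disposes of the error terms by citing \cite[Formula (5.12)]{GeMuPi} (stating they vanish), whereas you give a self-contained H\"older--Sobolev bound showing they are $o(1)$, which is equally sufficient.
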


\begin{proof}
Since $\|z_{1,n}\|_{H^1_0(\Omega_n)}^2 + \|z_{2,n}\|_{H^1_0(\Omega_n)}^2 \to 1$, we can suppose that up to a subsequence $\{\|z_{1,n}\|_{H^1_0(\Omega_n)}^2\}_n$ or $\{\|z_{2,n}\|_{H^1_0(\Omega_n)}^2\}_n$ is uniformly bounded from below by $1/2$. Suppose for instance that $\{\|z_{1,n}\|^2_{H^1_0(\Omega_n)}\}_n$ is bounded from below. Then we test equation \eqref{eq zi} with $z_{1,n}$, obtaining
\begin{equation}\label{eq zi 2}
\begin{split}
\|z_{1,n}\|_{H^1_0(\Omega_n)}^2  &  =  \int_{\Omega_n}\bigg[3(\sum_{j \in I_1}P_n U_{j,n})^{2}    + \beta (\sum_{j \in I_2} P_n U_{j,n})^2\bigg] z_{1,n}^2 \\
& + 2 \beta
\int_{\Omega_n} (\sum_{j \in I_1} P_n U_{j,n}) (\sum_{j \in I_2} P_n U_{j,n}) z_{1,n}  z_{2,n} \\
& + 3 \int_{\Omega_n}(\sum_{j \in I_1}P_n U_{j,n})^{2} (h_{1,n} + w_{1,n})z_{1,n} + \beta (\sum_{j \in I_2} P_n U_{j,n})^2 (h_{1,n} + w_{1,n}) z_{1,n} \\
&+2\beta \int_{\Omega_n} (\sum_{j \in I_1} P_n U_{j,n}) (\sum_{j \in I_2} P_n U_{j,n})  (h_{2,n} + w_{2,n}) z_{1,n}.
\end{split}
\end{equation}
Arguing as in \cite[Formula (5.12)]{GeMuPi}, we can easily check that the last two integrals are $0$. Therefore, in this case the first $\liminf$ in the thesis is positive. If $\{\|z_{2,n}\|_{H^1_0(\Omega_n)}^2\}_n$ is bounded from below, in the same way we find that the second $\liminf$ is positive.
\end{proof}

We aim to obtain a contradiction with Lemma \ref{lem: step 2}. To this end, we fix $\rho>0$ so that $B_\rho \subset \subset \Omega$, and we decompose $B_\rho \setminus \overline{B_{\eps_n}}$ into the union of disjoint annuli as follows:
\[
B_\rho \setminus \overline{B_{\eps_n}} = \bigcup_{\ell=1}^k \cA_{\ell,n}, \quad \text{where} \quad \cA_{\ell,n} = B_{\sqrt{\delta_{\ell,n} \delta_{\ell-1,n}}}  \setminus \overline{B_{\sqrt{\delta_{\ell,n} \delta_{\ell+1,n}}}} \quad \text{for $\ell=1,\dots,k$},
\]
with the convention $\delta_{0,n} = \delta_{1,n}^{-1} \rho^2$ and $\delta_{k+1,n} = \delta_{k,n}^{-1} \eps_n^2$. Recall from Remark \ref{rem:ratesrelations} that $\delta_{l+1,n}/\delta_{l,n}\to 0$ as $n\to \infty$. We also set
\[
\cB_{\ell,n} = B_{2\sqrt{\delta_{\ell,n} \delta_{\ell-1,n}}}  \setminus \overline{B_{\sqrt{\delta_{\ell,n} \delta_{\ell+1,n}}/2}},
\]
and, for every $\ell=1,\dots,k$, we define a cut-off function $\chi_{\ell,n} \in C^\infty_c(\R^N)$ with the properties that
\begin{equation}\label{chi prop}
\begin{cases}
\chi_{\ell,n} = 1 \quad \text{in $\cA_{\ell,n}$}, \quad \chi_{\ell,n} = 0 \quad \text{in $\R^4\setminus \cB_{\ell,n}$}, \\
 |\nabla \chi_{\ell,n}| \le \frac{C}{\sqrt{\delta_{\ell,n} \delta_{\ell-1,n }}}, \quad |D^2 \chi_{\ell,n}| \le \frac{C}{\delta_{\ell,n} \delta_{\ell-1,n }} \quad \text{in $\R^4$},
\end{cases}
\end{equation}
for a positive universal constant $C$. Finally, we define for $\ell=1,\dots,k$ and $i=1,2$ the $\cD^{1,2}(\R^4)$ functions
\[
\hat z_{i,n}^\ell(x):= \delta_{\ell,n} z_{i,n}( \delta_{\ell,n} x) \chi_{\ell,n}(\delta_{\ell,n} x) \quad \text{for }x \in \frac{\cB_{\ell,n}}{\delta_{\ell,n}}=: \hat \cB_{\ell,n}, 
\]
naturally extended by $0$ in $\R^4 \setminus \hat \cB_{\ell,n}$. We have $\hat z_{i,n}^\ell(x)= \delta_{\ell,n} z_{i,n}( \delta_{\ell,n} x)$ if $x \in \tilde \cA_{\ell,n}:= \cA_{\ell,n}/\delta_{\ell,n}$.

\begin{lemma}\label{lem: step 3}
It results that $\hat z_{i,n}^\ell \to 0$ weakly in $\cD^{1,2}(\R^4)$, and strongly in $L^q_{\loc}(\R^4)$, for every $q \in [2,2^*)$, for every $i  = 1,2$, $\ell =1,\dots,k$. 
\end{lemma}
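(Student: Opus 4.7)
The plan is a concentration--compactness blowup argument: I would first establish uniform $\mathcal{D}^{1,2}$-boundedness of the rescaled sequence, then identify the linearized equation satisfied by any weak limit $\hat z_i^\ell$, extend it across the singular origin by removability, and finally force $\hat z_i^\ell\equiv 0$ via either the sign of $\beta$ (when $\ell\notin I_i$) or an orthogonality condition inherited from the kernel-orthogonality of $\phi_{i,n}$ (when $\ell\in I_i$).

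\emph{Compactness.} With $\tilde z_{i,n}(x):=\delta_{\ell,n}z_{i,n}(\delta_{\ell,n}x)$ and $\tilde\chi_{\ell,n}(x):=\chi_{\ell,n}(\delta_{\ell,n}x)$, so that $\hat z_{i,n}^\ell=\tilde z_{i,n}\tilde\chi_{\ell,n}$, scale invariance of the Dirichlet energy in $\R^4$ yields $\|\nabla\tilde z_{i,n}\|_{L^2(\R^4)}=\|\nabla z_{i,n}\|_{L^2(\Omega_n)}\le C$, while \eqref{chi prop} combined with H\"older and the Sobolev embedding $\mathcal{D}^{1,2}(\R^4)\hookrightarrow L^4(\R^4)$ bounds the cutoff cross term $\int\tilde z_{i,n}^2|\nabla\tilde\chi_{\ell,n}|^2$. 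Choosing $\chi_{\ell,n}$ radial, the sequence $\hat z_{i,n}^\ell$ inherits the symmetry of $z_{i,n}$, so up to subsequences it converges to some $\hat z_i^\ell$ weakly in $\mathcal{D}^{1,2}_s(\R^4)$ and strongly in $L^q_{\loc}(\R^4)$ for every $q\in[2,2^*)$ by Rellich--Kondrachov.

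\emph{Limit equation.} Fix $\varphi\in C^\infty_c(\R^4\setminus\{0\})$; by Remark \ref{rem:ratesrelations}, for $n$ large $\mathrm{supp}(\varphi)\subset \cA_{\ell,n}/\delta_{\ell,n}$, hence $\hat z_{i,n}^\ell=\tilde z_{i,n}$ on $\mathrm{supp}(\varphi)$ and $-\Delta\hat z_{i,n}^\ell=\delta_{\ell,n}^3 F_{i,n}(\delta_{\ell,n}\cdot)$ there, with $F_{i,n}$ the right-hand side of \eqref{eq zi}. The rescaling identity $\delta_{\ell,n}U_{\delta_{j,n}}(\delta_{\ell,n}x)=U_{\delta_{j,n}/\delta_{\ell,n}}(x)$ and the universal bound $U_\lambda(x)\le \alpha_4/(2|x|)$ (belonging to $L^p_{\loc}(\R^4)$ for every $p<4$) provide a uniform integrable dominant: $\delta_{\ell,n}P_nU_{j,n}(\delta_{\ell,n}\cdot)\to U_1$ in $L^p_{\loc}$ (any $p<4$) if $j=\ell$, and $\to 0$ otherwise, with the projection errors $P_nU_{j,n}-U_{j,n}$ absorbed by standard estimates. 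Since $\tilde h_{i,n},\tilde w_{i,n}\to 0$ in $\mathcal{D}^{1,2}$ and by the combinatorial condition (5), at scale $\delta_{\ell,n}$ only one side of the partition produces a leading bubble, the cross factor $(\delta_{\ell,n}\sum_{I_1}P_nU_{j,n}(\delta_{\ell,n}\cdot))(\delta_{\ell,n}\sum_{I_2}P_nU_{j,n}(\delta_{\ell,n}\cdot))$ vanishes in the limit and decouples the limiting system; passing term by term gives
\[
-\Delta \hat z_i^\ell = \begin{cases} 3\,U_1^2\,\hat z_i^\ell & \text{if } \ell\in I_i,\\ \beta\,U_1^2\,\hat z_i^\ell & \text{if } \ell\notin I_i, \end{cases}
\]
first in $\mathcal{D}'(\R^4\setminus\{0\})$, and then in $\mathcal{D}'(\R^4)$ since $\{0\}$ has zero $H^1$-capacity in $\R^4$.

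\emph{Rigidity and main obstacle.} If $\ell\notin I_i$, testing the limit equation with $\hat z_i^\ell$ itself gives $\int_{\R^4}|\nabla\hat z_i^\ell|^2=\beta\int_{\R^4} U_1^2(\hat z_i^\ell)^2\le 0$ (as $\beta<0$), whence $\hat z_i^\ell\equiv 0$. If $\ell\in I_i$, the classical nondegeneracy result \cite{BianchiEgnell}, restricted to the symmetric class, forces $\hat z_i^\ell=c\,\psi_1$ with $\psi_1:=\partial_\delta U_\delta|_{\delta=1}$. The orthogonality $\phi_{i,n}\perp_{H^1_0}\delta_{\ell,n}P_n\psi_{\ell,n}$, combined with $h_{i,n}\in K_{i,n}^\perp$, $\|w_{i,n}\|_{H^1_0}\to 0$ and the uniform bound $\|\delta_{\ell,n}P_n\psi_{\ell,n}\|_{H^1_0}\le C$, yields $\langle z_{i,n},\delta_{\ell,n}P_n\psi_{\ell,n}\rangle_{H^1_0}\to 0$; the identity $\delta^2\psi_\delta(\delta x)=\psi_1(x)$ and the scale invariance of the $H^1_0$-inner product translate this into $\int_{\R^4}\nabla\hat z_i^\ell\cdot\nabla\psi_1=0$ in the limit, forcing $c=0$. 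The main obstacle is the limit-equation step: the rescaled interaction terms and projection corrections must be controlled in a topology strong enough to converge against the test function. The universal bound $U_\lambda\le\alpha_4/(2|x|)$, uniform in $\lambda$, is precisely what makes dominated convergence work, while condition (5) in the partition guarantees the clean decoupling of the limiting system into two scalar equations.
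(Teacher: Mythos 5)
Your proof is correct and follows essentially the same route as the paper: uniform $\cD^{1,2}$-bound via scale invariance of the Dirichlet energy together with the cutoff estimates \eqref{chi prop}, identification of the decoupled limit equation by testing against $\varphi\in C^\infty_c(\R^4\setminus\{0\})$ (the paper does the bookkeeping via the explicit expansion \eqref{26101} rather than your uniform dominant $U_\lambda\le\alpha_4/(2|x|)$, but the content is the same), removal of the singularity at the origin, and rigidity via orthogonality to $\psi_{1,0}$ when $\ell\in I_i$ or via the sign of $\beta$ when $\ell\notin I_i$. The one small inaccuracy is attributing the vanishing of the cross factor to condition (5): at scale $\delta_{\ell,n}$ only $j=\ell$ produces a nontrivial limit and $\ell$ lies in exactly one of $I_1,I_2$, so the decoupling follows merely from the disjointness of the partition, whereas condition (5) is needed elsewhere (e.g.\ in the interaction-energy and remainder expansions).
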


\begin{proof}
We have
\[
\nabla \hat z_{i,n}^\ell(x) = \delta_{\ell,n}^2 \left[ \chi_{\ell,n}(\delta_{\ell,n} x) \nabla z_{i,n}(\delta_{\ell,n} x) +  z_{i,n}(\delta_{\ell,n} x) \nabla \chi_{\ell,n}(\delta_{\ell,n} x) \right]
\]
and
\begin{equation}\label{delta z}
\begin{split}
\Delta \hat z_{i,n}^\ell(x) = \delta_{\ell,n}^3 \big[\chi_{\ell,n}(\delta_{\ell,n} x) \Delta z_{i,n}(\delta_{\ell,n} x) & + 2  \nabla z_{i,n}(\delta_{\ell,n} x) \cdot \nabla \chi_{\ell,n}(\delta_{\ell,n} x)  \\
&  +  z_{i,n}(\delta_{\ell,n} x) \Delta \chi_{\ell,n}(\delta_{\ell,n} x) \big]
\end{split}
\end{equation}
for $x \in \tilde \cB_{\ell,n}$, that is,
\[
\frac{1}{2} \sqrt{ \frac{\d_{\ell+1,n}}{\d_{\ell,n}}} < |x| < 2 \sqrt{ \frac{\d_{\ell-1,n}}{\d_{\ell,n}}}.
\]
Notice that $\tilde \cB_{\ell,n}$ exhausts $\R^N$ as $n \to \infty$, by Remark \ref{rem:ratesrelations}. Now
\begin{align*}
\int_{\R^4} |\nabla \hat z_{i,n}^\ell|^2 & \le 2\delta_{\ell,n}^4  \int_{\tilde \cB_{\ell,n}} \left(|\nabla z_{i,n}(\delta_{\ell,n} x)|^2 + z_{i,n}^2(\delta_{\ell,n} x) |\nabla \chi_{\ell,n}(\delta_{\ell,n}x)|^2\right) \,dx \\
& =  2 \int_{\cB_{\ell,n}} \left(|\nabla z_{i,n}(y)|^2 + z_{i,n}^2(y) |\nabla \chi_{\ell,n}(y)|^2\right) \,dy.
\end{align*}
The integral of $|\nabla z_{i,n}|^2$ is clearly bounded, since $\|z_{i,n}\|_{H^1_0(\Omega_n)} \le 1$. Also, by \eqref{chi prop},
\begin{align*}
\int_{\cB_{\ell,n}} z_{i,n}^2 |\nabla \chi_{\ell,n}|^2  & \le \frac{C}{\delta_{\ell,n} \delta_{\ell-1,n}}  \int_{\cB_{\ell,n}} z_{i,n}^2 \le  \frac{C}{\delta_{\ell,n} \delta_{\ell-1,n}}  |\cB_{\ell,n}|^\frac12 \|z_{i,n}\|_{H^1_0(\Omega_n)}^2 \\
						&\le C \delta_{\ell,n} \delta_{\ell-1,n}  \|z_{i,n}\|_{H^1_0(\Omega_n)}^2 \le C,
\end{align*}
and we infer that $\|\hat z_{i,n}^\ell\|_{\cD^{1,2}(\R^4)} \le C$. Then, up to a subsequence, we have that $\hat z_{i,n}^\ell \rightharpoonup \hat z_i^\ell$ weakly in $\cD^{1,2}$, and $\hat z_{i,n}^\ell \to \hat z_i^\ell$ strongly in $L^q_{\loc}(\R^4)$ for $q \in [2,2^*)$. The equation satisfied by the weak limit can be determined using \eqref{eq zi} and \eqref{delta z}: for every $\varphi \in C^\infty_c(\R^4 \setminus \{0\})$, by combining \eqref{eq zi} with \eqref{delta z} we have that
\begin{align*}
\int_{\R^N} &\nabla \hat z_{1,n}^\ell \cdot \nabla \varphi \\
& = \delta_{\ell,n}^3 \int_{\tilde \cB_{\ell,n}} \chi_{\ell,n}(\delta_{\ell,n} x) \left[ 3 \left( \sum_{i \in I_1} P_n U_{j,n}(\delta_{\ell,n} x)\right)^2+ \beta \left( \sum_{j \in I_2} P_n U_{j,n}(\delta_{\ell,n} x)\right)^2\right]   \\
& \hphantom{\delta_{\ell,n}^3 \int_{\tilde \cB_{\ell,n}} \chi_{\ell,n}(\delta_{\ell,n} x) \qquad} \cdot \left( z_{1,n}(\delta_{\ell,n} x) + h_{1,n}(\delta_{\ell,n} x) + w_{1,n}(\delta_{\ell,n} x)\right) \varphi(x) \,dx \\
& + 2\beta \delta_{\ell,n}^3 \int_{\tilde \cB_{\ell,n}} \chi_{\ell,n}(\delta_{\ell,n} x) \left( \sum_{i \in I_1} P_n U_{j,n}(\delta_{\ell,n} x)\right)\left( \sum_{i \in I_2} P_n U_{j,n}(\delta_{\ell,n} x)\right)\\
& \hphantom{ + 2\beta \delta_{\ell,n}^3 \int_{\tilde \cB_{\ell,n}} \chi_{\ell,n}(\delta_{\ell,n} x) \qquad} \cdot  (z_{2,n}(\delta_{\ell,n} x) + h_{2,n}(\delta_{\ell,n} x) + w_{2,n}(\delta_{\ell,n} x)) \varphi(x)\,dx \\
& -  \delta_{\ell,n}^3 \int_{\tilde \cB_{\ell,n}} \left( 2\nabla \chi_{\ell,n}(\delta_{\ell,n} x) \cdot \nabla z_{1,n}(\delta_{\ell,n} x) + z_{1,n}(\delta_{\ell,n} x) \Delta \chi_{\ell,n}(\d_{\ell,n} x)\right) \varphi(x)\,dx.
\end{align*}
The last integral and all the terms involving $h_{i,n}$ and $w_{i,n}$ tend to $0$ as $n \to \infty$, exactly as in \cite[Formula (5.20)]{GeMuPi}. Therefore,
\begin{align*}
&\int_{\R^N} \nabla \hat z_{1,n}^\ell \cdot \nabla \varphi = o(1)  \\
& + \delta_{\ell,n}^3 \int_{\tilde \cB_{\ell,n}} \chi_{\ell,n}(\delta_{\ell,n} x) \left[ 3 \left( \sum_{i \in I_1} P_n U_{j,n}(\delta_{\ell,n} x)\right)^2+ \beta \left( \sum_{j \in I_2} P_n U_{j,n}(\delta_{\ell,n} x)\right)^2\right]   z_{1,n}(\delta_{\ell,n} x)  \varphi(x) \,dx \\
& + 2\beta \delta_{\ell,n}^3 \int_{\tilde \cB_{\ell,n}} \chi_{\ell,n}(\delta_{\ell,n} x) \left( \sum_{i \in I_1} P_n U_{j,n}(\delta_{\ell,n} x)\right)\left( \sum_{i \in I_2} P_n U_{j,n}(\delta_{\ell,n} x)\right)z_{2,n}(\delta_{\ell,n} x)  \varphi(x)\,dx \\
& = o(1)  + \delta_{\ell,n}^2 \int_{\tilde \cB_{\ell,n}} 3 \left( \sum_{i \in I_1} P_n U_{j,n}(\delta_{\ell,n} x)\right)^2 \hat z_{1,n}^\ell(x)  \varphi(x)\,dx \\
& + \beta \delta_{\ell,n}^2 \int_{\tilde \cB_{\ell,n}}  \left( \sum_{j \in I_2} P_n U_{j,n}(\delta_{\ell,n} x)\right)^2  \hat z_{1,n}(x)  \varphi(x) \,dx \\
& + 2\beta \delta_{\ell,n}^2 \int_{\tilde \cB_{\ell,n}}  \left( \sum_{i \in I_1} P_n U_{j,n}(\delta_{\ell,n} x)\right)\left( \sum_{i \in I_2} P_n U_{j,n}(\delta_{\ell,n} x)\right)\hat z_{2,n}^\ell(x)  \varphi(x)\,dx \\
& =: o(1) + A_1 + A_2 + A_3.
\end{align*}
In order to study the behavior of the integrals as $n \to \infty$, it is convenient to observe (see Lemma \ref{lemma:estimates}) that, if $\ell \in I_1$, then
\begin{equation}\label{26101}
\begin{split}
\sum_{j \in I_1} P_n U_{j,n} (\delta_{\ell,n} x) & = \sum_{j \in I_1} U_{j,n}(\delta_{\ell,n} x) + h.o.t.  = \sum_{j \in I_1} \alpha_4 \frac{\delta_{j,n}}{\delta_{j,n}^2 + \delta_{\ell,n}^2 |x|^2} + h.o.t.\\
& = \frac{1}{\delta_{\ell,n}} U_{1,0}(x) + \sum_{\substack{j \in I_1 \\ j \neq \ell}} \alpha_4 \frac{\delta_{j,n}}{\delta_{j,n}^2 + \delta_{\ell,n}^2 |x|^2} + h.o.t. \\
& = \frac{1}{\delta_{\ell,n}} U_{1,0}(x) + \sum_{\substack{j \in I_1 \\ j < \ell}} O\left(\frac{1}{\delta_{j,n}}\right) + \sum_{\substack{j \in I_1 \\ j > \ell}} O\left(\frac{\delta_{j,n}}{\delta_{\ell,n}^2 |x|^2}\right) + h.o.t.,
\end{split}
\end{equation}
as $n \to \infty$. If instead $\ell\not \in I_1$, then we have a similar expansion, but without the term $U_{1,0}(x)/\delta_{\ell,n}$. We focus at first on the first possibility. We have, 
\begin{equation}\label{19101}
\begin{split}
A_1 &= 3\d_{\ell,n}^2 \int_{\tilde \cB_{\ell,n}} \left( \frac{U_{1,0}(x)}{\delta_{\ell,n}}\right)^2 \hat z_{1,n}^{\ell}(x) \varphi(x) + \left[ \sum_{\substack{j \in I_1 \\ j<\ell}}  O\left( \frac{1}{\d_{j,n}^2}\right) +  \sum_{\substack{j \in I_1 \\ j > \ell}} O\left( \frac{\d_{j,n}^2}{\d_{\ell,n}^4 |x|^4}\right)\right]  \hat z_{1,n}^{\ell}(x) \varphi(x)\, dx \\
&  + 3\d_{\ell,n}^2 \int_{\tilde \cB_{\ell,n}}  \frac{2}{\delta_{\ell,n}} U_{1,0}(x) \left[ \sum_{\substack{j \in I_1 \\ j<\ell}} O\left( \frac{1}{\d_{j,n}}\right) + \sum_{\substack{j \in I_1 \\ j > \ell}} O\left( \frac{\d_{j,n}}{\d_{\ell,n}^2 |x|^2}\right) \right]  \hat z_{1,n}^{\ell}(x) \varphi(x)\, dx \\
& + 3\d_{\ell,n}^2 \int_{\tilde \cB_{\ell,n}} 2 \sum_{\substack{i \in I_1 \\ i<\ell}} \sum_{\substack{j \in I_1 \\ j > \ell}} O \left(\frac{\d_{j,n}}{\d_{i,n} \delta_{\ell,n}^2 |x|^2}\right)  \hat z_{1,n}^{\ell}(x) \varphi(x)\, dx + h.o.t.
\end{split}
\end{equation} 
Now, for every $j<\ell$
\[
\frac{\d_{\ell,n}^2}{\d_{j,n}^2} \left| \int_{\tilde \cB_{\ell,n}} \hat z_{1,n}^{\ell} \varphi \right| \le o(1) \|\hat z_{1,n}^{\ell}\|_{L^4} \to 0,
\]
and, by Lemma \ref{lem: interaction 0},
\begin{align*}
\left| \frac{\d_{\ell,n}}{\d_{j,n}} \int_{\tilde \cB_{\ell,n}} U_{1,0} \hat z_{1,n}^\ell \varphi \right| & \le \frac{\d_{\ell,n}}{\d_{j,n}} \|U_{1,0}\|_{L^4} \|\hat z_{1,n}^\ell\|_{L^4} \|\varphi\|_{L^2} \to 0.
\end{align*}
Moreover, for every $j>\ell$, using the fact that $\supp \varphi \subset B_R \setminus B_\rho$ for suitable $0<\rho<R$, we have that
\begin{align*}
\frac{\d_{j,n}^2}{\d_{\ell,n}^2} \left| \int_{\tilde \cB_{\ell,n}} \hat z_{1,n}^{\ell}(x) \frac{\varphi(x)}{|x|^4}\, dx \right| &\le  C \frac{\d_{j,n}^2}{\d_{\ell,n}^2} \|\hat z_{1,n}^{\ell}\|_{L^4} \left( \int_{B_R \setminus B_{\rho}} \frac{dx}{|x|^\frac{16}{3}}\right)^\frac34 \le  C \frac{\d_{j,n}^2}{\d_{\ell,n}^2}  \to 0,
\end{align*}
and that
\begin{align*}
\frac{\d_{j,n}}{\d_{\ell,n}} \left| \int_{\tilde \cB_{\ell,n}} U_{1,0}(x) \hat z_{1,n}^{\ell}(x) \frac{\varphi(x)}{|x|^2} \, dx\right| &\le C\frac{\d_{j,n}}{\d_{\ell,n}} \| \hat z_{1,n}^{\ell}\|_{L^4} \left(\int_{B_R \setminus B_{\rho}} \frac{U_{1,0}^\frac43(x)}{|x|^\frac83}\, dx\right)^\frac34 \le C \frac{\d_{j,n}}{\d_{\ell,n}} \to 0.
\end{align*}
The previous estimates yield
\begin{equation}\label{19102}
A_1 = 3\int_{\tilde \cB_{\ell,n}}  U_{1,0}^2 \hat z_{1,n}^{\ell} \varphi + o(1) \to 3 \int_{\R^4} U_{1,0}^2 \hat z_1^{\ell} \varphi
\end{equation}
as $n \to \infty$, for every $\varphi \in C^\infty_c(\R^4 \setminus \{0\})$.

Notice that, in the above computations, we never used the fact that the indexes $j$ were in $I_1$. Therefore, we directly deduce that
\begin{equation}\label{19103}
A_2 \to 0 \qquad \text{as $n \to \infty$}.
\end{equation}
Finally, in an analogue way
\begin{equation}\label{19104}
\begin{split}
A_3 &= 2 \beta \d_{\ell,n}^2 \int_{\tilde \cB_{\ell,n}} \left[ \frac{1}{\delta_{\ell,n}} U_{1,0}(x) + \sum_{\substack{j \in I_1 \\ j < \ell}} O\left(\frac{1}{\delta_{j,n}}\right) + \sum_{\substack{j \in I_1 \\ j > \ell}} O\left(\frac{\delta_{j,n}}{\delta_{\ell,n}^2 |x|^2}\right)\right]  \cdot \\
& \hphantom{2 \beta \d_{\ell,n}^2 \int_{\tilde \cB_{\ell,n}} \qquad \quad }    \cdot \left[ \sum_{\substack{j \in I_2 \\ j < \ell}} O\left(\frac{1}{\delta_{j,n}}\right) + \sum_{\substack{j \in I_2 \\ j > \ell}} O\left(\frac{\delta_{j,n}}{\delta_{\ell,n}^2 |x|^2}\right) \right] \hat z_{1,n}^\ell(x) \varphi(x)\, dx+ h.o.t. \to 0
\end{split}
\end{equation}
as $n \to \infty$. 

Collecting together \eqref{19102}, \eqref{19103} and \eqref{19104}, and coming back to \eqref{19101}, we finally obtain that the weak limit of $\hat z_{1,n}^\ell$ satisfies
\[
-\Delta \hat z_1^\ell = 3 U_{1,0}^2 \hat z_1^\ell \qquad \text{in $\R^4 \setminus \{0\}$}.
\]
Let now $\theta_\rho \in C^\infty(\R^N)$ be such that $\theta_\rho \equiv 1$ in $B_{2\rho}^c$, $\theta_\rho \equiv 0$ in $B_\rho$, and $|\nabla \theta_\rho| \le C/\rho$; and let $\varphi \in C^\infty_c(\R^4)$; testing the above equation with $\theta_\rho \varphi$, and passing to the limit as $\rho \to 0^+$, using the fact that $\hat z_1^\ell \in \cD^{1,2}(\R^4)$ (since it is the weak limit of $\cD^{1,2}$ functions), we easily deduce that 
\[
-\Delta \hat z_1^\ell = 3 U_{1,0}^2 \hat z_1^\ell \qquad \text{in the whole space $\R^4$}.
\]
In order to show that $\hat z_1^{\ell} \equiv 0$, recalling that it is symmetric with respect to $0$, it is sufficient to verify that $\hat z_1^\ell \perp \psi_{1,0}$. This can be done exactly as in \cite[Formula (5.19)]{GeMuPi}, and completes the proof.

It still remains to analyze the case $\ell \not \in I_1$. In such a situation we can proceed exactly as before, but this time we end up with 
\[
-\Delta \hat z_1^\ell = \beta U_{1,0}^2 \hat z_1^\ell \qquad \text{in the whole space $\R^4$}.
\]
Since $\beta<0$ and $\hat z_1^\ell \in \cD^{1,2}(\R^4)$, we infer that
\[
0 \le \int_{\R^N} |\nabla \hat z_1^\ell|^2 = \beta \int_{\R^N} (U_{1,0} \hat z_1^\ell)^2 \le 0,
\] 
and the conclusion follows also in this case. 
\end{proof}

\begin{proof}[Conclusion of the proof of Proposition \ref{lem: linear part}]
Using Lemma \ref{lem: step 3}, we will obtain a contradiction with Lemma \ref{lem: step 2}. Let us consider
\begin{equation}\label{26102}
\int_{\Omega_n} (\sum_{j \in I_1}P_n U_{j,n})^{2} z_{1,n}^2 \le C \sum_{i \in I_1} \int_{\Omega_n} U_{i,n}^2 z_{1,n}^2 = C  \sum_{i \in I_1} \left(\int_{\Omega_n \setminus B_\rho}  U_{i,n}^2 z_{1,n}^2+ \sum_{\ell=1}^k \int_{\cA_{\ell,n}}  U_{i,n}^2 z_{1,n}^2\right).
\end{equation}
We show that the right hand side tends to $0$ as $n \to \infty$. At first, we have 
\begin{equation}\label{26103}
\int_{\Omega_n \setminus B_\rho}  U_{i,n}^2 z_{1,n}^2 \le C \d_{i,n}^2 \int_{\Omega_n \setminus B_\rho} z_{1,n}^2 \le C \d_{i,n}^2 \|z_{i,n}\|_{H^1_0(\Omega_n)}^2 \to 0.
\end{equation}
Now, let $i \neq \ell$. Then we have
\[
\int_{\cA_{\ell,n}}  U_{i,n}^2 z_{1,n}^2 \le C \|z_{i,n}\|_{H^1_0(\Omega_n)}^2 \left( \int_{\mathcal{A}_{\ell,n}} U_{i,n}^4 \right)^\frac12 = C \|z_{i,n}\|_{H^1_0(\Omega_n)}^2 \left( \int_{\mathcal{A}_{\ell,n}/\delta_{i,n}} U_{1,0}^4 \right)^\frac12.
\]
Since $i \neq \ell$, we have that 
\[
\frac{\mathcal{A}_{\ell,n}}{\delta_{i,n}} \subset \begin{cases} B\left(0, \frac{\sqrt{\d_{\ell-1,n} \d_{\ell,n}}}{\d_{i,n}}\right) & \text{if $i<\ell$} \\ 
\R^N \setminus B\left(0, \frac{\sqrt{\d_{\ell+1,n} \d_{\ell,n}}}{\d_{i,n}}\right) & \text{if $i>\ell$},
\end{cases}
\]
and 
\[
\frac{\sqrt{\d_{\ell-1,n} \d_{\ell,n}}}{\d_{i,n}}\to 0 \quad \text{if $i < \ell$, and} \quad \frac{\sqrt{\d_{\ell+1,n} \d_{\ell,n}}}{\d_{i,n}} \to +\infty \quad \text{if $i>\ell$}.
\]
Therefore, the fact that 
\begin{equation}\label{261031}
\int_{\cA_{\ell,n}}  U_{i,n}^2 z_{1,n}^2 \to 0 \quad \text{as $n \to \infty$}, \ \forall i \neq \ell
\end{equation}
follows from the integrability of $U_{1,0}^4$ on $\R^N$. If moreover $i = \ell$, recalling that $\tilde \cA_{\ell,n} = \cA_{\ell,n}/\delta_{\ell,n}$ we have
\begin{equation}\label{26104}
\int_{\cA_{\ell,n}}  U_{\ell,n}^2 z_{1,n}^2 = \d_{\ell,n}^2  \int_{\tilde \cA_{\ell,n}} U_{l,n}^2(\d_{\ell,n} x) (\hat z_{1,n}^\ell)^2(x)\,dx =  \alpha_4^2 \int_{\R^N} \left(\frac{1}{1+|x|^2}\right)^2 (\hat z_{1,n}^{\ell})^2(x) \, dx + o(1) \to 0
\end{equation}
as $n \to \infty$, since $U_{1,0}^2 \in L^2(\R^N)$ and $(\hat z_{1,n}^{\ell})^2  \rightharpoonup 0$ weakly in $L^{2}(\R^4)$ by Lemma \ref{lem: step 3}. By \eqref{26103} and \eqref{261031}, we obtain in \eqref{26102} that
\begin{equation}\label{26105}
\int_{\Omega_n} (\sum_{j \in I_1}P_n U_{j,n})^{2} z_{1,n}^2 \to 0 \qquad \text{as $n \to \infty$}.
\end{equation}
In a completely analogue way, we also have
\begin{equation}\label{26106}
\int_{\Omega_n} (\sum_{j \in I_2}P_n U_{j,n})^{2} z_{1,n}^2 \to 0 \qquad \text{as $n \to \infty$}.
\end{equation}
Finally, 
\begin{equation}\label{26107}
\Bigg|\int_{\Omega_n} (\sum_{j \in I_1} P_n U_{j,n}) (\sum_{j \in I_2} P_n U_{j,n}) z_{1,n}  z_{2,n}\Bigg| \le \left( \sum_{j \in I_1}\int_{\Omega_n} U_{j,n}^2  z_{1,n}^2\right)^2  \left( \sum_{j \in I_2}\int_{\Omega_n} U_{j,n}^2  z_{2,n}^2\right)^2 \to 0
\end{equation}
as $n \to \infty$. But \eqref{26102}, estimates \eqref{26105}, \eqref{26106} and \eqref{26107} imply that 
\begin{multline*}
\liminf_{n \to \infty} \left\{ \int_{\Omega_n} \bigg[3(\sum_{j \in I_1}P_n U_{j,n})^{2}    + \beta (\sum_{j \in I_2} P_n U_{j,n})^2\bigg] z_{1,n}^2 \right. \\
\left. + 2 \beta
\int_{\Omega_n} (\sum_{j \in I_1} P_n U_{j,n}) (\sum_{j \in I_2} P_n U_{j,n}) z_{1,n}  z_{2,n} \right\}  = 0,
\end{multline*}
in contradiction with Lemma \ref{lem: step 2}.
\end{proof}

\subsection{Estimates on the reminder term}

In this subsection we prove the following

\begin{proposition}\label{prop: stima rem}
Let $\eta>0$. There exists $\eps_0>0$ and $C>0$ such that
\[
\| R^i_{\mf{d},\eps} \| \le C \eps^\frac{1}{k+1} \left(\log\left(\frac{1}{\eps}\right)\right)^{-\frac{1}{k+1}},
\]
for $i=1,2$, for every $\textbf{d} \in X_\eta$ and for every $\eps \in (0,\eps_0)$.
\end{proposition}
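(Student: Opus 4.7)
The plan is to use the continuity of $\Ical^*: L^{4/3}(\Omega_\eps) \to H^1_0(\Omega_\eps)$ (uniformly in $\eps$) together with the fact that $\Pi_1^{\perp}$ is a contraction, so that bounding $\|R^1_{\mf{d},\eps}\|_{H^1_0}$ reduces to estimating
\[
\Bigl\| f\Bigl(\sum_{j \in I_1} P_\eps U_j\Bigr) - \sum_{j \in I_1} f(U_j) + \beta \Bigl(\sum_{j \in I_1} P_\eps U_j\Bigr)\Bigl(\sum_{j \in I_2} P_\eps U_j\Bigr)^{2} \Bigr\|_{L^{4/3}(\Omega_\eps)}.
\]
I would split the integrand into three pieces: \textbf{(a)} the intra-tower mixing $f(\sum_{j \in I_1} P_\eps U_j) - \sum_{j \in I_1} f(P_\eps U_j)$; \textbf{(b)} the single-bubble projection defect $\sum_{j \in I_1} (f(P_\eps U_j) - f(U_j))$; and \textbf{(c)} the cross-tower coupling $\beta (\sum_{I_1} P_\eps U_j)(\sum_{I_2} P_\eps U_j)^2$. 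For each piece one exploits the pointwise bound $0 \le P_\eps U_j \le U_j$ followed by the interaction lemmas already invoked in Subsection~\ref{subsec:linear}.

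For piece (a), since $f(s)=(s^+)^3$, the algebraic expansion $(\sum a_j)^3 - \sum a_j^3$ is a finite combination of cross terms of the type $U_i^2 U_j$ and $U_i U_j U_\ell$ with pairwise distinct indices in $I_1$. Condition (5) on the partition forces any two indices of $I_1$ to differ by at least $2$, so the most dangerous ratios are $\delta_{i+2}/\delta_i = O\bigl(\eps^{2/(k+1)}(\log(1/\eps))^{-2/(k+1)}\bigr)$, of strictly higher order than the claimed bound. Piece (b) is handled via $|f(P_\eps U_j)-f(U_j)| \le 3 U_j^2 (U_j-P_\eps U_j)$ coupled with the standard decomposition of $U_j - P_\eps U_j$ as the sum of its outer contribution, of order $\alpha_4 \delta_j H(x,0)$, plus an inner contribution from the hole $\partial B_\eps$, of order $\alpha_4 (\eps^2/\delta_j)/|x|^2$; a scaling computation then shows that the resulting $L^{4/3}$ norms are respectively $O(\delta_j^2 \tau(0))$ and $O(\eps^2/\delta_j^2)$, both $o(\eps^{1/(k+1)}(\log(1/\eps))^{-1/(k+1)})$ because of the specific choice \eqref{eq:rates}.

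Piece (c) is responsible for the leading order. After the pointwise bound by $|\beta| \sum_{i \in I_1, j, \ell \in I_2} U_i U_j U_\ell$, the dominant summands are those with $j=\ell=i\pm 1$, that is, a bubble of one tower paired with its immediate neighbour in the other tower. For these, an interaction estimate of Hardy--Sobolev type (with exponents $4/3$ and $8/3$, summing to $N=4$) gives
\[
\|U_i U_{i\pm 1}^2\|_{L^{4/3}(\Omega_\eps)} \sim \frac{\min(\delta_i,\delta_{i\pm 1})}{\max(\delta_i,\delta_{i\pm 1})} \sim \eps^{1/(k+1)}(\log(1/\eps))^{-1/(k+1)},
\]
which matches exactly the announced bound. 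All remaining interactions (triple products with three distinct scales, or pairs whose indices are non-consecutive) are of strictly higher order.

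The main technical obstacle lies in piece (b): one has to track simultaneously the two boundary contributions (from $\partial\Omega$ and from the shrinking hole $\partial B_\eps$) and verify that they are both dominated by the cross-tower leading order. The peculiar logarithmic correction $(\log(1/\eps))^{1/2 - j/(k+1)}$ in \eqref{eq:rates} is engineered precisely to guarantee this: the outer-boundary defect for the outermost bubble $U_1$ (of order $\delta_1^2 \tau(0)$) and the hole-boundary defect for the innermost bubble $U_k$ (of order $\eps^2/\delta_k^2$) are both rendered $o(\eps^{1/(k+1)}(\log(1/\eps))^{-1/(k+1)})$ by that precise power of $\log$, and this balancing is what forces the rates to take the form \eqref{eq:rates}.
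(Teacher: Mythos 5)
Your decomposition and estimates mirror the paper's proof closely: the same reduction via continuity of $\Ical^*$ and $\Pi_1^\perp$ to the $L^{4/3}$ norm, the same split into the intra-tower mixing $f(\sum P_\eps U_j) - \sum f(P_\eps U_j)$, the single-bubble projection defect $\sum ( f(P_\eps U_j) - f(U_j))$, and the cross-coupling $\beta(\sum_{I_1} P_\eps U_j)(\sum_{I_2} P_\eps U_j)^2$; and the same interaction lemmas to deduce that the first piece is $O(\eps^{2/(k+1)}(\log(1/\eps))^{-2/(k+1)})$ (using gap $\ge 2$ within each $I_i$), the second piece $O(\delta_1^2 + (\eps/\delta_k)^2)$, and the third piece $O(\eps^{1/(k+1)}(\log(1/\eps))^{-1/(k+1)})$ through the consecutive-index ratios $\delta_{j+1}/\delta_j$.

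One small imprecision: in piece (c) you assert that all triple products $U_i U_j U_\ell$ with $j \ne \ell$ (both in $I_2$) are of \emph{strictly} higher order. That is not always true. If $i \in I_1$ sits between $j = i-1$ and $\ell = i+1$ in $I_2$, then by Lemma \ref{lem: int a 3} the $L^{4/3}$ norm of $U_i U_j U_\ell$ is $O(\delta_i/\delta_{i-1})$, which is the \emph{same} order $\eps^{1/(k+1)}(\log(1/\eps))^{-1/(k+1)}$ as the diagonal terms, not a smaller one. The paper sidesteps this entirely: it applies Cauchy--Schwarz to $(\sum_{h\in I_2} P_\eps U_h)^2 \le |I_2| \sum_{h\in I_2} (P_\eps U_h)^2$ before estimating, so that only paired products $U_j U_h^2$ appear and genuine triple products with three distinct scales never arise. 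Since the proposition only requires an upper bound of order $\eps^{1/(k+1)}(\log(1/\eps))^{-1/(k+1)}$, your argument still closes, but the characterization of which terms are subdominant should be corrected (or replaced by the Cauchy--Schwarz shortcut).
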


\begin{proof}
We focus on $i=1$. By continuity of $\Pi_1^\perp$ and of $\mathcal{I}^*$, there exists $C>0$ such that
\begin{equation}\label{st R}
\begin{split}
\|R^1_{\mf{d},\eps}\| &\le C \Big\| f(\sum_{j \in I_1} P_\eps U_j) - \sum_{j \in I_1} f(U_j) + \beta (\sum_{j \in I_1} P_\eps U_j) (\sum_{j \in I_2} P_\eps U_j)^2 \Big\|_{L^{\frac43}} \\
& \le C \Big\| f(\sum_{j \in I_1} P_\eps U_j) - \sum_{j \in I_1} f(U_j)  \Big\|_{L^{\frac43}} + C \Big\| (\sum_{j \in I_1} P_\eps U_j) (\sum_{j \in I_2} P_\eps U_j)^2 \Big\|_{L^{\frac43}} \\
& =:C( \mathcal{A} + \mathcal{B}).
\end{split}
\end{equation}
We estimate separately $\mathcal{A}$ and $\mathcal{B}$. At first we note that 
\begin{equation}\label{st I1 0}
\begin{split}
\mathcal{A} \le \Big\| f(\sum_{j \in I_1} P_\eps U_j) - \sum_{j \in I_1} f(P_\eps U_j)  \Big\|_{L^{\frac43}} + \Big\| \sum_{j \in I_1} ( f(P_\eps U_j) -f(U_j))  \Big\|_{L^{\frac43}} =: A_1 + A_2.
\end{split}
\end{equation}
Recalling that $f(s) = (s^+)^3$, and using the fact that 
\[
(a_1 + \cdots + a_n)^3 \le \left( a_1^3 + \dots + a_n^3\right) + C_n \sum_{1 \le j \neq h \le n} a_j^2 a_h
\]
for a positive constant $C_n$ depending only on $n$, we obtain
\begin{equation}\label{st I1 1}
\begin{split}
A_1^\frac43 & \le C \int_{\Omega_\eps} \Big|\sum_{\substack{j \neq h \\ j,h \in I_1}} (P_\eps U_j)^2 (P_\eps U_h) \Big|^\frac43 \le C \sum_{\substack{j \neq h \\ j,h \in I_1}} \int_{\Omega_\eps} (U_j^2 U_h)^\frac43.
\end{split}
\end{equation}
Let us fix $j \neq h$. Then, by Lemma \ref{lem: interaction 1}, 
\begin{equation}\label{st I1 2}
\begin{split}
\int_{\Omega_\eps} (U_j^2 U_h)^\frac43 \le \begin{cases}  C\left( \frac{\d_h}{\d_j}\right)^\frac43  & \text{if $h >j$} \\
 C\left( \frac{\d_j}{\d_h}\right)^\frac43  & \text{if $h <j$}.
\end{cases}
\end{split}
\end{equation}
Recalling \eqref{eq:rates}, we see that if $h>j$ and $\mf{d} \in X_\eta$
\[
\frac{\d_h}{\d_j} \le C\left( \eps^\frac{h-j}{k+1} \left( \log\left(\frac1{\eps}\right)\right)^{-\frac{h-j}{k+1}} \right) \le C \eps^\frac{2}{k+1} \left(\log \frac{1}{\eps}\right)^{-\frac{2}{k+1}},
\]
where $C$ denotes a positive constant depending on $\eta$ (but not on $\mf{d}$) and we used the fact that $h-j \ge 2$ since $j, h \in I_1$ with $j \neq h$. The same estimate holds in case $h<j$. Plugging this into \eqref{st I1 2}, and coming back to \eqref{st I1 1}, we finally conclude that 
\begin{equation}\label{st I1 21}
A_1 \le C \eps^\frac{2}{k+1} \left(\log \frac{1}{\eps}\right)^{-\frac{2}{k+1}}\le C \eps^\frac{1}{k+1} \left(\log \frac{1}{\eps}\right)^{-\frac{1}{k+1}} .
\end{equation}
Regarding $A_2$, we have
\begin{equation}\label{st I1 3}
\begin{split}
A_2 \le \sum_{j \in I_1} \left\| (P_\eps U_j)^3 - U_j^3\right\|_{L^\frac43} \le \sum_{j \in I_1} \left( \left\| (U_j -P_\eps  U_j)^3\right\|_{L^\frac43} + C \left\| U_j^2 (U_j - P_\eps U_j)\right\|_{L^\frac43} \right).
\end{split}
\end{equation}
Using the estimate for $U_i - P_\eps U_i$ contained in Lemma \ref{lemma:estimates}, we deduce that 
\begin{equation}\label{st I1 4}
\begin{split}
\left\| (U_j -P_\eps  U_j)^3\right\|_{L^\frac43}^\frac43 &= \int_{\Omega_\eps} |U_j - P_\eps U_j|^4 \le C \int_{\Omega_\eps} \left( \d_j^4 + \left(\frac{\eps}{\d_j}\right)^8 \frac{\d_j^4}{|x|^8} \right) \\
& \le C \d_j^4 + C \frac{\eps^8}{\d_j^4} \int_{\eps}^R r^{-5}\,dr  \le C \left( \d_j^4 + \left(\frac{\eps}{\d_j}\right)^4\right).
\end{split}
\end{equation}
In a similar way
\begin{equation}\label{st I1 5}
\begin{split}
\big\| U_j^2 &(U_j -P_\eps  U_j)\big\|_{L^\frac43}^\frac43  \le C \int_{\Omega_\eps} \left( \d_j^\frac{4}{3} U_j^\frac{8}3(x) +  \d_j^\frac43\left(\frac{\eps}{\d_j} \frac{U_j(x)}{|x|} \right)^\frac83 \right)\, dx \\
& \le C \d_j^\frac{8}3 \int_{\Omega_\eps/\d_j} \frac{dx}{(1+|x|^2)^\frac83} + C \left( \frac{\eps}{\d_j}\right)^\frac83 \int_{\Omega_\eps/\d_j} \frac{dx}{(1+|x|^2)^\frac83 |x|^\frac83} \le C \left(\d_j^\frac83 +  \left( \frac{\eps}{\d_j}\right)^\frac83\right).
\end{split}
\end{equation}
Plugging \eqref{st I1 4} and \eqref{st I1 5} into \eqref{st I1 3}, and recalling again Remark \ref{rem:ratesrelations}, we obtain
\begin{equation}\label{st I1 6}
\begin{split}
A_2 \le C \sum_{j \in I_1} \left(\d_j^2 +  \left( \frac{\eps}{\d_j}\right)^2\right) \le C \eps^\frac{1}{k+1} \left(\log\left(\frac{1}{\eps}\right)\right)^{-\frac{1}{k+1}}.
\end{split}
\end{equation}
Therefore, \eqref{st I1 0}, \eqref{st I1 21} and \eqref{st I1 6} give
\begin{equation}\label{st I1 7}
\mathcal{A} \le C \eps^\frac{1}{k+1} \left(\log\left(\frac{1}{\eps}\right)\right)^{-\frac{1}{k+1}},
\end{equation}
and it remains to estimate $\mathcal{B}$. By Lemma \ref{lem: interaction 1}
\begin{equation}\label{st B 1}
\begin{split}
\mathcal{B} & \le C \sum_{(j,h) \in I_1 \times I_2} \Big\| (P_\eps U_j) ( P_\eps U_h)^2 \Big\|_{L^{\frac43}}  \le C \sum_{(j,h) \in I_1 \times I_2} \Big\|U_j  U_h^2 \Big\|_{L^{\frac43}} \\
& =C \sum_{(j,h) \in I_1 \times I_2} \left( \int_{\Omega_\eps} U_j^\frac43 U_h^\frac83\right)^\frac34 =  \begin{cases}  O\left(\frac{\d_j}{\d_h}\right) & \text{if $j>h$} \\  O\left(\frac{\d_h}{\d_j}\right) & \text{if $j<h$},\end{cases}
\end{split}
\end{equation}
and for any $h>j$ we have
\[
\frac{\d_h}{\d_j} \le C \eps^\frac{h-j}{k+1} \left( \log\left(\frac1{\eps}\right)\right)^{-\frac{h-j}{k+1}}\leq C\eps^\frac{1}{k+1} \left( \log\left(\frac1{\eps}\right)\right)^{-\frac{1}{k+1}},
\]
The same estimate holds for $\d_j/\d_h$ in case $j>h$. Therefore, gathering \eqref{st R}, \eqref{st I1 7} and \eqref{st B 1}, we obtain the desired result.  
\end{proof}

\subsection {The nonlinear part: end of the proof of Proposition \ref{prop:C^1phi}}

In virtue of Proposition \ref{lem: linear part}, solving
 the equation
\begin{equation*}
\mf{L}_{\mf{d},\eps}(\phib)=\mf{R}_{\mf{d},\eps}+\mf{N}_{\mf{d},\eps}(\phib).
\end{equation*}
reduces to  finding a fixed point of the operator
\begin{equation*}
\mf{T}_{\mf{d},\eps}(\phib):=\(\mf{L}_{\mf{d},\eps}\)^{-1}\(\mf{R}_{\mf{d},\eps}+\mf{N}_{\mf{d},\eps}(\phib)\).
\end{equation*}
in the
ball
$$\mathcal B_\rho:=\left\{\phib \in \mf{K}^\perp_{\mf{d},\eps}\ :\ \|\phib\|_{H^1_0}\le \rho\eps^\frac{1}{k+1} \left(\log\left(\frac{1}{\eps}\right)\right)^{-\frac{1}{k+1}}\right\}$$
for some $\rho>0.$
It is quite standard to show that  $\mf{T}_{\mf{d},\eps}:\mathcal B_\rho\to\mathcal B_\rho$ is a contraction mapping for $\epsilon$ small enough. Indeed,     Proposition \ref{lem: linear part} together with   straightforward computations lead to
$$\|\mf{T}_{\mf{d},\eps}(\phib)\|_{H^1_0} \le C \(\|\mf{R}_{\mf{d},\eps}\|_{H^1_0}+\|\mf{N}_{\mf{d},\eps}(\phib)\|_{H^1_0}\)\le C \(\|\mf{R}_{\mf{d},\eps}\|_{H^1_0}+\|\phib\|_{H^1_0}^2\)
$$
and
$$
\|\mf{T}_{\mf{d},\eps}(\phib_1-\phib_2)\|_{H^1_0}\le C \( \|\mf{N}_{\mf{d},\eps}(\phib_1)-\mf{N}_{\mf{d},\eps}(\phib_2)\|_{H^1_0}\)\le \ell \| \phib_1- \phib_2 \|_{H^1_0}\ \ \hbox{for some}\ \ell\in(0,1). $$
 A standard argument also shows that the map $\mf d\to \phib^{\mf d,\eps}$ is of class $C^1.$

\begin{remark}\label{rem:section3}
Suppose that, instead of dealing with the set of odd and even numbers of $\{1,\ldots,k\}$ in the two equation case, we are dealing with system \eqref{eq:gensystem} with $m$ equations and with a general partition $I_1,\ldots, I_m$ satisfying (1)--(5). Having already splitted the original problem into $2m$ equations (see Remark \ref{rem:section2}), we can repeat the argument used for $m=2$ without substantial changes, using the fact that each set $I_j$ does not contain consecutive integers.
\end{remark}

 \section{Expansion of the reduced energy}\label{sec:reducedenergy}
 Recall that the energy funcional is given by
 \[
J_\eps(u_1,u_2)=\sum_{i=1}^2  \int_{\Omega_\eps}\left( \frac{|\nabla u_i |^2}{2}-F(u_i)\right)  -\frac{\beta}{2}\int_{\Omega_\eps} u_1^2u_2^2.
\]
where $F(s)=(s^+)^4/4$. Recall that we denote $f(s):=F'(s)=(s^+)^3$. For every $\eta>0$ small fixed, we introduce the reduced functional $\widetilde J_\eps:X_\eta\to \R$ as being
\[
\widetilde J_\eps (\mf{d})=J_\eps\left(\sum\limits_{j\in I_1}P_{\eps}U_{\delta_j}+\phi_1^{\mf{d},\eps},\sum\limits_{j\in I_2}P_{\eps}U_{\delta_j}+\phi_2^{\mf{d},\eps}\right)
\]
This is a $C^1$ functional due to Proposition \ref{prop:C^1phi} and since $\delta_i$ depends on $d_i$ via \eqref{eq:rates}. Finding critical point of  $\widetilde J_\eps$ corresponds to find solutions of our original system, as we prove next.

\begin{lemma}\label{lemma:equivalent}Given $\eps\in (0,\eps_0)$ and $\eta>0$ small, let $\mf{d}\in X_\eta$. We have that
\[
\left(\sum\limits_{j\in I_1}P_{\eps}U_{\delta_j,0}+\phi_1^{\mf{d},\eps},\sum\limits_{j\in I_2}P_{\eps}U_{\delta_j,0}+\phi_2^{\mf{d},\eps}\right) \text{ is a solution of } \eqref{eq:systemwithf}
\]
if, and only if,
\[
\mf{d}\text{ is a critical point of } \widetilde J_\eps.
\]
\end{lemma}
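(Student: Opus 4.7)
The ``only if'' implication is immediate from the chain rule: if $\mf{u}(\mf{d}):=\big(\sum_{j\in I_1}P_\eps U_{\delta_j}+\phi_1^{\mf{d},\eps},\,\sum_{j\in I_2}P_\eps U_{\delta_j}+\phi_2^{\mf{d},\eps}\big)$ is a critical point of $J_\eps$ on $H_\eps\times H_\eps$, then $J_\eps'(\mf{u}(\mf{d}))\equiv 0$, hence
\[
\partial_{d_l}\widetilde J_\eps(\mf{d})=\big\langle J_\eps'(\mf{u}(\mf{d})),\,\partial_{d_l}\mf{u}(\mf{d})\big\rangle=0\qquad\text{for every }l=1,\ldots,k.
\]
For the converse, suppose $\mf{d}\in X_\eta$ is critical for $\widetilde J_\eps$. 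By Proposition \ref{prop:C^1phi}, $\phib^{\mf{d},\eps}$ solves \eqref{eq:systemajointProjection2}, which means that $J_\eps'(\mf{u}(\mf{d}))$ annihilates $\mf{K}_{\mf{d},\eps}^\perp=K_1^\perp\times K_2^\perp$. Identifying $J_\eps'(\mf{u}(\mf{d}))$ via Riesz with a pair $\mf{g}=(g_1,g_2)\in H_\eps\times H_\eps$, we obtain $\mf{g}\in\mf{K}_{\mf{d},\eps}$, so $g_i=\sum_{j\in I_i}c_{i,j}\,P_\eps\psi_j$ for some $c_{i,j}\in\R$. The lemma therefore reduces to showing that the $k$ conditions $\partial_{d_l}\widetilde J_\eps(\mf{d})=0$ force $\mf{c}=(c_{i,j})\equiv 0$.

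Differentiating the ansatz yields
\[
\partial_{d_l}u_i(\mf{d})=\mathbf{1}_{\{l\in I_i\}}\,(\partial_{d_l}\delta_l)\,P_\eps\psi_l+\partial_{d_l}\phi_i^{\mf{d},\eps},\qquad \partial_{d_l}\delta_l=\delta_l/d_l,
\]
and differentiating the constraint $\langle\phi_i^{\mf{d},\eps},P_\eps\psi_j\rangle_{H^1_0}=0$ (for $j\in I_i$) with respect to $d_l$ gives
\[
\langle\partial_{d_l}\phi_i^{\mf{d},\eps},P_\eps\psi_j\rangle=-\langle\phi_i^{\mf{d},\eps},\partial_{d_l}(P_\eps\psi_j)\rangle,
\]
which vanishes whenever $l\neq j$ and is $O\!\big(\|\phi_i^{\mf{d},\eps}\|_{H^1_0}\big/(d_l\delta_l)\big)$ when $l=j$. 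Inserting these into $\sum_{i}\langle g_i,\partial_{d_l}u_i(\mf{d})\rangle=0$, the contribution with $l\notin I_i$ drops out entirely, while the contribution with $l\in I_{i(l)}$ reads
\[
(\partial_{d_l}\delta_l)\sum_{j\in I_{i(l)}}c_{i(l),j}\,\langle P_\eps\psi_j,P_\eps\psi_l\rangle_{H^1_0}+c_{i(l),l}\cdot O\!\Big(\|\phi_{i(l)}^{\mf{d},\eps}\|_{H^1_0}\big/(d_l\delta_l)\Big)=0,
\]
where $i(l)\in\{1,2\}$ is the unique index with $l\in I_{i(l)}$. Hence the linear system for $\mf{c}$ is \emph{block diagonal}, with one $|I_i|\times|I_i|$ block $M_i(\eps,\mf{d})$ for each $i=1,2$.

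It remains to prove that each $M_i(\eps,\mf{d})$ is invertible for $\eps$ small, uniformly in $\mf{d}\in X_\eta$. This, however, is the same diagonal-dominance statement already established in Subsection \ref{subsec:linear}: after multiplying the $l$-th row by $d_l$ and the $j$-th column by $\delta_j$, $M_i$ becomes the matrix with entries $\delta_j\delta_l\langle P_\eps\psi_j,P_\eps\psi_l\rangle_{H^1_0}+o(1)$, which by \eqref{eq phi test1}--\eqref{791} (and the bound $\|\phi_i^{\mf{d},\eps}\|_{H^1_0}=o(\delta_1)$ of Proposition \ref{prop:C^1phi} controlling the $\phi$-correction on the diagonal) is a perturbation of $\sigma_0\,\mathrm{Id}$ by an off-diagonal term which is $o(1)$ as $\eps\to 0$. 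Hence $\mf{c}=\mf{0}$, $\mf{g}=\mf{0}$, and $\mf{u}(\mf{d})$ solves \eqref{eq:systemwithf}. The only nontrivial step is the invertibility of the blocks $M_i$, but this uses precisely the Cramer-rule argument already performed for \eqref{791}, so no new estimates are needed.
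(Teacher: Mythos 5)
Your proposal is correct and follows essentially the same route as the paper: chain rule for the forward implication, and for the converse, the Riesz representation of $J_\eps'$ is a combination of the $P_\eps\psi_j$ whose coefficients must vanish because the resulting Gram-type linear system is (approximately) $\sigma_0\,\mathrm{Id}$. The only cosmetic difference is that you make the block-diagonal structure of that system explicit by noting that $\partial_{d_l}(P_\eps\psi_j)=0$ for $j\neq l$, whereas the paper leaves the cross-block pairings in the system and dismisses them via an $o(1)$ estimate before applying Cramer's rule; both treatments lead to the same conclusion.
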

\begin{proof} To simplify notations, define $V_i^{\mf{d},\eps}:= \sum_{j\in I_i} P_\eps U_{\delta_j} + \phi_i^{\mf{d},\eps}$ for $i=1,2$. From \eqref{eq:rates} we see that
\begin{equation}\label{eq:reduced_aux1}
\partial_{d_l} \widetilde J_\eps(\mf{d})=\epsilon^{l\over k+1}\(\log{1\over\epsilon}\)^{{1\over2}-{l\over k+1}} J_\eps'(V_1^{\mf{d},\eps},V_2^{\mf{d},\eps})[\partial_{\delta_l} V_1^{\mf{d},\eps},\partial_{\delta_l} V_2^{\mf{d},\eps}]
\end{equation}
Hence, if $(V_1^{\mf{d},\eps},V_2^{\mf{d},\eps})$ solves \eqref{eq:systemwithf} then $J_\eps'(V_1^{\mf{d},\eps},V_2^{\mf{d},\eps})=0$ and so $\tilde J_\eps'(\mf{d})=0$. Conversely, assume $\mf{d}\in X_\eta$  is a solution of $\widetilde J_\eps'(\mf{d})=0$.  For $i\in \{1,2\}$ and $l\in I_i$, recalling that $\psi_l:=\partial_{\delta_l} U_{\delta_l}$, we have from \eqref{eq:reduced_aux1} that
\begin{align*}
0=&J_\eps'(V_1^{\mf{d},\eps},V_2^{\mf{d},\eps})[\partial_{\delta_l} V_1^{\mf{d},\eps},\partial_{\delta_l} V_2^{\mf{d},\eps}]\\
	=&J'_\eps (V_1^{\mf{d},\eps},V_2^{\mf{d},\eps})[\sum_{j\in I_1} P_\eps \partial_{\delta_l} U_{\delta_j}+\partial_{\delta_l} \phi_1^{\mf{d},\eps},\sum_{j\in I_2} P_\eps \partial_{\delta_l} U_{\delta_l} + \partial_{\delta_l} \phi_2^{\mf{d},\eps}]\\
	=& \partial_{\delta_l} J_\eps(V_1^{\mf{d},\eps},V_2^{\mf{d},\eps})[P_\eps \psi_l]+J'_\eps(V_1^{\mf{d},\eps},V_2^{\mf{d},\eps})[\partial_{\delta_l} \phi_1^{\mf{d},\eps},\partial_{\delta_l} \phi_2^{\mf{d},\eps}]\\
	=& \langle V_i^{\mf{d},\eps}-\Ical^*(f(V_i^{\mf{d},\eps})+\beta V_i^{\mf{d},\eps} \sum_{j\neq i} (V_j^{\mf{d},\eps})^2),P_\eps \psi_l\rangle \\
	&+\sum_{n=1}^2 \langle V_{n}^{\mf{d},\eps}-\Ical^*(f(V_n^{\mf{d},\eps})+\beta V_n^{\mf{d},\eps} \sum_{m\neq n} (V_m^{\mf{d},\eps})^2),\partial_{\delta_l} \phi_n^{\mf{d},\eps}\rangle 
\end{align*}
From \eqref{eq:systemajointProjection2}, Proposition \ref{prop:C^1phi} and recalling that $K_i$ is spanned by $P_\eps \psi_j$ for $j\in I_i$, we deduce the existence of coefficients $c_i^j=c_i^j(\eps,\mf{d})$, $j\in I_i$ such that
\begin{equation}\label{eq:reduced_aux2}
V_i^{\mf{d},\eps}-\Ical^*( f(V_i^{\mf{d},\eps})+\beta V_i^{\mf{d},\eps} \sum_{j\neq i}(V_j^{\mf{d},\eps})^2 ) =\sum_{j\in I_i}c_i^j \delta_j P_\eps \psi_j,\qquad i=1,2.
\end{equation}
In conclusion, for $i\in \{1,2\}$ and $l\in I_i$,
\[
\sum_{j \in I_i} c_{i}^j \langle \delta_j P_\eps \psi_j, \delta_l P_\eps \psi_l\rangle +\sum_{n=1}^2 \sum_{j\in I_n} c_n^j \langle \delta_j P_\eps \psi_j, \delta_l\partial_{\delta_l} \phi_n^{\mf{d},\eps}\rangle =0
\]
A straightforward computation shows that 
\begin{equation}\label{eq:reduced_aux3}
\langle \delta_j P \psi_j, \delta_l P \psi_l\rangle=\textrm{o}(1) \text{ for $l\neq j$}, \qquad \langle \delta_lP \psi_l, \delta_lP \psi_l\rangle=\| \delta_lP\psi_l\|^2=\sigma_{ll}+\textrm{o}(1)\qquad \text{ as } \eps\to 0
\end{equation}
for some constant $\sigma_{ll}>0$ (see for instance \cite[p. 417]{PistoiaTavares}). On the other hand,  we have $\langle P_\eps \psi_j,\partial_{\delta_l} \phi_n^{\mf{d},\eps}\rangle=\textrm{o}(1)$.  Indeed, since $\phi_n^{\mf{d},\eps}\in K_n^\perp$ $(n=1,2)$, then $\langle P_\eps \psi_j,\phi_n^{\mf{d},\eps}\rangle =0$ for every $\mf{d}$. Therefore, taking the derivative of the previous identity with respect to $\delta_l$ ($l\in I_n$), we get $\langle P_\eps \psi_j,\partial\delta_l \phi_n^{\mf{d},\eps}\rangle=-\langle \partial_{\delta_l} P_\eps \psi_j,\phi_n^{\mf{d},\eps}\rangle$. Combining \eqref{pa U con U} with Lemma \ref{lem: interaction 0} we have $\|\delta_l\partial_{\delta_l}P_\eps \phi_j\|=\textrm{O}(1)$, while Proposition \ref{prop:C^1phi} yields $\|\phi_n^{\mf{d},\eps}\|=\textrm{o}(\delta_1)$. Therefore, $\langle P_\eps \psi_j,\delta_l\partial\delta_l \phi_n^{\mf{d},\eps}\rangle=o(\delta_1)=o(1)$, as claimed. In conclusion, we end up with a linear system of the form
\[
c_i^l \sigma_{ll} + \sum_{j\in I_i\setminus\{l\}}c_i^j \textrm{o}(1) + \sum_{n=1}^2 \sum_{j\in I_n} c_n^j \textrm{o}(1)=0,\qquad i=1,2,\ l\in I_i
\] 
which, as $\eps\to 0$, has the unique solution $c_i^j=0$ for every $i=1,2$, $j\in I_i$. Looking back at \eqref{eq:reduced_aux2} we see that $(V_1^{\mf{d},\eps},V_2^{\mf{d},\eps})$ solves \eqref{eq:systemwithf}, as we wanted.
\end{proof}

We now compute the leading term of the reduced energy. For simplicity, and when there is no risk of confusion, we denote $\phi_i=\phi_i^{\mf{d},\eps}$ and $U_i=U_{\delta_i}$. We have
\begin{align}
\widetilde J_\eps (\mf{d})&=J_\eps\left(\sum\limits_{j\in I_1}P_{\eps}U_{j}+\phi_1,\sum\limits_{j\in I_2}P_{\eps}U_{j}+\phi_2\right)\\
					&= \sum_{i=1}^2 \int_{\Omega_\eps} \left(\frac{1}{2}|\nabla (\sum_{j\in I_i}P_\eps U_j+\phi_i)|^2-F(\sum_{j\in I_i}P_\eps U_j+\phi_i)\right)-\frac{\beta}{2}\int_{\Omega_\eps} (\sum_{j\in I_1}P_\eps U_j+\phi_1)^2(\sum_{j\in I_2}P_\eps U_j+\phi_2)^2\\
					&=\sum_{i=1}^2 \sum_{j\in I_i}\int_{\Omega_\eps}\left(\frac{1}{2}|\nabla P_\eps U_j|^2-F(P_\eps U_j)\right) -\frac{\beta}{2}\sum_{i\in I_1,j\in I_2} \int_{\Omega_\eps} (P_\eps U_i)^2(P_\eps U_j)^2+ R(\mf{d},\eps)\\
					&=\sum_{i=1}^k \int_{\Omega_\eps} \left(\frac{1}{2}|\nabla P_\eps U_i|^2-\frac{1}{4}(P_\eps U_i)^4\right) - \frac{\beta}{2}\sum_{i\in I_1,j\in I_2} \int_{\Omega_\eps} (P_\eps U_i)^2(P_\eps U_j)^2+ R(\mf{d},\eps), \label{eq:reducedenergy}
\end{align}
where  
\[
R(\mf{d},\eps) = J_\eps\left(\sum\limits_{j\in I_1}P_{\eps}U_{j}+\phi_1,\sum\limits_{j\in I_2}P_{\eps}U_{j}+\phi_2\right) - \sum_{i\in I_1,j\in I_2} J_\eps\left(P_\eps U_i,P_\eps U_j\right)
\]
will be an higher order term.

In what follows we show that the reduced energy reads as 
\begin{equation}\label{rid-gen}
\tilde J_\epsilon(\mathbf{d})=c_1+c_2\delta_1^2+c_3\({\epsilon \over \delta_k}\)^2 -\beta c_4\sum\limits_{i=1}^{k-1}\({\delta_{i+1}\over\delta_i}\)^2 \log{\frac{\delta_i}{\delta_{i+1}}}  +h.o.t.,
\end{equation}
for some constants $c_1,c_2,c_3, c_4>0$. This yields the choice of parameters \eqref{eq:rates} (which for convenience of the reader we recall)
\[
\delta_j:=d_j\epsilon^{j\over k+1}\(\log{1\over\epsilon}\)^{{1\over2}-{j\over k+1}}\ \hbox{with}\ d_j>0\ \hbox{for}\  j=1,\dots,k. 
\]
and the existence of towers of bubbles as we want. Observe that, as $\eps\to 0$,
\[
\delta_1^2\sim \left(\frac{\eps}{\delta_k}\right)^2 \sim \left(\frac{\delta_{i+1}}{\delta_i}\right)^2 \log \frac{\delta_i}{\delta_{i+1}}\sim \eps^{\frac{2}{k+1}}\left(\log \frac{1}{\eps} \right)^\frac{k-1}{k+1}
\]
(see ahead for the details).

\begin{lemma}\label{lemma:singleterms} 
Given $i=1,\ldots, k$ we have
\[
\int_{\Omega_\eps} \left(\frac{1}{2}|\nabla P_\eps U_i|^2-\frac{1}{4}(P_\eps U_i)^4\right)=  \frac{B}{4}+\frac{A^2}{2} \tau(0)\delta_i^2 +\frac{\Gamma}{2}\left(\frac{\eps}{\delta_i}\right)^2+\textrm{o}(\delta_i^2)+\textrm{o}\left(\left(\frac{\eps}{\delta_i}\right)^2\right)
\]
as $\eps\to 0$, uniformly for every $\mf{d}\in X_\eta$. We recall that $ A, B$ and $\Gamma$ are defined in \eqref{eq:constantsAB}--\eqref{eq:constantsAB2}, while $\tau$ is the Robin function (see the notation section).  
 \end{lemma}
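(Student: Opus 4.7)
First, I would set $\varphi_i := U_i - P_\eps U_i$, which is harmonic in $\Omega_\eps$ with $\varphi_i = U_i$ on $\partial\Omega_\eps$. The equation $-\Delta P_\eps U_i = U_i^3$ gives $\int_{\Omega_\eps}|\nabla P_\eps U_i|^2 = \int_{\Omega_\eps} U_i^3 P_\eps U_i$, and expanding $(P_\eps U_i)^4 = (U_i-\varphi_i)^4$ leads directly to
\[
\int_{\Omega_\eps}\Bigl(\tfrac12|\nabla P_\eps U_i|^2 - \tfrac14 (P_\eps U_i)^4\Bigr)=\tfrac14\int_{\Omega_\eps} U_i^4 + \tfrac12\int_{\Omega_\eps}U_i^3\varphi_i + E_i,
\]
with $E_i:= -\tfrac{3}{2}\int U_i^2\varphi_i^2+\int U_i\varphi_i^3-\tfrac14\int\varphi_i^4$. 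A routine scaling $y = x/\delta_i$ gives $\int_{\Omega_\eps}U_i^4 = B + O(\delta_i^4) + O((\eps/\delta_i)^4)$, which already accounts for the constant $B/4$.

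The main step is then to expand $\varphi_i$ into leading pieces. The boundary values of $\varphi_i=U_i$ behave as $\alpha_4\delta_i/|x|^2$ on $\partial\Omega$ (since $\delta_i\ll|x|$) and as $\alpha_4/\delta_i$ on $\partial B_\eps$ (since $\eps\ll\delta_i$ by Remark \ref{rem:ratesrelations}). Using the elementary identity $\alpha_4/\gamma_4 = A$, this suggests
\[
\varphi_i(x) = A\,\delta_i\,H(0,x) + \frac{\alpha_4}{\delta_i}\frac{\eps^2}{|x|^2} + r_i(x),
\]
where the first summand is harmonic in $\Omega$ and reproduces $\alpha_4\delta_i/|x|^2$ on $\partial\Omega$; the second is harmonic in $\R^4\setminus\{0\}$ and matches the constant $\alpha_4/\delta_i$ on $\partial B_\eps$. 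The remainder $r_i$ is harmonic in $\Omega_\eps$, and by the maximum principle its sup norm is controlled by the residual boundary data; after subtracting further harmonic correctors of the form $-A\tau(0)\delta_i\,\eps^2/|x|^2$ (to compensate the value $A\tau(0)\delta_i$ picked up by $H(0,\cdot)$ on $\partial B_\eps$, exploiting the symmetry of $\Omega$ so that $H(0,x)=\tau(0)+O(|x|^2)$ near $0$), one brings $r_i$ to strictly smaller order than $\delta_i$ plus $\eps^2/(\delta_i|x|^2)$.

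With this expansion in hand, the two leading contributions to $\tfrac12\int U_i^3\varphi_i$ follow directly. Using $H(0,x) = \tau(0) + O(|x|^2)$ near $0$ and the scaling identity $\int_{\Omega_\eps}U_i^3 = A\delta_i + \text{h.o.t.}$, one gets
\[
\tfrac12\int_{\Omega_\eps}U_i^3\cdot A\delta_i H(0,\cdot) = \tfrac12 A^2\tau(0)\delta_i^2 + \mathrm{o}(\delta_i^2).
\]
For the hole contribution, the rescaling $y = x/\delta_i$ turns $\int U_i^3\cdot \alpha_4\eps^2/(\delta_i|x|^2)\,dx$ into $(\eps/\delta_i)^2\int_{\Omega_\eps/\delta_i}\alpha_4^4/(|y|^2(1+|y|^2)^3)\,dy$, which tends to $\Gamma(\eps/\delta_i)^2$ since $\Omega_\eps/\delta_i$ exhausts $\R^4\setminus\{0\}$. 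Finally, $\int U_i^3 r_i$ and the higher-order terms in $E_i$ are shown to be $\mathrm{o}(\delta_i^2)+\mathrm{o}((\eps/\delta_i)^2)$ by combining H\"older's inequality with the same scaling; for instance $\int U_i^2(\eps^2/|x|^2)\,dx=O(\eps^2)$ and $\int U_i^2 \delta_i^2\,dx=O(\delta_i^4\log(1/\delta_i))$.

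The main delicate point is that, in the regime \eqref{eq:rates}, the two leading scales $\delta_i^2$ and $(\eps/\delta_i)^2$ are comparable (indeed both are of order $\eps^{2/(k+1)}(\log 1/\eps)^{(k-1)/(k+1)}$), so the expansion of $\varphi_i$ must be pushed far enough that the harmonic remainder $r_i$ contributes strictly less. A naive decomposition would leave $\|r_i\|_{L^\infty}=O(\delta_i)$, and hence $\int U_i^3 r_i$ at the same order as the leading term; the cross correctors described above are precisely what upgrade this to genuine $\mathrm{o}$-terms. The uniformity in $\mf{d}\in X_\eta$ then follows because all bounds on $r_i$ depend only on $\eta$ through the ratio $d_i$, which stays bounded above and below.
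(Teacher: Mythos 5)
Your argument follows essentially the same skeleton as the paper's: test $-\Delta P_\eps U_i = U_i^3$ against $P_\eps U_i$ to convert the gradient term, expand $(P_\eps U_i)^4$ around $U_i^4$, and then insert the two-term expansion $P_\eps U_i - U_i \approx -A\delta_i H(0,x) - \alpha_4\eps^2/(\delta_i|x|^2)$ and evaluate the leading integrals by the rescaling $x = \delta_i y$. The one real difference is that you \emph{re-derive} that expansion by hand via harmonic correctors and the maximum principle, whereas the paper simply invokes Lemma~\ref{lemma:estimates} (from \cite{GeMuPi}), which already packages the same information with a quantitative bound on the remainder $R$, namely $|R(x)|\le C\delta_i\bigl[\eps^2(1+\eps\delta_i^{-3})/|x|^2 + \delta_i^2 + (\eps/\delta_i)^2\bigr]$, and on $\int U_i^2(P_\eps U_i - U_i)^2$ via Lemma~\ref{lemma:estimates2}. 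Your observation that the naive two-term expansion leaves a harmonic remainder of the same size $O(\delta_i)$ — because $A\delta_i H(0,\cdot)$ takes the value $A\tau(0)\delta_i$ on $\partial B_\eps$ — and that one must subtract an additional corrector $A\tau(0)\delta_i\,\eps^2/|x|^2$ to depress it, is exactly the content hidden in the remainder estimate of Lemma~\ref{lemma:estimates} (note this corrector contributes only $O(\eps^2) = o\bigl((\eps/\delta_i)^2\bigr)$ to $\int U_i^3\varphi_i$, so it is indeed harmless). A minor stylistic difference: you expand $(U_i - \varphi_i)^4$ by the binomial theorem, whereas the paper uses a second-order Taylor form with a bounded $\xi$-weight and controls the quadratic remainder by $\int U_i^2(P_\eps U_i - U_i)^2$ alone; both bookkeepings give the same leading coefficients $B/4$, $\tfrac12 A^2\tau(0)\delta_i^2$, $\tfrac{\Gamma}{2}(\eps/\delta_i)^2$. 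In short, your proof is correct and more self-contained, at the cost of reproving an estimate the paper is entitled to cite.
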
  
 \begin{proof}
We reason similarly to \cite[Lemma 4.3]{PistoiaSoave}, to which we refer for more details.

First of all, using \eqref{eq:Projection}, we have that
\begin{align}
\frac{1}{2} \int\limits_{\Omega_\eps} |\nabla P_\eps U_i|^2-\frac{1}{4}\int\limits_{\Omega_\eps}(P_\eps U_i)^4=&\frac{1}{2}\int\limits_{\Omega_\eps} (P_\eps U_i)U_i^3-\frac{1}{4}\int\limits_{\Omega_\eps} (P_\eps U_i)^4 \nonumber \\
=&\frac{1}{4}\int\limits_{\Omega_\eps} U_i^4+\frac{1}{2}\int\limits_{\Omega_\eps} (P_\eps U_i-U_i)U_i^3-\frac{1}{4}\int\limits_{\Omega_\eps}((P_\eps U_i)^4-U_i^4) \label{eq:firstlemma_aux}
\end{align}
Using a Taylor expansion up to second order, we have that
\[
(P_\eps U_i)^4-U_i^4= 4U_i^3 (P_\eps U_i-U_i) + 6 (U_i^2 + \xi(P_\eps U_i-U_i))^2 (P_\eps U_i-U_i)^2,
\]
for some function $\xi(x)\in [0,1]$. Therefore, we can rewrite \eqref{eq:firstlemma_aux} as 
\begin{align}\label{eq:firstlemma_aux2}
\frac{1}{4}\int\limits_{\Omega_\eps} U_i^4-\frac{1}{2}\int\limits_{\Omega_\eps} (P_\eps U_i-U_i)U_i^3-\frac{3}{2}\int\limits_{\Omega_\eps} (U_i+\xi(P_\eps U_i-U_i))^2(P_\eps U_i-U_i)^2.
\end{align}
We now estimate each one of the three terms separately. The first term in \eqref{eq:firstlemma_aux2} is, after a change of variables $x=\delta_i y$ and recalling that $\Omega_\eps=\Omega\setminus B_\eps$ and $\eps/\delta_i\to 0$,
\begin{align}
\int\limits_{\Omega_\eps}U_i^4&=\int\limits_{\Omega_\eps} \frac{\alpha_4^4 \delta_i^4}{(\delta_i^2+|x|^2)^4}\, dx= \int\limits_{\Omega_\eps\setminus \delta_i} \frac{\alpha_4^4}{(1+|y|^2)^4}\, dy\\
	&=  B + \int\limits_{\R^N\setminus (\Omega /\delta_i) } \frac{\alpha_4^4}{(1+|y|^2)^4}\, dy +\int\limits_{B_{\eps/\delta_i}} \frac{\alpha_4^4}{(1+|y|^2)^4}\, dy =B +\textrm{O}\left(\delta_i^4\right) + \textrm{O}\left(\left(\frac{\eps}{\delta_i}\right)^4\right) \\
	&=B+\textrm{o}(\delta_i^2)+\textrm{o}\left(\left(\frac{\eps}{\delta_i}\right)^2\right) \label{eq:firstlemma_aux3}
\end{align}

As for the second term, we use the fact that 
\[
P_\eps U_i- U_i = -A \delta_i H(x,0)-\frac{\alpha_4}{\delta_i} \frac{\eps^2}{|x|^2}+R(x)
\] 
(by Lemma \ref{lemma:estimates}, which we can apply since $\eps/\delta_i\to 0$ as $\eps\to 0$). We have
\begin{align}
\int\limits_{\Omega_\eps} (P_\eps U_i-U_i)U_i^3 &=  \int\limits_{\Omega_\eps} (-A\delta_iH(x,0) -\frac{\alpha_4 \eps^2}{\delta_i|x|^2})    U_i^3  + \int\limits_{\Omega_\eps} R(x) U_i^3\\
								&= \int\limits_{\Omega_\eps} \frac{\alpha_4^3\delta_i^3}{(\delta_i^2+|x|^2)^3}(-A\delta_iH(x,0) -\frac{\alpha_4 \eps^2}{\delta_i|x|^2} )+ \int\limits_{\Omega_\eps} R(x) U_i^3\\
								&= -A\int\limits_{\Omega_\eps/\delta_i} \frac{\alpha_4^3 \delta_i^2}{(1+|y|^2)^3}H(\delta_i y,0)-\int\limits_{\Omega_\eps/\delta_i} \left(\frac{\eps}{\delta_i}\right)^2 \frac{\alpha_4^4}{|y|^2(1+|y|^2)^3}+ \int\limits_{\Omega_\eps} R(x) U_i^3\\
								&=- A^2 \tau(0) \delta_i^2   -\Gamma \left(\frac{\eps}{\delta_i}\right)^2+\textrm{o}(\delta_i^2)+\textrm{o}\left(\left(\frac{\eps}{\delta_i}\right)^2\right) \label{eq:firstlemma_aux4}
\end{align}
where we have used the estimates for the remainder term $R$ contained in Lemma \ref{lemma:estimates}.

As for the last term in \eqref{eq:firstlemma_aux2}, since $0\leq P_{\eps}U_i\leq U_i$ (by the maximum principle) and $\xi(x)\in [0,1]$, we have $0\leq U_i+\xi(PU_i-U_i)\leq U_i$. Combining this with Lemma \ref{lemma:estimates2} and since $\eps/\delta_i\to 0$,
\begin{align}
\left|\, \int\limits_{\Omega_\eps}  (U_i+\xi(PU_i-U_i))^2(PU_i-U_i)^2\right| &\leq \int\limits_{\Omega_\eps} U_i^2(PU_i-U_i)^2=\textrm{O}\left(\delta_i^4|\log \delta_i|+\left(\frac{\eps}{\delta_i}\right)^4 \left|\log\left(\frac{\eps}{\delta_i}\right)\right|\right)\\
			&=\textrm{o}(\delta_i^2)+\textrm{o}\left(\left(\frac{\eps}{\delta_i}\right)^2\right). \label{eq:firstlemma_aux5}
\end{align}
The result follows combining \eqref{eq:firstlemma_aux2} with \eqref{eq:firstlemma_aux3}--\eqref{eq:firstlemma_aux4}--\eqref{eq:firstlemma_aux5}.
 \end{proof}

\begin{corollary}\label{coro:singleeq_estimate}
The following estimate holds
\begin{multline}
\sum_{i=1}^k \int_{\Omega_\eps} \frac{1}{2}|\nabla P_\eps U_i|^2-\frac{1}{4}(P_\eps U_i)^4\, dx = k \frac{B}{4}+\frac{A^2}{2} \tau(0)\delta_1^2 +\frac{\Gamma}{2}\left(\frac{\eps}{\delta_k}\right)^2+\textrm{o}(\delta_1^2)+\textrm{o}\left(\left(\frac{\eps}{\delta_k}\right)^2\right)\\
			= k \frac{B}{4}+\(\frac{A^2}{2} \tau(0)d_1^2 +\frac{\Gamma}{2}\left(\frac{1}{d_k}\right)^2\)\eps^\frac{2}{k+1}\left(\log \frac{1}{\eps}\right)^\frac{k-1}{k+1}  + \textrm{o}\left(\eps^\frac{2}{k+1}\left(\log \frac{1}{\eps}\right)^\frac{k-1}{(k+1)}\right)
\end{multline}
as $\eps\to 0$, uniformly for every $\mf{d}\in X_\eta$. 
\end{corollary}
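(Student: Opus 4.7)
The plan is to view this corollary as an almost immediate consequence of Lemma \ref{lemma:singleterms}, applied term-by-term to each index $i\in\{1,\dots,k\}$ in the sum. The main point to argue carefully is that, once summed, only the ``extremal'' contributions $\delta_1^2$ (from $i=1$) and $(\eps/\delta_k)^2$ (from $i=k$) survive to leading order, while all the other $\delta_i^2$ and $(\eps/\delta_i)^2$ are absorbed into the $o$-remainders thanks to Remark \ref{rem:ratesrelations}.

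Concretely, I would first write
\[
\sum_{i=1}^{k}\int_{\Omega_\eps}\Bigl(\tfrac12|\nabla P_\eps U_i|^2-\tfrac14(P_\eps U_i)^4\Bigr)\,dx
= k\,\tfrac{B}{4}+\tfrac{A^2}{2}\tau(0)\sum_{i=1}^{k}\delta_i^2+\tfrac{\Gamma}{2}\sum_{i=1}^{k}\Bigl(\tfrac{\eps}{\delta_i}\Bigr)^{\!2}+\sum_{i=1}^{k}\Bigl(\textrm{o}(\delta_i^2)+\textrm{o}\bigl((\eps/\delta_i)^2\bigr)\Bigr),
\]
where the error bounds are uniform for $\mf{d}\in X_\eta$ by Lemma \ref{lemma:singleterms}. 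Next, since by Remark \ref{rem:ratesrelations} we have $\delta_{i+1}/\delta_i\to0$ uniformly on $X_\eta$, an iteration gives $\delta_i^2=\textrm{o}(\delta_1^2)$ for every $i\geq 2$, and symmetrically $(\eps/\delta_i)^2=\textrm{o}((\eps/\delta_k)^2)$ for every $i\leq k-1$. Hence
\[
\sum_{i=1}^{k}\delta_i^2=\delta_1^2+\textrm{o}(\delta_1^2),\qquad \sum_{i=1}^{k}\Bigl(\tfrac{\eps}{\delta_i}\Bigr)^{\!2}=\Bigl(\tfrac{\eps}{\delta_k}\Bigr)^{\!2}+\textrm{o}\bigl((\eps/\delta_k)^2\bigr),
\]
and summing the $o$-remainders likewise collapses them into $\textrm{o}(\delta_1^2)+\textrm{o}((\eps/\delta_k)^2)$, proving the first equality.

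For the second equality, I would simply plug in \eqref{eq:rates}. Using the identity $\tfrac12-\tfrac{1}{k+1}=\tfrac{k-1}{2(k+1)}$ one obtains
\[
\delta_1^2=d_1^{\,2}\,\eps^{\frac{2}{k+1}}\bigl(\log\tfrac1\eps\bigr)^{\frac{k-1}{k+1}},\qquad \Bigl(\tfrac{\eps}{\delta_k}\Bigr)^{\!2}=\tfrac{1}{d_k^{\,2}}\,\eps^{\frac{2}{k+1}}\bigl(\log\tfrac1\eps\bigr)^{\frac{k-1}{k+1}},
\]
so that both extremal terms are of the common order $\eps^{2/(k+1)}(\log(1/\eps))^{(k-1)/(k+1)}$, which yields the stated formula after factoring. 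Since $\mf{d}$ varies in the compact set $X_\eta$, the $d_j$-dependent constants are bounded above and away from zero, so all the $o$-estimates remain uniform.

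I do not anticipate any real obstacle here: the statement is essentially a bookkeeping consequence of Lemma \ref{lemma:singleterms} combined with the hierarchy of scales in Remark \ref{rem:ratesrelations}. The only slightly delicate point is to make sure that the $o$-remainders from Lemma \ref{lemma:singleterms} are uniform in $\mf{d}\in X_\eta$, which is already how that lemma is stated, so it transfers directly to the sum.
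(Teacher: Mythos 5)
Your proposal is correct and takes essentially the same approach as the paper: apply Lemma~\ref{lemma:singleterms} term by term, then use the scale hierarchy from Remark~\ref{rem:ratesrelations} to show only the $i=1$ and $i=k$ contributions survive, and finally substitute~\eqref{eq:rates} to get the second identity. Your write-up is simply a bit more explicit about the exponent arithmetic than the paper's one-paragraph proof.
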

\begin{proof}
From the previous lemma we see that
\[
\sum_{i=1}^k\int_{\Omega_\eps} \frac{1}{2}|\nabla P_\eps U_i|^2-\frac{1}{4}(P_\eps U_i)^4\, dx=  k\frac{B}{4}+\frac{A^2}{2} \tau(0)\sum_{i=1}^k\delta_i^2 +\frac{\Gamma}{2} \sum_{i=1}^k\left(\frac{\eps}{\delta_i}\right)^2+\sum_{i=1}^k\left(\textrm{o}(\delta_i^2)+\textrm{o}\left(\left(\frac{\eps}{\delta_i}\right)^2\right)\right)
\]
and the first identity of the lemma follows because $d\in N_\eta$ and $\delta_i=o(\delta_1)$ for every $i\in \{2,\ldots,k\}$ (recall Remark \ref{rem:ratesrelations}), which implies that $\eps^2/\delta_i^2=\textrm{o}(\eps^2/\delta_k^2)$ for $i\in \{1,\ldots, k-1\}$ as $\eps\to 0$.  The second identity follows directly from the definition of $d_i$ (see \eqref{eq:rates}).
\end{proof}

\begin{lemma}\label{lemma:interaction_estimate} Given $i,j\in \{1,\ldots, k\}$ with $i>j$, we have
\[
\int_{\Omega_\eps} (P_\eps U_i)^2(P_\eps U_j)^2=\alpha_4^4|\mathbb{S}^{3}| \({\delta_i\over\delta_j}\)^2 \log{\delta_j\over\delta_i} +\textrm{o}\(\({\delta_i\over\delta_j}\)^2 \log{\delta_j\over\delta_i}\)
\]
as $\eps\to 0$, uniformly for every $\mf{d}\in X_\eta$. 
\end{lemma}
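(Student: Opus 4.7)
The plan is to (i) replace each projected bubble $P_\eps U_\ell$ in the integrand by the actual bubble $U_\ell$, controlling the discrepancy via Lemma \ref{lemma:estimates}, and (ii) compute the cleaner integral $\int_{\Omega_\eps} U_i^2 U_j^2$ by a rescaling and a region-by-region asymptotic analysis in the small parameter $\lambda := \delta_i/\delta_j$, which tends to $0$ as $\eps \to 0$ uniformly in $\mf{d}\in X_\eta$ since $i>j$ (see Remark \ref{rem:ratesrelations}).

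For step (ii), after the change of variables $x = \delta_j y$, the integral becomes
\[
\int_{\Omega_\eps} U_i^2 U_j^2\, dx = \alpha_4^4\, \lambda^2 \int_{\Omega_\eps/\delta_j} \frac{dy}{(\lambda^2 + |y|^2)^2\,(1+|y|^2)^2}.
\]
From the explicit rates \eqref{eq:rates} one checks that $\eps/\delta_j \ll \lambda$, and $\Omega/\delta_j$ contains arbitrarily large balls; hence the rescaled domain contains the full annulus $\{\lambda \le |y| \le r_0\}$ for some fixed $r_0>0$ independent of $\eps$. Splitting the integration domain into the three zones $\{|y|\le \lambda\}$, $\{\lambda \le |y|\le 1\}$ and $\{|y|\ge 1\}$ and passing to spherical coordinates, one finds that the inner zone contributes $O(1)$ (integrand bounded by $\lambda^{-4}$ on a set of measure $O(\lambda^4)$), the outer zone contributes $O(1)$ (integrand of order $|y|^{-8}$, integrable at infinity), while the middle zone, where the integrand behaves like $|y|^{-4}$, yields the dominant contribution $|\mathbb{S}^3|\log(1/\lambda) + O(1)$. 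Multiplying by $\alpha_4^4\lambda^2$ produces the stated leading order $\alpha_4^4|\mathbb{S}^3|\,(\delta_i/\delta_j)^2\log(\delta_j/\delta_i)$, with remainder of size $O(\lambda^2) = o(\lambda^2\log(1/\lambda))$ as $\eps \to 0$.

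The main obstacle is step (i): the projection error must be bounded by $o(\lambda^2\log(1/\lambda))$, a tight margin because of the logarithmic factor. Starting from the algebraic identity
\[
(P_\eps U_i)^2(P_\eps U_j)^2 - U_i^2 U_j^2 = (P_\eps U_i - U_i)(P_\eps U_i + U_i)(P_\eps U_j)^2 + U_i^2\,(P_\eps U_j - U_j)(P_\eps U_j + U_j),
\]
and using $0\le P_\eps U_\ell \le U_\ell$ (maximum principle), the task reduces to controlling cross-integrals such as $\int U_i\,|U_i-P_\eps U_i|\, U_j^2$ and $\int U_i^2\, U_j\,|U_j-P_\eps U_j|$. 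Plugging in the pointwise bound $|U_\ell - P_\eps U_\ell| \lesssim \delta_\ell + \eps^2/(\delta_\ell |x|^2)$ coming from Lemma \ref{lemma:estimates} and applying H\"older's inequality together with the same rescaling performed in step (ii) (broken into the same three zones), each piece is controlled by a quantity which, by virtue of the explicit rates \eqref{eq:rates} and of the identities in Remark \ref{rem:ratesrelations}, is strictly smaller than $\lambda^2\log(1/\lambda)$. Summing all contributions yields the desired asymptotics.
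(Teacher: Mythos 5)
Your proposal is correct and follows essentially the same strategy as the paper's proof: peel off the projection error and then compute $\int_{\Omega_\eps}U_i^2U_j^2$ by rescaling, with the logarithm emerging from the intermediate annulus. The only organizational difference is in how the bubble integral is sliced. The paper splits the $x$-integral at the geometric mean $\sqrt{\delta_i\delta_j}$ and rescales region $(I)$ by $\delta_i$ and region $(II)$ by $\delta_j$, harvesting $\tfrac12|\mathbb S^3|\log(\delta_j/\delta_i)$ from each side; you perform a single rescaling $x=\delta_j y$ and extract the whole $|\mathbb S^3|\log(1/\lambda)$ from the annulus $\{\lambda\le|y|\le1\}$, i.e.\ $\{\delta_i\le|x|\le\delta_j\}$, with the inner ball and the exterior each contributing $O(1)$. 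These are the same transition regions, just bookkept differently; your claim that the middle-zone error is $O(1)$ (rather than merely $o(\log(1/\lambda))$) is true and can be verified by subtracting $1/r$ from the radial integrand and bounding the three resulting pieces. Your treatment of the projection error is in fact more explicit than the paper's (which simply writes $h.o.t.$ citing Lemma \ref{lemma:estimates}): the algebraic identity, the pointwise bound $|U_\ell-P_\eps U_\ell|\lesssim\delta_\ell+\eps^2/(\delta_\ell|x|^2)$, and the resulting cross-integrals $\int U_i|U_i-P_\eps U_i|U_j^2\lesssim\delta_i^2$, $\int U_i^2U_j|U_j-P_\eps U_j|\lesssim\delta_i^2\log(1/\delta_i)$, etc., all fall strictly below $\lambda^2\log(1/\lambda)$ by the rates \eqref{eq:rates} (the ratio is at worst $O(\delta_j^2)$ or $O(\eps^2/\delta_i^2)$ times a bounded quotient of logarithms), so the tight margin you flag is indeed closed.
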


\begin{proof}
First, we rewrite
\begin{equation}\label{eq:add_subtract_interactionterm}
\int\limits_{\Omega_\eps} (P_\eps U_i)^2(P_\eps U_j)^2=\int\limits_{\Omega_\eps} U_i^2U_j^2 + \int\limits_{\Omega_\eps} \left( (P_\eps U_i)^2(P_\eps U_j)^2-  ( U_i)^2(U_j)^2\right)=\int\limits_{\Omega_\eps} U_i^2U_j^2 + h.o.t.
\end{equation}
(by Lemma \ref{lemma:estimates}). We estimate the leading term as follows: for $\eps>0$ small and $r>0$ such that $B_\eps\subset B_r\subset \Omega$ and $\eps<\sqrt{\delta_i\delta_j}$,
\begin{align*}
\int\limits_{\Omega_\epsilon}   U_{i} ^{  2}   U_{j} ^{ 2} =\underbrace{\int\limits_{\{\epsilon\leq |x|\le\sqrt {\delta_i\delta_j}\}}U_{i} ^{  2}   U_{j} ^{ 2}}_{(I)}+\underbrace{\int\limits_{\{\sqrt {\delta_i\delta_j}\le |x|\le r\}} U_{i} ^{  2}   U_{j} ^{ 2}}_{(II)}+\underbrace{\int\limits_{\Omega \setminus B_r} U_{i} ^{  2}   U_{j} ^{ 2}}_{(III)}.\end{align*}

\noindent Asymptotic estimate of $(I)$: scaling $x=\delta_i y$,
\begin{align}(I)=\int\limits_{\{\epsilon\le|x|\le\sqrt {\delta_i\delta_j}\}}U_{i} ^{  2}   U_{j} ^{ 2}&=\alpha_4^4\delta_i^2\delta_j^2\int\limits_{\{\epsilon\le|x|\le\sqrt {\delta_i\delta_j}\}}{1\over	\(\delta_i^2+|x|^2\)^2}{1\over\(\delta_j^2+|x|^2\)^2}dx\\
&=\alpha_4^4\delta_i^2\delta_j^2\int\limits_{\left\{{\epsilon\over\delta_i}\le|y|\le\sqrt {\delta_j\over\delta_i}\right\}}{1\over	\(1+|y|^2\)^2}{1\over\(\delta_j^2+\delta_i^2|y|^2\)^2}dy\\
&=\alpha_4^4 \({\delta_i\over\delta_j}\)^2\int\limits_{\left\{{\epsilon\over\delta_i}\le|y|\le\sqrt {\delta_j\over\delta_i}\right\}}{1\over	\(1+|y|^2\)^2}{1\over\(1+(\delta_i/\delta_j)^2  |y|^2\)^2}dy \label{eq:computationofI}
\end{align}
We have:
\begin{align*}
&\int\limits_{\left\{{\epsilon\over\delta_i} \le|y|\le\sqrt {\delta_j\over\delta_i}\right\}} {1\over\(1+|y|^2\)^2}{1\over\(1+(\delta_i/\delta_j)^2  |y|^2\)^2}dy\\
&=\underbrace{\int\limits_{\left\{{\epsilon\over\delta_i}\le|y|\le\sqrt {\delta_j\over\delta_i}\right\}}{1\over	\(1+|y|^2\)^2}dy}_{(I.a)}+\underbrace{\int\limits_{\left\{{\epsilon\over\delta_i}\le|y|\le\sqrt {\delta_j\over\delta_i}\right\}}{1\over	\(1+|y|^2\)^2}\({1\over\(1+(\delta_i/\delta_j)^2  |y|^2\)^2}-1\)dy}_{(I.b)}. 
\end{align*}
The first term can be estimated as follows:

\begin{equation}\label{eq:asymptotic}
\begin{aligned}
(I.a) &=|\mathbb S^ {3}|\int\limits_ {\epsilon\over\delta_i}^{\sqrt {\delta_j\over\delta_i}}{r^3\over	\(1+r^2\)^2}dr=\frac12|\mathbb S^ {3}|
\left[{1\over  1+r^2 }+\log(1+r^2)\right]_{r={\epsilon\over\delta_i}}
^{r=\sqrt {\delta_j\over\delta_i}} \\
&=\frac{1}{2} |\mathbb S^{3}|\(\frac{1}{1+\delta_j/\delta_i}-\frac{1}{1+\eps^2/\delta_i^2} + \log \(1+\frac{\delta_j}{\delta_i}\)-\log \(1+\frac{\eps^2}{\delta_i^2}\)  \)\\
&=\frac12|\mathbb S^ {3}|\log\frac{\delta_j}{\delta_i}+o\(\log \frac{\delta_j}{\delta_i}\),
\end{aligned}
\end{equation}
since, as $\delta_j/\delta_i\to \infty$ ($i>j$) and $\eps/\delta_i\to 0$ (recall Remark \ref{rem:ratesrelations}):
\[
\frac{1}{1+\delta_j/\delta_i}-\frac{1}{1+\eps^2/\delta_i^2}= -1 +  \textrm{o}(1) =o\(\log \frac{\delta_j}{\delta_i}\) ,\qquad \log \(1+\delta_j/\delta_i\)=\log\frac{\delta_j}{\delta_i} +\textrm{o}\(\log\frac{\delta_j}{\delta_i}\)
\]
and
\[
 \log (1+\eps^2/\delta_i^2)=\textrm{o}(1)=o\(\log \frac{\delta_j}{\delta_i}\).
 \]
As for the second term,  because $(\delta_i/\delta_j)   |y|\le \sqrt{\delta_i/\delta_j} \leq c$ for $|y|\le\sqrt{\delta_j /\delta_i}$, as $\eps\to 0$, and recalling the computation done for $(I.a)$, we have
$$\begin{aligned}
\left| (I.b)  \right| &=   \int\limits_{\left\{{\epsilon\over\delta_i}\le|y|\le\sqrt {\delta_j\over\delta_i}\right\}}{1\over	\(1+|y|^2\)^2}\(|(\delta_i/\delta_j)^4  |y|^4+2(\delta_i/\delta_j)^2  |y|^2\over\(1+(\delta_i/\delta_j)^2  |y|^2\)^2 \)dy \\ 
& \le  c'\frac{\delta_i}{\delta_j}\int\limits_{\left\{{\epsilon\over\delta_i}\le|y|\le\sqrt {\delta_j\over\delta_i}\right\}}{1 \over	\(1+|y|^2\)^2} dy\  \le c''\frac{\delta_i}{\delta_j}\log \frac{\delta_j}{\delta_i}  =o\(\log \frac{\delta_j}{\delta_i}\).
\end{aligned}
$$
Combining the expansions of $(I.a)$ and $(I.b)$ with \eqref{eq:computationofI} yields, in conclusion, that
\begin{equation*}
(I)= \frac{\alpha_4^4}{2}|\mathbb{S}^{3}|  \({\delta_i\over\delta_j}\)^2 \log\frac{\delta_j}{\delta_i} + \textrm{o}\left( \({\delta_i\over\delta_j}\)^2 \log\frac{\delta_j}{\delta_i}\right).
\end{equation*}

\noindent Asymptotic estimate of $(II)$:  by using this time the  scaling $x=\delta_j y$ and the fact that
\[
\int \frac{1}{r(1+r^2)}\, dr= \log r -\frac{1}{2}\log(1+r^2) + \frac{1}{2(1+r^2)},
\]
we have
\begin{align*}(II)=\int\limits_{\{\sqrt {\delta_i\delta_j}\le  |x| \le r\}}U_{i} ^{  2}   U_{j} ^{ 2}&= \alpha_4^4\delta_i^2\delta_j^2\int\limits_{\{\sqrt {\delta_i\delta_j}\leq |x|\leq r\}}{1\over	\(\delta_i^2+|x|^2\)^2}{1\over\(\delta_j^2+|x|^2\)^2}dx\\
&=\alpha_4^4\delta_i^2\delta_j^2\int\limits_{\left\{\sqrt {\delta_i\over\delta_j}\le |y|\le {r\over\delta_j}\right\}}{1\over\(\delta_i^2+\delta_j^2|y|^2\)^2} {1\over	\(1+|y|^2\)^2}dy\\
&=\alpha_4^4 \({\delta_i\over\delta_j}\)^2\int\limits_{\left\{\sqrt {\delta_i\over\delta_j}\le |y|\le {r\over\delta_j}\right\}}{1\over\( (\delta_i/\delta_j)^2+  |y|^2\)^2}{1\over	\(1+|y|^2\)^2}dy\\
&=\alpha_4^4 \({\delta_i\over\delta_j}\)^2\int\limits_{\left\{\sqrt {\delta_i\over\delta_j}\le |y|\le {r\over\delta_j}\right\}}{1\over	|y|^4\(1+|y|^2\)^2}dy + \textrm{o}\(\({\delta_i\over\delta_j}\)^2 \log\frac{\delta_j}{\delta_i}\)\\
&=\alpha_4^4 \({\delta_i\over\delta_j}\)^2  |\mathbb{S}^3| \int\limits_{\sqrt {\delta_i\over\delta_j}}^{r\over\delta_j} \frac{1}{r(1+r^2)^2}\, dr + \textrm{o}\(\({\delta_i\over\delta_j}\)^2 \log\frac{\delta_j}{\delta_i}\)
\\
&= \frac{\alpha_4^4}{2}|\mathbb{S}^{3}|  \({\delta_i\over\delta_j}\)^2 \log\frac{\delta_j}{\delta_i}+ \textrm{o}\(\({\delta_i\over\delta_j}\)^2 \log\frac{\delta_j}{\delta_i}\).
\end{align*}

\noindent Asymptotic estimate of $(III)$:
\begin{align*}
0\leq \int\limits_{\Omega \setminus B_r} U_{i} ^{  2}   U_{j} ^{ 2} &=\alpha_4^4 \delta_i^2 \delta_j^2 \int\limits_{\Omega\setminus B_r} \frac{1}{(\delta_i^2+|y|^2)^2}\frac{1}{(\delta_j^2+|y|^2)^2}=\alpha_4^4\({\delta_i \over \delta_j}\)^2 \int\limits_{\Omega\setminus B_r} \frac{1}{(\delta_i^2+|y|^2)^2} \frac{1}{(1+\delta_j^{-2} |y|^2)^2} \\
				&\leq \alpha_4^4\({\delta_i \over \delta_j}\)^2 \int\limits_{\Omega\setminus B_r} \frac{1}{|y|^4} \leq c \({\delta_i \over \delta_j}\)^2= \textrm{o}\(\({\delta_i\over\delta_j}\)^2 \log\frac{\delta_j}{\delta_i}\).
\end{align*}
By combining the estimates of $(I)$, $(II)$ and $(III)$ we deduce that
\[
\int\limits_{\Omega_\epsilon}   U_{i} ^{  2}   U_{j} ^{ 2}=\alpha_4^4|\mathbb{S}^{3}| \({\delta_i\over\delta_j}\)^2 \log{\delta_j\over\delta_i} +\textrm{o}\(\({\delta_i\over\delta_j}\)^2 \log{\delta_j\over\delta_i}\)
\]
which yields the desired conclusion.
 \end{proof}

\begin{corollary}\label{coro:interaction_estimate}
We have, as $\eps\to 0$, uniformly for every $\mf{d}\in X_\eta$,
\begin{align}
\sum_{i\in I_1,j\in I_2}\, \int\limits_{\Omega_\eps} (P_\eps U_i)^2(P_\eps U_j)^2  &=\alpha_4^4|\mathbb{S}^{3}|  \sum_{i=1}^{k-1} \({\delta_{i+1}\over\delta_i}\)^2 \log{\delta_i\over\delta_{i+1}} +  \sum_{i=1}^{k-1} \textrm{o}\(\({\delta_{i+1}\over\delta_i}\)^2 \log{\delta_i\over\delta_{i+1}} \)  \nonumber\\
&=\frac{\alpha_4^4}{k+1}|\mathbb{S}^{3}|  \sum_{i=1}^{k-1}\(\frac{d_{i+1}}{d_{i}}\)^2 \eps^\frac{2}{k+1}\(\log \frac{1}{\eps}\)^{\frac{k-1}{k+1}} +\textrm{o}\left(\eps^\frac{1}{k+1}\left(\log \frac{1}{\eps}\right)^\frac{k-1}{(k+1)}\right) \label{eq:estimate_mixedterms}
\end{align}
\end{corollary}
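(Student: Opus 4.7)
The plan is to apply Lemma \ref{lemma:interaction_estimate} to each cross-pair and exploit the structure of the odd/even partition to isolate the dominant contributions. I would observe first that, since $I_1$ and $I_2$ are exactly the odd and even integers in $\{1,\ldots,k\}$, every consecutive pair $\{l, l+1\}$ is a cross-pair, and conversely any cross-pair $(i,j) \in I_1 \times I_2$ with $|i-j| \neq 1$ has $|i-j|$ odd and hence $|i-j| \geq 3$. This gives the natural splitting
\[
\sum_{i\in I_1,\, j\in I_2} \int_{\Omega_\eps}(P_\eps U_i)^2(P_\eps U_j)^2 = \sum_{l=1}^{k-1} \int_{\Omega_\eps}(P_\eps U_l)^2(P_\eps U_{l+1})^2 + \sum_{\substack{(i,j)\in I_1\times I_2 \\ |i-j|\geq 3}} \int_{\Omega_\eps}(P_\eps U_i)^2(P_\eps U_j)^2.
\]

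For the first sum, Lemma \ref{lemma:interaction_estimate} applied to each consecutive pair gives directly the main term $\alpha_4^4 |\mathbb{S}^{3}|\sum_{l=1}^{k-1} (\delta_{l+1}/\delta_l)^2 \log(\delta_l/\delta_{l+1})$ plus the corresponding $\textrm{o}$-error. The heart of the argument is to check that the second sum is $\textrm{o}$ of the main term. From Lemma \ref{lemma:interaction_estimate} combined with the ansatz \eqref{eq:rates}, each such term (with $i<j$ and $j-i\geq 3$) is of order $\eps^{2(j-i)/(k+1)}(\log(1/\eps))^{1 - 2(j-i)/(k+1)}$, whereas a consecutive pair contributes at order $\eps^{2/(k+1)}(\log(1/\eps))^{(k-1)/(k+1)}$; the ratio is $\textrm{O}(\eps^{2(j-i-1)/(k+1)}) = \textrm{o}(1)$ as $\eps \to 0$, uniformly for $\mf{d} \in X_\eta$. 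Since the number of such pairs is bounded independently of $\eps$, the total non-consecutive sum is itself $\textrm{o}((\delta_2/\delta_1)^2 \log(\delta_1/\delta_2))$, which combined with the consecutive $\textrm{o}$-errors yields the first identity of the statement.

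The second identity is then a direct substitution of \eqref{eq:rates} into each term $(\delta_{l+1}/\delta_l)^2 \log(\delta_l/\delta_{l+1})$: one computes
\[
\left(\frac{\delta_{l+1}}{\delta_l}\right)^2 = \left(\frac{d_{l+1}}{d_l}\right)^2 \eps^{\frac{2}{k+1}}\left(\log \frac{1}{\eps}\right)^{-\frac{2}{k+1}}, \qquad \log\frac{\delta_l}{\delta_{l+1}} = \frac{1}{k+1}\log\frac{1}{\eps} + \textrm{O}(\log\log(1/\eps)),
\]
whose product equals $\frac{1}{k+1}(d_{l+1}/d_l)^2 \eps^{2/(k+1)} (\log(1/\eps))^{(k-1)/(k+1)}$ up to a lower-order correction. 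Summing over $l$ gives the announced formula (noting that the $\textrm{o}(\eps^{1/(k+1)}(\log(1/\eps))^{(k-1)/(k+1)})$ bound written in the statement is actually a weaker form of what the calculation produces). The main obstacle is not conceptual but purely bookkeeping: the careful tracking of $\eps$-powers and $\log(1/\eps)$-powers across the various cross-pairs, in order to confirm that non-consecutive contributions are strictly of lower order than consecutive ones. Once this comparison is verified, the corollary reduces to a plain application of Lemma \ref{lemma:interaction_estimate}.
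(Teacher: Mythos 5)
Your proposal is correct and follows essentially the same route as the paper: decompose the double sum into consecutive cross-pairs (the dominant ones) and non-consecutive ones, invoke Lemma \ref{lemma:interaction_estimate} for each, and verify via \eqref{eq:rates} that the non-consecutive contributions are of strictly lower order. The only difference is presentational -- you carry out the power-counting in $\eps$ and $\log(1/\eps)$ explicitly, whereas the paper phrases the key comparison abstractly as $(\delta_i/\delta_j)^2\log(\delta_j/\delta_i)=o\bigl((\delta_{j+1}/\delta_j)^2\log(\delta_j/\delta_{j+1})\bigr)$ for $|i-j|>1$; and you correctly observe that the $\eps^{1/(k+1)}$ in the stated error term is a typo for (or weaker version of) the $\eps^{2/(k+1)}$ that the computation actually delivers.
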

\begin{proof}
The first identity is a simple consequence of the previous lemma together with the fact that $\delta_{l}=\textrm{o}(\delta_i)$ as $\eps\to 0$, for $l>i$. In fact, since each one of the sets $I_1$ and $I_2$ do not contain two consecutive integers, and that given $i>j$ with $|i-j|>1$ it holds
\[
\left(\frac{\delta_i}{\delta_j}\right)^2\log \frac{\delta_j}{\delta_i}=o\left( \left(\frac{\delta_{j+1}}{\delta_j}\right)^2\log \frac{\delta_j}{\delta_{j+1}} \right),
\]
then
\begin{align}
\sum_{i\in I_1,j\in I_2}\, \int\limits_{\Omega_\eps} (P_\eps U_i)^2(P_\eps U_j)^2  &=\sum_{i\in I_1,j\in I_2} \left( \alpha_4^4|\mathbb{S}^{3}| \({\delta_i\over\delta_j}\)^2 \log{\delta_j\over\delta_i} +\textrm{o}\(\({\delta_i\over\delta_j}\)^2 \log{\delta_j\over\delta_i}\)\right)\\
				&=\alpha_4^4|\mathbb{S}^{3}|  \sum_{i=1}^{k-1} \({\delta_{i+1}\over\delta_i}\)^2 \log{\delta_i\over\delta_{i+1}} +  \sum_{i=1}^{k-1} \textrm{o}\(\({\delta_{i+1}\over\delta_i}\)^2 \log{\delta_i\over\delta_{i+1}} \).
\end{align}
The last identity of the statement is a consequence of the definition of $\delta_i$ and the fact that
\begin{align*}
\(\delta_{i+1} \over \delta_i\)^2 \log {\delta_{i} \over \delta_{i+1}} &=\(\frac{d_{i+1}}{d_{i}}\)^2 \eps^\frac{2}{k+1}\(\log \frac{1}{\eps}\)^{-\frac{2}{k+1}}\log\(\frac{d_i}{d_{i+1}}\left(\frac{1}{\eps}\right)^{\frac{1}{k+1}}
\(\log \frac{1}{\eps}\)^\frac{1}{k+1}\)\\
	&=\frac{1}{k+1} \(\frac{d_{i+1}}{d_{i}}\)^2 \eps^\frac{2}{k+1}\(\log \frac{1}{\eps}\)^{\frac{k-1}{k+1}} +\textrm{o}\left(\eps^\frac{2}{k+1}\left(\log \frac{1}{\eps}\right)^\frac{k-1}{k+1}\right)   \qedhere
\end{align*} 
\end{proof}

\begin{lemma}\label{lemma:remainder} We have
\begin{equation}\label{eq:remainder}
R(\mf{d},\eps)=\textrm{o}(\delta_1^2)=\textrm{o}\left(\eps^\frac{2}{k+1}\left(\log \frac{1}{\eps}\right)^\frac{k-1}{k+1}\right)
\end{equation}
as $\eps\to 0$, uniformly for every $\mf{d}\in X_\eta$.
\end{lemma}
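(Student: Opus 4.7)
Write $W_i := \sum_{j\in I_i} P_\eps U_j$ and $V_i := W_i + \phi_i^{\mf{d},\eps}$ for $i=1,2$, and split
\[
R(\mf{d},\eps) = \bigl[J_\eps(V_1,V_2) - J_\eps(W_1,W_2)\bigr] + \bigl[J_\eps(W_1,W_2) - \widehat{J}_\eps(\mf{d})\bigr],
\]
where $\widehat{J}_\eps(\mf{d})$ denotes the leading part appearing in \eqref{eq:reducedenergy}, namely the sum of single-bubble energies plus the $(-\beta/2)$ cross-component pairwise interaction. The plan is to bound each bracket separately by $o(\delta_1^2)$.

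For the first bracket the decisive observation is that, since $(\phi_1,\phi_2) \in K_1^\perp \times K_2^\perp$, equation \eqref{eq:systemajointProjection2} is equivalent to the variational identity $DJ_\eps(V_1,V_2)[\phi_1,\phi_2] = 0$. Setting $g(t) := J_\eps(W_1 + t\phi_1, W_2 + t\phi_2)$ and Taylor-expanding around $t=1$ (using $g'(1)=0$), I obtain
\[
J_\eps(V_1,V_2) - J_\eps(W_1,W_2) = -\int_0^1 t\, D^2 J_\eps(W+t\phi)\bigl[(\phi_1,\phi_2),(\phi_1,\phi_2)\bigr]\, dt.
\]
Since $\|P_\eps U_j\|_{L^4}\leq \|U_{1,0}\|_{L^4}$ is uniformly bounded, H\"older's inequality and the Sobolev embedding bound the integrand by $C(\|\phi_1\|_{H^1_0}^2+\|\phi_2\|_{H^1_0}^2)$. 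Proposition \ref{prop:C^1phi} gives $\|\phi_i\|_{H^1_0}^2 = O(\eps^{2/(k+1)}(\log(1/\eps))^{-2/(k+1)})$; comparing with $\delta_1^2 \sim \eps^{2/(k+1)}(\log(1/\eps))^{(k-1)/(k+1)}$ yields the extra factor $(\log(1/\eps))^{-1}\to 0$, so this bracket is $o(\delta_1^2)$.

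For the second bracket, I would expand $\tfrac12\int |\nabla W_i|^2$, $\int F(W_i)$, and $\tfrac12\int W_1^2 W_2^2$, and subtract the diagonal terms. What is left splits into two families: (a) within-component mixed products $\int U_j^a U_l^b$ with $j,l\in I_i$, $j\neq l$, arising from $\int \nabla P_\eps U_j \cdot \nabla P_\eps U_l$ and from the quartic expansion of $W_i$; (b) cross-component off-diagonal four-bubble products from the expansion of $W_1^2 W_2^2$, indexed by quadruples $(j,j',l,l')\in I_1^2\times I_2^2$ not of the form $(j,j,l,l)$. For (a), property (5) of the partition forces $|j-l|\geq 2$, hence $\delta_l/\delta_j = O(\eps^{2/(k+1)}(\log(1/\eps))^{-2/(k+1)})$ when $l>j$; the interaction lemmas \ref{lem: interaction 0}, \ref{lem: interaction 1} and the three-bubble estimate \ref{lem: int a 3} then produce bounds of the form $O((\delta_l/\delta_j)^\alpha (\log(\delta_j/\delta_l))^\gamma)$ with $\alpha \geq 1$, which are $o(\delta_1^2)$ by exactly the same $(\log(1/\eps))^{-1}$ gain as above. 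For (b), H\"older's inequality reduces each four-bubble integral to a product of two-bubble integrals of the type controlled by Lemmas \ref{lem: interaction 1} and \ref{lem: int a 3}, and these are strictly subcritical compared with the leading diagonal contribution $(\delta_{i+1}/\delta_i)^2\log(\delta_i/\delta_{i+1})$ computed in Lemma \ref{lemma:interaction_estimate}.

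The main obstacle will be the combinatorial bookkeeping in (a) and (b): one must enumerate the many cross-terms generated by the quartic expansions and verify term-by-term that each is subcritical. The recurring mechanism is always the $(\log(1/\eps))^{-1}$ gain available because bubbles within the same component are separated by at least two ``levels''; this is the very mechanism that selects the concentration rates in \eqref{eq:rates} and that balances the boundary interaction against the inter-component interaction throughout the construction.
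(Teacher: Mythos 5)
Your Taylor-expansion argument for the first bracket is a genuine improvement over the paper's term-by-term estimates. The paper treats all the $\phi$-dependent contributions (its terms $a_3,\dots,a_6$) individually, whereas you observe that \eqref{eq:systemajointProjection2}, together with $(\phi_1,\phi_2)\in K_1^\perp\times K_2^\perp$, gives $DJ_\eps(V_1,V_2)[\phi_1,\phi_2]=0$, and you then use the exact second-order identity
\[
J_\eps(V_1,V_2)-J_\eps(W_1,W_2)=-\int_0^1 t\,D^2J_\eps(W+t\bs\phi)[\bs\phi,\bs\phi]\,dt,
\]
with the Hessian controlled by $C\|\bs\phi\|_{H^1_0}^2$ because the $L^4$-norms of the bubbles are universally bounded. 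Combined with Proposition~\ref{prop:C^1phi}, this disposes of all $\phi$-terms at once. That part is correct and cleaner than the paper's treatment.

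The gap is in your family~(b). H\"older (in any of the forms you allude to) is too lossy for the cross-component off-diagonal four-bubble integrals; it loses a factor of $\log\frac1\eps$, which is exactly the margin that separates ``the same order as $\delta_1^2$'' from ``$o(\delta_1^2)$''. Take the concrete term $\int_{\Omega_\eps} U_1 U_3 U_2^2$ (with $1,3\in I_1$, $2\in I_2$), arising from $(i,i',j,j')=(1,3,2,2)$ in the expansion of $W_1^2W_2^2$. Cauchy--Schwarz gives
\[
\int U_1 U_3 U_2^2 \le \Big(\int U_1^2U_2^2\Big)^{1/2}\Big(\int U_3^2 U_2^2\Big)^{1/2}
\sim \frac{\delta_3}{\delta_1}\Big(\log\frac{\delta_1}{\delta_2}\cdot\log\frac{\delta_2}{\delta_3}\Big)^{1/2}
\sim \frac{\delta_3}{\delta_1}\cdot\frac{1}{k+1}\log\frac1\eps,
\]
and with \eqref{eq:rates} this is $\sim \eps^{2/(k+1)}(\log\frac1\eps)^{1-2/(k+1)}=O(\delta_1^2)$ — same order as the diagonal, \emph{not} $o(\delta_1^2)$. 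Using instead H\"older against $\|U_2\|_{L^4}$ together with Lemma~\ref{lem: interaction 1} or Lemma~\ref{lem: int a 3} is even worse: it yields $O(\delta_2/\delta_1)$, which is $\gg \delta_1^2$ because $\eps^{1/(k+1)}\gg\eps^{2/(k+1)}$. By contrast, the direct scaling computation (as in the paper's estimates) shows $\int U_1 U_3 U_2^2 = O(\delta_3/\delta_1)$ with no logarithm, and only then does one get the needed $(\log\frac1\eps)^{-1}$ gain to conclude $o(\delta_1^2)$. The same issue affects all singly off-diagonal quadruples $(i,i,j,j')$ and $(i,i',j,j)$. So for family~(b) you cannot reduce to two-bubble integrals by H\"older; you have to run the explicit four-bubble scaling computations, exactly as the paper does in its case analysis of $\int U_i^3 U_j$, $\int U_i^2 U_j U_l$ and $\int U_i U_j U_h U_l$. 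Your family~(a) argument is fine, since there property~(5) gives $|j-l|\ge 2$ and the two-bubble lemmas already produce bounds with the required extra $\eps$-power.
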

\begin{proof}

Recall that $F(s)=(s^+)^4/4$, and we denote $f(s):=F'(s)=(s^+)^3$. We have
\begin{align*}
				R(\mf{d},\eps)= & J_\eps\left(\sum\limits_{j\in I_1}P_{\eps}U_{j}+\phi_1,\sum\limits_{j\in I_2}P_{\eps}U_{j}+\phi_2\right) - \sum_{i\in I_1,j\in I_2} J_\eps\left(P_\eps U_i,P_\eps U_j\right)			\\
			=&\frac{1}{2}\sum_{i=1}^2  \mathop{\sum_{j,k\in I_i }}_{ j\neq k} \int_{\Omega_\eps} \nabla P_\eps U_j \cdot \nabla P_\eps U_k +  \frac{1}{2}\sum_{i=1}^2\int_{\Omega_\eps} |\nabla \phi_i|^2  + \sum_{i=1}^2 \sum_{j\in I_i} \int_{\Omega_\eps} \nabla P_\eps U_j \cdot \nabla \phi_i \\
			&+ \sum_{i=1}^2 \int_{\Omega_\eps} \left( \sum_{j\in I_i} F(P_\eps U_j)-F(\sum_{j\in I_i} P_\eps U_j+\phi_i) \right)\\
			&+\frac{\beta}{2} \sum_{i\in I_1,j\in I_2}\int_{\Omega_\eps} (P_\eps U_i)^2(P_\eps U_j)^2 - \frac{\beta}{2}\int_{\Omega_\eps} (\sum_{j\in I_1}P_\eps U_j+\phi_1)^2(\sum_{j\in I_2}P_\eps U_j+\phi_2)^2
\end{align*}
Recalling the definition of $P_\eps$ from \eqref{eq:Projection} and adding and subtracting terms of type $F(\sum_{j\in I_i} P_\eps U_j)$ and $f(\sum_{j\in I_i} P_\eps U_j)\phi_i$, we have
\begin{align*}
\frac{1}{2}\sum_{i=1}^2 & \mathop{\sum_{j,k\in I_i }}_{ j\neq k} \int_{\Omega_\eps} \nabla P_\eps U_j \cdot \nabla P_\eps U_k +  \frac{1}{2}\sum_{i=1}^2\int_{\Omega_\eps} |\nabla \phi_i|^2  + \sum_{i=1}^2 \sum_{j\in I_i} \int_{\Omega_\eps} \nabla P_\eps U_j \cdot \nabla \phi_i \\
&+ \sum_{i=1}^2 \int_{\Omega_\eps}\left( \sum_{j\in I_i} F(P_\eps U_j)-F(\sum_{j\in I_i} P_\eps U_j+\phi_i) \right)\\
=& \frac{1}{2}\sum_{i=1}^2  \mathop{\sum_{j,k\in I_i }}_{ j\neq k} \int_{\Omega_\eps}   U^3_j  P_\eps U_k\, dx+ \sum_{i=1}^2 \int_{\Omega_\eps}\left( \sum_{j\in I_i} F(P_\eps U_j)-F(\sum_{j\in I_i} P_\eps U_j)\right)\\
			& +  \frac{1}{2}\sum_{i=1}^2\int_{\Omega_\eps} |\nabla \phi_i|^2  - \sum_{i=1}^2 \int_{\Omega_\eps} \left(F(\sum_{j\in I_i} P_\eps U_j+\phi_i)-F(\sum_{j\in I_i} P_\eps U_j) -f(\sum_{j\in I_i} P_\eps U_j)\phi_i\right) \\
			&+\sum_{i=1}^2 \int_{\Omega_\eps} \left(\sum_{j\in I_i} f( U_j )-f(\sum_{j\in I_i} P_\eps U_j) \right)\phi_i.
\end{align*}
Moreover, 
\begin{align*}
\frac{\beta}{2}\sum_{i\in I_1,j\in I_2}&\int_{\Omega_\eps} (P_\eps U_i)^2(P_\eps U_j)^2 - \frac{\beta}{2}\int_{\Omega_\eps} (\sum_{j\in I_1}P_\eps U_j+\phi_1)^2(\sum_{j\in I_2}P_\eps U_j+\phi_2)^2\\
	=&- \frac{\beta}{2}\int\limits_{\Omega_\eps} \mathop{\sum_{i,j\in I_1}}_{i\neq j} \mathop{ \sum_{k,l\in I_2}}_{k\neq l} P_\eps U_iP_\eps U_j P_\eps U_k P_\eps U_l - \frac{\beta}{2}\int\limits_{\Omega_\eps} \mathop{\sum_{i,j=1}^2}_{i\neq j}  \sum_{k,l\in I_i} P_\eps U_kP_\eps U_l \sum_{m\in I_j} P_\eps U_m \phi_j  \\
	    &-   \frac{\beta}{2} \int\limits_{\Omega_\eps} \mathop{\sum_{i,j=1}^2}_{i\neq j} \sum_{k,l\in I_i} P_\eps U_k P_\eps U_l \phi_j^2 - \frac{\beta}{2}\int\limits_{\Omega_\eps} 4\sum_{i\in I_1,j\in I_2} P_\eps U_i P_\eps U_j \phi_1\phi_2\\
		&-\frac{\beta}{2} \int\limits_{\Omega_\eps} 2\mathop{\sum_{i,j=1}^2}_{i\neq j} \sum_{k\in I_i} P_\eps U_k \phi_i \phi_j^2   -  \frac{\beta}{2}\int\limits_{\Omega_\eps} \phi_1^2\phi_2^2.
  \end{align*}
Let us rewrite $R(\mf{d},\eps)$ as 

\begin{align*}
			R(\mf{d},\eps)=&\underbrace{ \frac{1}{2}\sum_{i=1}^2  \mathop{\sum_{j,h\in I_i }}_{ j\neq h} \int_{\Omega_\eps}   U^3_j  P_\eps U_h      - \frac{\beta}{2}\int\limits_{\Omega_\eps} \mathop{\sum_{i,j\in I_1}}_{i\neq j} \mathop{ \sum_{h,l\in I_2}}_{h\neq l} P_\eps U_iP_\eps U_j P_\eps U_h P_\eps U_l  }_{:=a_1}\\
			&+\underbrace{\int\limits_{\Omega_\eps} \sum_{i=1}^2 \int_{\Omega_\eps}\left( \sum_{j\in I_i} F(P_\eps U_j)-F(\sum_{j\in I_i} P_\eps U_j)\right)}_{=:a_2}\\
			& + \underbrace{ \sum_{i=1}^2 \int_{\Omega_\eps} \left(F(\sum_{j\in I_i} P_\eps U_j)-F(\sum_{j\in I_i} P_\eps U_j+\phi_i) +f(\sum_{j\in I_i} P_\eps U_j)\phi_i\right)}_{:=a_3} \\
			&+\underbrace{  \sum_{i=1}^2 \int_{\Omega_\eps} \left(\sum_{j\in I_i} f( U_j )-f(\sum_{j\in I_i} P_\eps U_j) \right)\phi_i  - \frac{\beta}{2}\int\limits_{\Omega_\eps} \mathop{\sum_{i,j=1}^2}_{i\neq j}  2\sum_{h,l\in I_i} P_\eps U_h P_\eps U_l \sum_{m\in I_j} P_\eps U_m \phi_j }_{:=a_4}\\
		& \underbrace{-   \frac{\beta}{2} \int\limits_{\Omega_\eps} \mathop{\sum_{i,j=1}^2}_{i\neq j} \sum_{h,l\in I_i} P_\eps U_h P_\eps U_l \phi_j^2 - \frac{\beta}{2}\int\limits_{\Omega_\eps} 4\sum_{i\in I_1,j\in I_2} P_\eps U_i P_\eps U_j \phi_1\phi_2}_{:=a_5}\\
		&-\underbrace{\frac{\beta}{2} \int\limits_{\Omega_\eps} 2\mathop{\sum_{i,j=1}^2}_{i\neq j} \sum_{h\in I_i} P_\eps U_h \phi_i \phi_j^2 +  \frac{1}{2}\sum_{i=1}^2\int_{\Omega_\eps} |\nabla \phi_i|^2   -  \frac{\beta}{2}\int\limits_{\Omega_\eps} \phi_1^2\phi_2^2}_{=:a_6}.
\end{align*} 

\smallbreak

\textit{Estimates for $a_1,a_2$.} First of all, we check that the first two terms satisfy $a_1,a_2=o(\delta_1^2).$ Indeed, since $0\leq P_\eps U_i\leq U_i$ (by the maximum principle), $a_1+a_2$ is controlled by a sum of terms of the form
$  
 \int_{\Omega_\eps}P_\eps U_i P_\eps U_j  P_\eps U_h P_\eps U_l$ for   indices $j,l,h,i$ not all equal at the same time; each term is of higher order with respect to the leading term $\delta_1^2$, as we will now check.  
Indeed, if  $i\not=j$ we have by Lemma \ref{lem: interaction 1} that
\begin{align*}
\int\limits_{\Omega_\eps} U_i^3 U_j &= \begin{cases}
\displaystyle \textrm{O}\left(\frac{\delta_i}{\delta_j}\right) \int_{\Omega_\eps/\delta_i} \frac{1}{(1+|y|^2)^3}=\textrm{O}\left(\frac{\delta_i}{\delta_j}\right) & \text{ if } i>j\\[10pt]
\displaystyle \textrm{O}\left(\frac{\delta_j}{\delta_i}\right) \int_{\Omega_\eps/\delta_i} \frac{1}{(1+|y|^2)^3|y|^2}=\textrm{O}\left(\frac{\delta_j}{\delta_i}\right) & \text{ if } j>i.\\
\end{cases}
\\
  &=o(\delta_1^2)
\end{align*}
because we are always in a situation that $i,j$ belong to the same set $I_h$, thus $|i-j|>1$, and then by the choices we did in \eqref{eq:rates}, 
\[
\frac{\delta_i}{\delta_j}=\textrm{O}\left(\varepsilon^{i-j\over k+1}\left(\ln\frac1\varepsilon\right)^{-{i-j\over k+1}}\right)=o\left(\delta_1^2\right) \qquad \text{ whenever $i- j\ge 2$}.
\]
Moreover, if $i\not=j$, then assuming without loss of generality that $i>j$ with $|i-j|>1$ we have by Lemma \ref{lem: interazione 2}
\begin{align*}
\int\limits_{\Omega_\eps} U_i^2 U_j^2 =\textrm{O}\left( \frac{\delta_i^2}{\delta_j^2}|\log \delta_i|\right)=o\left(\frac{\delta_i}{\delta_j}\right)  =o(\delta_1^2).
 \end{align*}
 (note that this term only appears in $a_2$). In a similar way, if $i,j,l\in I_h$ for some $h\in \{1,2\}$, then  $|i-j|,|i-l|, |j-l|>1 $ and 
\begin{align*}
\int\limits_{\Omega_\epsilon}U_i^2 U_j  U_l dx &=  \int\limits_{\Omega_\epsilon}   {\alpha_4^2\delta_i^2\over(\delta_i^2+|x|^2)^2}{\alpha_4\delta_j \over\delta_j^2+|x|^2 }{\alpha_4\delta_l \over\delta_l^2+|x|^2 }dx= \int\limits_{\Omega_\epsilon/\delta_i}   \frac{\alpha_4^4\delta_i^2\delta_j\delta_\ell}{ (1+|y|^2)^2(\delta_j^2+\delta_i^2|y|^2)(\delta_l^2+\delta_i^2|y|^2) }dy\\ 
&=\begin{cases}  O\({\delta_j\delta_l\over \delta_i^2}|\log\delta_i|\)\ \hbox{if}\ i<j<l\\ 
 O\({\delta_ l\over \delta_j} \)\ \hbox{if}\ j<i<l\\
  O\({ \delta_i^2\over \delta_j\delta_l}|\log\delta_i|\)\ \hbox{if}\ j< l<i\\
  \end{cases}\\
  &=o(\delta_1^2)
\end{align*}
(note that this term only appears in $a_2$).Finally, if all the indices are different, then assuming without loss of generality  that $i>j>h>l$, 
\begin{align*}
\int\limits_{\Omega_\eps} U_i U_j U_h U_l \, dx&=  \int_{\Omega_\eps}\frac{\alpha_4 \delta_i}{\delta_i^2+|x|^2}\frac{\alpha_4\delta_j}{\delta_j^2+|x|^2}\frac{\alpha_4 \delta_h}{\delta_h^2+|x|^2}\frac{\alpha_4\delta_l}{\delta_l^2+|x|^2}\, dx   \\
				&\leq \int_{\Omega_{\eps}/\delta_i} \frac{\alpha_4^4\delta_i^3 \delta_j\delta_h\delta_l}{(1+|y|^2)\delta_h^2\delta_l^2 |y|^2}\, dy =\textrm{O} \left(\delta_1^2\frac{\delta_i}{\delta_h}\frac{\delta_j}{\delta_{l}} |\log \delta_i|\right) \, dy = \textrm{o}(\delta_1^2).
\end{align*}

\smallbreak

\textit{Estimates for $a_3$.} Arguing as in the proof of Lemma 7.2 in \cite{GeMuPi} (see equation (7.6) therein), the term $a_3$ is quadratic in $\phi_1$ and $\phi_2$,  and so by Proposition \ref{prop:C^1phi} it satisfies $a_2=o(\delta_1^2)$.
 
 \smallbreak
 
\textit{Estimates for $a_4$.} The first term in $a_4$ can be estimated as
\begin{align*}
|\int_{\Omega_\eps} \left(\sum_{j\in I_i} f( U_j )-f(\sum_{j\in I_i} P_\eps U_j) \right)\phi_i  \, dx|&\le  \|\sum_{j\in I_i} f( U_j )-f(\sum_{j\in I_i} P_\eps U_j)\|_{L^{\frac43}(\Omega_\eps)}\|\phi_i\|_{L^ 4(\Omega_\eps)}\\
&=O(\delta_1^2)\|\phi_i\|_{H^1_0 (\Omega_\eps)}=o(\delta_1^2),\end{align*}
because, by \eqref{st I1 7},
  $$\|\sum_{j\in I_i} f( U_j )-f(\sum_{j\in I_i} P_\eps U_j)\|_{L^{\frac43}(\Omega_\eps)}=O(\delta_1^2).$$  
 (this term corresponds to the quantity $A$ defined in the proof of Proposition \ref{prop: stima rem}). As for the second term in $a_4$, given $i,j\in \{1,2\}$ with $i\neq j$ and $h,l\in I_i$ with $h<l$, $m\in I_j$, by Lemma \ref{lem: int a 3} we have
 \begin{align*}
 \left|\,\int\limits_{\Omega_\eps} P_\eps U_h P_\eps U_l P_\eps U_m \phi_j \right| &\leq \| P_\eps U_h P_\eps U_l P_\eps U_m \|_{L^{4/3}} \|\phi_j\|_{L^4}\leq \| U_h  U_l  U_m \|_{L^{4/3}} \|\phi_j\|_{L^4}\\
 					&= \begin{cases}
					\text{O}\left(\frac{\delta_l}{\delta_h}\right) \|\phi_j\|_{H^1_0}  & \text{ if } h<l<m  \\
					\text{O}\left(\frac{\delta_m}{\delta_k}\right)		\|\phi_j\|_{H^1_0}  & \text{ if } h<m<l\\
					\text{O}\left(\frac{\delta_h}{\delta_m}\right)		\|\phi_j\|_{H^1_0}  & \text{ if } m<h<l
					\end{cases}\\
					&=\text{o}(\delta_1^2)
 \end{align*}
 since, for instance when $h<l<m$,
 \begin{equation}\label{eq:crucial_ln}
 \text{O}\left(\frac{\delta_l}{\delta_h}\right) \|\phi_j\|_{H^1_0}\delta_1^{-2} =\text{O}(1)\eps^{l-h-1}\left(\ln \frac{1}{\eps}\right)^{-\frac{k+l-h}{h+1}}\to 0.
 \end{equation}
 
 \smallbreak
 
 \textit{Estimates for $a_5$.} Starting from the first term, by Proposition \ref{prop:C^1phi} and Lemma \ref{lem: interazione 2} we see that, given $i,j\in \{1,2\}$ with $i\neq j$, and $h,l\in I_i$ with $h<l$,
 \begin{align*}
 \left|\, \int\limits_{\Omega_\eps} P_\eps U_h P_\eps U_l \phi_j^2 \right|  \leq C \|  U_h U_l\|_{L^2} \|\phi_j\|_{H^1_0}^2=\text{o}\left(\frac{\delta_l\delta_1^2}{\delta_h}\right)=\text{o}(\delta_1^2).
 \end{align*}
 Similarly, the second term in $a_5$ is also an $\text{o}(\delta_1^2)$.
 
 \smallbreak
 
  \textit{Estimates for $a_6$.} We have, by Proposition \ref{prop:C^1phi}  and Lemma \eqref{lem: interaction 0},
  \begin{align*}
\left|\, \int\limits_{\Omega_\eps}  P_\eps U_h \phi_i \phi_j^2 \right|\leq C \|U_h\|_{L^4} \|\phib\|_{H^1_0}^3=\text{o}(\delta_1^2)
  \end{align*}
  while the second and third terms in $a_6$ are respectively of second and fourth order in $\phi$, thus an $\text{o}(\delta_1^2)$. This ends the proof.
\end{proof}

As a direct consequence of \eqref{eq:reducedenergy}, Corollary \ref{coro:singleeq_estimate}, Corollary \ref{coro:interaction_estimate} and Lemma \ref{lemma:remainder}, we have the following result, which gives us the leading term of the expansion of the reduced energy.

\begin{proposition}\label{coro:expansion}
We have
\begin{multline}
\widetilde J_\eps (\mf{d})=k \frac{B}{4}+\(\frac{A^2}{2} \tau(0)d_1^2 +\frac{\Gamma}{2}\left(\frac{1}{d_k}\right)^2-\frac{\beta}{2}\frac{\alpha_4^4}{k+1}|\mathbb{S}^{3}|  \sum_{i=1}^{k-1}\(\frac{d_{i+1}}{d_{i}}\)^2\)\eps^\frac{2}{k+1}\left(\log \frac{1}{\eps}\right)^\frac{k-1}{k+1} \\
			 + \textrm{o}\left(\eps^\frac{2}{k+1}\left(\log \frac{1}{\eps}\right)^\frac{k-1}{k+1}\right) \label{eq:expansion}
\end{multline}
as $\eps\to 0$, uniformly in $\mf{d}\in X_\eta$.
\end{proposition}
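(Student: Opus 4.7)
The proposition is a direct assembly of three prior results, so my plan is to start from the decomposition \eqref{eq:reducedenergy} and substitute the asymptotic expansions already established. Recall that
\[
\widetilde J_\eps (\mf{d}) = \sum_{i=1}^k \int_{\Omega_\eps} \left(\tfrac{1}{2}|\nabla P_\eps U_i|^2-\tfrac{1}{4}(P_\eps U_i)^4\right) - \frac{\beta}{2}\sum_{i\in I_1,j\in I_2} \int_{\Omega_\eps} (P_\eps U_i)^2(P_\eps U_j)^2 + R(\mf{d},\eps).
\]
This decomposition separates the contribution of each single bubble (the ``single-equation" piece), the quadratic interactions across components, and an interaction-type remainder produced by the cross terms involving $\phi_i$'s and mixed bubbles within the same component.

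\textbf{Step 1: Diagonal part.} First I apply Corollary \ref{coro:singleeq_estimate}, which processes the sum over single bubbles. Using the fact that $\delta_i = \textrm{o}(\delta_1)$ for $i\geq 2$ and $\eps/\delta_i = \textrm{o}(\eps/\delta_k)$ for $i\leq k-1$, only the extreme indices $i=1$ and $i=k$ survive at leading order, producing the terms $\tfrac{A^2}{2}\tau(0)d_1^2$ and $\tfrac{\Gamma}{2} d_k^{-2}$ multiplied by the common scale $\eps^{2/(k+1)}(\log(1/\eps))^{(k-1)/(k+1)}$, together with the constant $kB/4$.

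\textbf{Step 2: Interaction part.} Then I invoke Corollary \ref{coro:interaction_estimate}: since $I_1$ and $I_2$ partition $\{1,\dots,k\}$ with no two consecutive integers inside the same set, the double sum over pairs in $I_1\times I_2$ is dominated by the consecutive pairs $(i,i+1)$ (all other pairs give a genuinely higher-order contribution because $(\delta_i/\delta_j)^2|\log(\delta_j/\delta_i)| = \textrm{o}((\delta_{j+1}/\delta_j)^2|\log(\delta_j/\delta_{j+1})|)$ when $|i-j|>1$). Substituting the computation of $(\delta_{i+1}/\delta_i)^2\log(\delta_i/\delta_{i+1})$ from the definition \eqref{eq:rates} shows that this term, weighted by $-\beta/2$, contributes $-\tfrac{\beta}{2}\tfrac{\alpha_4^4}{k+1}|\mathbb S^3|\sum_{i=1}^{k-1}(d_{i+1}/d_i)^2$ times the same scale $\eps^{2/(k+1)}(\log(1/\eps))^{(k-1)/(k+1)}$.

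\textbf{Step 3: Remainder.} Finally, Lemma \ref{lemma:remainder} supplies $R(\mf d,\eps) = \textrm{o}(\delta_1^2) = \textrm{o}(\eps^{2/(k+1)}(\log(1/\eps))^{(k-1)/(k+1)})$, of the same order as the $\textrm{o}$-terms in Steps 1 and 2. Adding the three contributions and absorbing the error terms into a single $\textrm{o}(\eps^{2/(k+1)}(\log(1/\eps))^{(k-1)/(k+1)})$ yields \eqref{eq:expansion}. Since all estimates in Corollaries \ref{coro:singleeq_estimate} and \ref{coro:interaction_estimate} and in Lemma \ref{lemma:remainder} are uniform for $\mf d\in X_\eta$, so is the final expansion. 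No step is a serious obstacle here; the substantive work has been done upstream, and what remains is bookkeeping: matching the common scale $\eps^{2/(k+1)}(\log(1/\eps))^{(k-1)/(k+1)}$ across all three terms and checking that the discarded contributions are indeed $\textrm{o}(1)$ relative to that scale.
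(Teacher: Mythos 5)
Your proposal is correct and follows exactly the route the paper intends: the paper states this proposition as a direct consequence of the decomposition \eqref{eq:reducedenergy} together with Corollary \ref{coro:singleeq_estimate}, Corollary \ref{coro:interaction_estimate}, and Lemma \ref{lemma:remainder}, and your three steps spell out precisely that assembly, with the common scale $\eps^{2/(k+1)}(\log(1/\eps))^{(k-1)/(k+1)}$ matched across all three contributions. No gaps.
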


\begin{remark}\label{rem:remark_section_reduced}
Suppose that, instead of dealing with the set of odd and even numbers of $\{1,\ldots,k\}$ in the two equation case, we are dealing with  system \eqref{eq:gensystem} with $m$ equations and with a general partition $I_1,\ldots, I_m$ satisfying (1)--(5). Then the reduced energy reads as
\begin{align*}
\widetilde J_\eps (\mf{d})&=J_\eps\left(\sum\limits_{j\in I_1}P_{\eps}U_{\delta_j}+\phi_1^{\mf{d},\eps},\ldots, \sum\limits_{j\in I_m}P_{\eps}U_{\delta_j}+\phi_m^{\mf{d},\eps}\right)\\
					&=\sum_{i=1}^k \int_{\Omega_\eps} \left(\frac{1}{2}|\nabla P_\eps U_i|^2-\frac{1}{4}(P_\eps U_i)^4\right) - \frac{\beta}{2}\mathop{\sum_{h_1,h_2=1}^k}_{h_1\neq h_2} \mathop{\sum_{i\in I_{h_1}}}_{j\in I_{h_2}} \int\limits_{\Omega_\eps} (P_\eps U_i)^2(P_\eps U_j)^2+ R(\mf{d},\eps)
\end{align*}
with
\[
R(\mf{d},\eps)=J_\eps\left(\sum\limits_{j\in I_1}P_{\eps}U_{\delta_j}+\phi_1^{\mf{d},\eps},\ldots, \sum\limits_{j\in I_m}P_{\eps}U_{\delta_j}+\phi_m^{\mf{d},\eps}\right) - \mathop{\sum_{h_1,h_2=1}^k}_{h_1\neq h_2} \mathop{\sum_{i\in I_{h_1}}}_{j\in I_{h_2}} J_\eps\left(P_\eps U_i,P_\eps U_j\right)
\]
With an analogous proof of the one of Lemma \ref{lemma:equivalent}, we can show that critical points of this functional correspond to solutions of \eqref{eq:gensystem}. The choice of rates is still \eqref{eq:rates} in the general case. As pointed out in the proofs of Corollary \ref{coro:interaction_estimate} and Lemma \ref{lemma:remainder}, besides the exact shape of the rates, the other crucial step is that each set $I_h$ does not contain two consecutive integers. Since this property is valid for a general partition (it corresponds to property (5)), it is straightforward to adapt the proofs  of these results and show that the quantity
\[
\mathop{\sum_{h_1,h_2=1}^k}_{h_1\neq h_2} \mathop{\sum_{i\in I_{h_1}}}_{j\in I_{h_2}} \int\limits_{\Omega_\eps} (P_\eps U_i)^2(P_\eps U_j)^2
\]
has the asymptotic expansion \eqref{eq:estimate_mixedterms}, and that $R$ satisfies \eqref{eq:remainder}. Combining this with Corollary \ref{coro:singleeq_estimate} yields that, in the general case, the reduced expression has the exact same expansion, namely \eqref{eq:expansion}.
\end{remark}


 \section{Proof of the main result}\label{sec:proof} 

In this section we conclude the proof of Theorem \ref{thm:main2}. Define $\Psi:(\R^+)^k\to \R$ as
\[
\Psi(x_1,\ldots, x_k)=a_1 x_1 +\frac{a_2}{x_k}+a_3  \sum_{i=1}^{k-1} \frac{x_{i+1}}{x_{i}}
\]
where, since $\beta<0$,
\[
a_1=\frac{A^2}{2} \tau(0)>0,\quad a_2=\frac{\Gamma}{2}>0,\quad a_3=-\frac{\beta}{2}\frac{\alpha_4^4}{k+1}|\mathbb{S}^{3}|>0.
\]
\begin{lemma}\label{lemma:global_minimum}
The function $\Psi$ achieves a unique global minimum at $(x_1^*, \ldots, x_k^*)$, with
\[
x_i^*:= \(\frac{a_2}{a_3}\)^\frac{i}{k+1}\( \frac{a_3}{a_1}\)^\frac{k+1-i}{k+1}= \Gamma^\frac{i}{k+1}\(A^2 \tau(0)\)^\frac{i-1-k}{k+1}\(\frac{|\beta| \alpha_4^4 |\mathbb{S}^3|}{k+1}\)^\frac{k+1-2i}{k+1}>0
\]
for $i=1,\ldots, k$. In particular, the conclusion of Theorem \ref{thm:main2} holds true.
\end{lemma}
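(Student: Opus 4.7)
The plan is to exploit the variational structure of $\Psi$ via a logarithmic change of variables, compute the unique minimum explicitly, and then transfer existence of a critical point to the reduced functional $\widetilde{J}_\eps$ using the asymptotic expansion of Proposition \ref{coro:expansion}. First I would set $x_i = e^{y_i}$, rewriting
\[
\Psi(e^{y_1},\ldots,e^{y_k}) = a_1 e^{y_1} + a_2 e^{-y_k} + a_3 \sum_{i=1}^{k-1} e^{y_{i+1}-y_i}.
\]
Each summand is convex in $y \in \R^k$, being the composition of the exponential with a linear functional, and the $k+1$ linear functionals appearing ($y_1$, $-y_k$, and $y_{i+1}-y_i$ for $i=1,\ldots,k-1$) span $(\R^k)^*$. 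Hence the sum is strictly convex on $\R^k$; moreover, along any unbounded direction in $\R^k$ at least one of these linear functionals tends to $+\infty$, forcing the corresponding exponential to blow up. This yields strict convexity and coercivity simultaneously, so $\Psi$ admits a unique critical point on $(\R^+)^k$, which is also its unique global minimum.

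Next I would compute this critical point by solving $\nabla \Psi(x^*)=0$. The middle equations $a_3/x_{i-1}^* = a_3 x_{i+1}^*/(x_i^*)^2$ give $(x_i^*)^2 = x_{i-1}^* x_{i+1}^*$, so the minimizer is geometric, $x_i^* = x_1^* r^{i-1}$. The boundary equations $a_1(x_1^*)^2 = a_3 x_2^*$ and $a_3(x_k^*)^2 = a_2 x_{k-1}^*$ then yield $r = a_1 x_1^*/a_3$ and $a_3 x_1^* r^k = a_2$, whence $(x_1^*)^{k+1} = a_2 a_3^{k-1}/a_1^{k}$ and, after substitution and simplification of exponents,
\[
x_i^* = a_2^{i/(k+1)}\, a_3^{(k+1-2i)/(k+1)}\, a_1^{(i-k-1)/(k+1)}.
\]
Plugging in the explicit values of $a_1,a_2,a_3$ (and noting that the three exponents of the factor $2$ in the denominators sum to zero, so those factors cancel) gives the closed-form expression in the statement. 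Moreover, since the reduced energy of Proposition \ref{coro:expansion} depends on $\mf{d}$ only through $d_i^2$, setting $d_j^* := \sqrt{x_j^*}$ recovers exactly the value of $d_j^*$ in Theorem \ref{thm:main2}.

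To deduce the existence statement of Theorem \ref{thm:main2}, I would use coercivity to pick $\eta>0$ small enough that $\mf{d}^* \in X_\eta$ and $\inf_{\mf{d} \in \partial X_\eta} \Psi(d_1^2,\ldots,d_k^2) > \Psi(x^*) + 1$ (possible since $\Psi$ is coercive on $(\R^+)^k$ and $\partial X_\eta$ pushes at least one coordinate to $\eta$ or $1/\eta$). Proposition \ref{coro:expansion} gives, uniformly on $\overline{X_\eta}$,
\[
\widetilde J_\eps(\mf{d}) = \tfrac{kB}{4} + \eps^{2/(k+1)} \bigl(\log \tfrac{1}{\eps}\bigr)^{(k-1)/(k+1)} \bigl[\Psi(d_1^2,\ldots,d_k^2)+\mathrm{o}(1)\bigr],
\]
so for $\eps$ small enough $\inf_{\partial X_\eta} \widetilde J_\eps > \widetilde J_\eps(\mf{d}^*)$, and $\widetilde J_\eps$ attains its minimum over the compact set $\overline{X_\eta}$ at some interior point $\mf{d}^\eps$, which is therefore a critical point of $\widetilde J_\eps$. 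By Lemma \ref{lemma:equivalent}, this yields a solution of \eqref{eq:systemwithf} of the ansatz form \eqref{eq:ansatz}, with $\|\phi_i^{\mf{d}^\eps,\eps}\|_{H^1_0} \to 0$ thanks to Proposition \ref{prop:C^1phi}; finally, a standard subsequence argument based on the uniform expansion and the uniqueness of the minimizer of $\Psi$ forces $\mf{d}^\eps \to \mf{d}^*$. The only conceptually delicate point is guaranteeing that the minimizer of the perturbed functional stays strictly inside $X_\eta$, which is precisely what the coercivity of $\Psi$ on $(\R^+)^k$ buys us.
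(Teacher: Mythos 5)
Your proposal is correct, and the algebra for the critical point agrees exactly with the paper's. The interesting difference lies in how you establish existence and uniqueness of the global minimum. The paper argues directly: it checks that $\Psi(\mf{x})\to\infty$ both as $|\mf{x}|\to\infty$ and as any coordinate $x_i\to 0^+$ (hence a minimizer exists), and then shows by explicit back-substitution that the critical-point equations admit a unique solution, which forces the minimizer to be unique. You instead make the logarithmic change of variables $x_i=e^{y_i}$, recognize that $\Psi$ becomes a sum of exponentials of linear functionals whose duals span $\R^k$, and conclude strict convexity and coercivity at one stroke; this is structurally cleaner and explains \emph{why} the critical point is unique before solving any equations. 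One small technical caveat: the claim that ``along any unbounded direction at least one $\ell_i\to+\infty$'' is, as stated, a statement about rays (which you can verify: if $\ell_i(v)\le 0$ for all $i$ then $v_1\le 0$, $v_k\ge 0$, and $v_1\ge v_2\ge\cdots\ge v_k$, forcing $v=0$); to upgrade ray-coercivity to full coercivity one invokes convexity of the sublevel sets, which is available here. The final step -- deducing the conclusion of Theorem~\ref{thm:main2} from the uniform expansion of $\widetilde J_\eps$ and Lemma~\ref{lemma:equivalent} via a trapping argument in a compact neighborhood of $\mf{d}^*$ -- matches the paper's argument.
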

\begin{proof}
First of all, observe that
\[
\Psi(\mf{x})\to \infty \qquad \text{ as } |\mf{x}|\to \infty,
\]
since $a_1,a_2,a_3>0$ and $a_1x_1\to \infty$ if $x_1\to \infty$, $x_{i+1}/x_i\to \infty$ if $x_{i+1}\to \infty$ and $x_i$ is bounded. Moreover, given $\bar{ \mf{x}}\in (\R^+_0)^k$ with $\bar x_i=0$, 
\[
\Psi(\mf{x})\to \infty \qquad \text{ as } \mf{x}\to \bar{\mf{x}},
\]
since in this case at least one of the $k+1$ terms in the expression of $\Psi$ divergences to $\infty$. In conclusion, $\Psi$ admits a global minimum. Let us see next that it is unique, and deduce its expression.

We have
\[
\frac{\partial \Psi}{\partial x_1}=a_1-a_3\frac{x_2}{x_1^2},\qquad , \frac{\partial \Psi}{\partial x_k}=-\frac{a_2}{x_k^2}+\frac{a_3}{x_{k-1}}
\]
and
\[
\frac{\partial \Psi}{\partial x_j}=a_3 \(\frac{1}{x_{j-1}}-\frac{x_{j+1}}{x_j^2}\).
\qquad (j=2,\ldots, k-1)
\]
Hence, at a critical point,
\[
x_1^2 a_1=a_3 x_2,\quad x_j^2=x_{j-1}x_{j+1}\ \ (j=2,\ldots, k-1),\quad a_3 x_k^2=a_2x_{k-1}
\]
which yields, by direct substitution,
\[
x_j=\(\frac{a_1}{a_3}\)^{j-1} x_1^j\ \ (j=2,\ldots, k),\quad a_3\(\frac{a_1}{a_3}\)^{2k-2}x_1^{2k}= a_2 \(\frac{a_1}{a_3}\)^{k-2}x_1^{k-1}.
\]
The last identity gives 
\[
x_1=\(\frac{a_2}{a_3}\)^\frac{1}{k+1} \(\frac{a_3}{a_1}\)^\frac{k}{k+1},
\]
and the rest of the proof follows. \end{proof}

\smallbreak

\begin{proof}[End of the proof of Theorem \ref{thm:main2}] From the definition of $\Psi$ and by Proposition \ref{coro:expansion}, we have
\[
J(\mf{d})=k \frac{B}{4}+\eps^\frac{2}{k+1}\left(\log \frac{1}{\eps}\right)^\frac{k-1}{k+1}  \( \Psi(d_1^2,\ldots, d_k^2)+ \textrm{o}(1)\)
\]
where $\textrm{o}(1)\to 0$ as $\eps\to 0$, uniformly in $\mf{d}\in X_\eta$. Let $d_i^*:=\sqrt{x_i^*}$ (cf. Lemma \ref{lemma:global_minimum}), and take $\eta>0$ small enough so that $(d_1^*,\ldots, d_k^*)\in X_\eta$. Let $K\Subset X_\eta$ be a compact set such that $((d_1^*)^2,\ldots, (d_k^*)^2)\in \text{int} K$ and
\[
\Psi((d_1^*)^2,\ldots, (d_k^*)^2)=\min_K \Psi< \min_{\partial K} \Psi.
\]
Then
\[
\min_{K} J_\eps\leq J_\eps(\mf{d}^*)<\min_{\partial K} J_\eps
\]
Therefore $J_\eps|_K$ has a minimizer $\mf{d}_\eps$, which converves to $\mf{d}^*$ (by the uniqueness stated in Lemma  \ref{lemma:global_minimum}). Thus $J_\eps'(\mf{d}^*)=0$. By invoking Lemma \ref{lemma:equivalent}, the proof is finished.
\end{proof}

\begin{remark}\label{rem:section_conclusion}
The proof of the general case, Theorem \ref{thm:main}, follows exactly in the same way since, as we commented on Remark \ref{rem:remark_section_reduced}, the reduced functional is the same as in the two equation case.
\end{remark}


\appendix

\section{Asymptotic estimates}
In this appendix we collect several important asymptotic estimates which are used in the paper.We assume in every statement that $N=4$, that is, $\Omega\subset \R^4$.

The following two results are taken from \cite{GeMuPi}, see Lemmas 3.1 and 3.2 therein.

\begin{lemma}\label{lemma:estimates}
Let $a\in \Omega$, $r>0$, and $\tau\in \R^4$. Assume that $\xi=a+\delta \tau$, with $\delta=\delta(\eps)\to 0$ and $\eps/\delta \to 0$ as $\eps\to 0^+$.
Then, for $A=\int_{\R^N}U_{1,0}^3=\frac{\alpha_4}{\gamma_4}$ and $R$ defined by
\[
P_{\eps} U_{\delta,\xi}=U_{\delta,\xi}-A \delta H(x,\xi)-\frac{\alpha_4}{\delta(1+|\tau|^2)}\left(\frac{r\eps}{|x-a|}\right)^{2}+R(x)
\]
there exists $C=C(\tau,\textrm{dist}(a,\partial \Omega))>0$ such that, for any $x \in \Omega \setminus B_{r \eps}(a)$,
\begin{align*}
|R(x)|&\leq C \delta \left[\frac{\eps^{2}(1+\eps \delta^{-3})}{|x-a|^{2}}+\delta^2+\left(\frac{\eps}{\delta}\right)^{2}\right]\\
|\partial_{\delta} R(x)|&\leq C \left[ \frac{\eps^{2}(1+\eps \delta^{-3})}{|x-a|^{2}}+\delta^2+\left(\frac{\eps}{\delta}\right)^{2}\right]\\
|\partial_{\tau_i} R(x)| & \leq C \delta^2\left[ \frac{\eps^{2}(1+\eps \delta^{-4})}{|x-a|^{2}}+\delta^2+\frac{\eps^{2}}{\delta^{3}}\right]
\end{align*}
\end{lemma}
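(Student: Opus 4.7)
The strategy is to write $U_{\delta,\xi} - P_\eps U_{\delta,\xi}$ as a harmonic function in $\Omega_\eps$, identify two explicit harmonic ``profiles'' matching its boundary values to leading order (one for each connected component of $\partial\Omega_\eps$), and control the remainder by a maximum principle argument.

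\textbf{Step 1 (Setting up the harmonic problem).} Let $\varphi_\eps := U_{\delta,\xi} - P_\eps U_{\delta,\xi}$. Since $-\Delta U_{\delta,\xi} = U_{\delta,\xi}^3 = -\Delta P_\eps U_{\delta,\xi}$ in $\Omega_\eps$, the difference $\varphi_\eps$ is harmonic in $\Omega_\eps$ with $\varphi_\eps = U_{\delta,\xi}$ on $\partial\Omega \cup \partial B_{r\eps}(a)$. Writing
\[
R(x) := \varphi_\eps(x) - A\delta H(x,\xi) - \frac{\alpha_4}{\delta(1+|\tau|^2)}\left(\frac{r\eps}{|x-a|}\right)^{2},
\]
and recalling that $H(\cdot,\xi)$ is harmonic in $\Omega$ and that $|x-a|^{-2}$ is harmonic in $\R^4\setminus\{a\}$, the function $R$ is harmonic in $\Omega_\eps$.

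\textbf{Step 2 (Boundary estimates).} On $\partial\Omega$, where $H(x,\xi)=\gamma_4/|x-\xi|^2$ and $A=\alpha_4/\gamma_4$, a direct computation gives
\[
U_{\delta,\xi}(x)-A\delta H(x,\xi) = -\frac{\alpha_4\,\delta^{3}}{|x-\xi|^2\bigl(\delta^2+|x-\xi|^2\bigr)} = O(\delta^{3}),
\]
while $\frac{\alpha_4}{\delta(1+|\tau|^2)}(r\eps/|x-a|)^{2}=O(\eps^2/\delta)$, so $|R|=O(\delta^{3})+O(\eps^2/\delta)$ on $\partial\Omega$. On $\partial B_{r\eps}(a)$, parametrising $x=a+r\eps\omega$, the identity $\delta^2+|x-\xi|^2=\delta^2(1+|\tau|^2)[1+s]$ with $s=O(\eps/\delta)$ gives a Taylor expansion
\[
U_{\delta,\xi}(x) = \frac{\alpha_4}{\delta(1+|\tau|^2)}\bigl(1-s+O(s^2)\bigr),
\]
which cancels the explicit $|x-a|^{-2}$-term to leading order, leaving an $O(\eps/\delta^2)$ error; together with $|A\delta H(x,\xi)|=O(\delta)$ this yields $|R|=O(\eps/\delta^2)+O(\delta)$ on $\partial B_{r\eps}(a)$.

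\textbf{Step 3 (Supersolution and maximum principle).} The upper bound is tailored by the harmonic function
\[
\psi(x) := C\delta\Bigl[\delta^2+\bigl(\eps/\delta\bigr)^2 + \eps^2(1+\eps\delta^{-3})/|x-a|^2\Bigr],
\]
which is harmonic in $\R^4\setminus\{a\}$. On $\partial\Omega$ the last term is bounded, so $\psi\gtrsim C(\delta^3+\eps^2/\delta)$; on $\partial B_{r\eps}(a)$ the substitution $|x-a|=r\eps$ shows $\psi\gtrsim C(\delta+\eps/\delta^2)$. Choosing $C$ large enough makes $\pm R -\psi$ nonpositive on $\partial\Omega_\eps$, and the maximum principle for the harmonic function $\pm R -\psi$ gives the desired bound on $R$ in the whole of $\Omega_\eps$.

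\textbf{Step 4 (Derivatives in $\delta$ and $\tau_i$).} For $\partial_\delta R$ and $\partial_{\tau_i} R$, differentiate the defining relation $-\Delta P_\eps U_{\delta,\xi}=U_{\delta,\xi}^3$ with respect to the parameters (using the chain rule for $\xi=a+\delta\tau$). The resulting function is again harmonic in $\Omega_\eps$ (differentiation in parameters commutes with $\Delta$), and its boundary values can be computed by differentiating the explicit expressions in Step 2. One then produces analogous supersolutions, noting the rescalings $\partial_\delta U_{\delta,\xi}=O(U/\delta)$ and $\partial_{\tau_i}U_{\delta,\xi}=O(\delta\,\partial_{\xi_i}U_{\delta,\xi})$, which explain the extra factors $1/\delta$ and $\delta$ appearing in the stated estimates for $\partial_\delta R$ and $\partial_{\tau_i}R$ respectively.

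The main obstacle is purely bookkeeping: keeping the Taylor expansion on $\partial B_{r\eps}(a)$ organised to the correct order so that the constant $\alpha_4/[\delta(1+|\tau|^2)]$ cancels exactly against $U_{\delta,\xi}(x)|_{|x-a|=r\eps}$, and designing the supersolution $\psi$ to accommodate simultaneously the two very different boundary behaviours. Once the boundary estimates are sharp, the maximum principle does the rest; the derivative estimates are a mechanical repetition with the chain rule.
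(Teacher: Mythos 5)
The paper does not prove this lemma: it is imported verbatim from Ge--Musso--Pistoia \cite{GeMuPi} (Lemma 3.1 therein), so there is no in-paper proof to compare against. Your plan is nonetheless the standard and correct route, and it is in the spirit of the cited source: $R$ is harmonic in $\Omega_\eps$ (difference of two functions with the same Laplacian, minus two explicitly harmonic profiles in dimension $4$), its boundary values are computed on the two components $\partial\Omega$ and $\partial B_{r\eps}(a)$, and a barrier $\psi=C\delta[\delta^2+(\eps/\delta)^2+\eps^2(1+\eps\delta^{-3})|x-a|^{-2}]$ (harmonic, dominating $|R|$ on both boundary components) gives the interior bound by the maximum principle. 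Your boundary computations check out: on $\partial\Omega$ the exact cancellation $U_{\delta,\xi}-A\delta H=-\alpha_4\delta^3/(|x-\xi|^2(\delta^2+|x-\xi|^2))=O(\delta^3)$ is correct (and is why $A=\alpha_4/\gamma_4$), and on $\partial B_{r\eps}(a)$ the expansion in $s=O(\eps/\delta)$ does cancel the $|x-a|^{-2}$-profile at leading order and leave $O(\eps/\delta^2)+O(\delta)$. One caution on Step 4: the stated bounds for $\partial_\delta R$ and $\partial_{\tau_i}R$ are \emph{not} simple $\delta^{\pm1}$-rescalings of the $R$-bound --- note the $\eps\delta^{-4}$ versus $\eps\delta^{-3}$ and $\eps^2\delta^{-3}$ versus $\eps^2\delta^{-2}$ in the $\partial_{\tau_i}$ estimate --- so ``mechanical repetition'' undersells the bookkeeping; you must actually differentiate the boundary expansions and build separate barriers, and the exponents you get will depend on which terms of $s$ survive differentiation. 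Also, to make the maximum-principle step airtight, you should note explicitly that $R$ (and its parameter derivatives) extend continuously to $\overline{\Omega_\eps}$, which is clear since $P_\eps U_{\delta,\xi}$ is smooth up to the boundary and the explicit profiles are smooth away from $a$.
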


\begin{lemma}\label{lemma:estimates2}
Under the assumptions and notations of the previous lemma, we have the following estimate:
\[
\int_{\Omega_\eps}U_{\delta,\xi}^2(P_{\eps}U_{\delta,\xi}-U_{\delta,\xi})^2=
\textrm{O}\left(\delta^4 |\log \delta|+ \left(\frac{\eps}{\delta}\right)^4|\log\frac{\eps}{\delta}|\right).
\]
%
\end{lemma}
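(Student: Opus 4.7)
\textbf{Proof proposal for Lemma \ref{lemma:estimates2}.}
The plan is to expand the square using the explicit decomposition provided by Lemma \ref{lemma:estimates}, namely
\[
P_\eps U_{\delta,\xi}-U_{\delta,\xi} = -A\delta H(x,\xi) - \frac{\alpha_4}{\delta(1+|\tau|^2)}\frac{r^2\eps^2}{|x-a|^2}+R(x),
\]
and then estimate the six resulting integrals (three squared terms and three cross terms) on $\Omega_\eps$ one by one, verifying that the two leading contributions are exactly the two terms appearing in the statement, and that everything else is of higher order.

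The first main integral is $\int_{\Omega_\eps} U_{\delta,\xi}^2\,\delta^2 H(x,\xi)^2\,dx$. Since $\xi=a+\delta\tau$ with $a$ in a compact subset of $\Omega$, the regular part $H(\cdot,\xi)$ is uniformly bounded on $\Omega$, so this contribution is controlled by $\delta^2\int_{\Omega_\eps}U_{\delta,\xi}^2$. Using the change of variables $x=\xi+\delta y$, one has
\[
\int_{\Omega_\eps} U_{\delta,\xi}^2 = \alpha_4^2\delta^2\int_{(\Omega_\eps-\xi)/\delta}\frac{dy}{(1+|y|^2)^2} = \mathrm{O}\bigl(\delta^2|\log\delta|\bigr),
\]
because the scaled domain is contained in a ball of radius $\mathrm{O}(1/\delta)$ and the integrand decays like $|y|^{-4}$ for large $|y|$. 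This yields the $\mathrm{O}(\delta^4|\log\delta|)$ term. The second main integral is $\int_{\Omega_\eps} U_{\delta,\xi}^2 \cdot \frac{\eps^4}{\delta^2|x-a|^4}\,dx$. Rescaling $x=a+\delta y$ (and using $|x-\xi|\sim|x-a|$ up to constants since $\xi-a=\mathrm{O}(\delta)$) gives
\[
\alpha_4^2\frac{\eps^4}{\delta^4}\int_{\eps/\delta\lesssim|y|\lesssim 1/\delta}\frac{dy}{(1+|y|^2)^2|y|^4} = \mathrm{O}\Bigl(\tfrac{\eps^4}{\delta^4}\bigl|\log\tfrac{\eps}{\delta}\bigr|\Bigr),
\]
the logarithm arising from the singularity at the inner radius $\eps/\delta\to 0^+$ where the integrand behaves like $|y|^{-4}$.

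The remaining steps are to bound the cross term and the $R^2$ term. Using Lemma \ref{lemma:estimates}, $|R(x)|\le C\delta\bigl[\eps^2(1+\eps\delta^{-3})|x-a|^{-2}+\delta^2+(\eps/\delta)^2\bigr]$, so $\int U_{\delta,\xi}^2 R^2$ splits into an integral with $|x-a|^{-4}$ weight (which by the same scaling as above yields $\mathrm{O}((\eps/\delta)^4|\log(\eps/\delta)|)$ multiplied by the extra factor $\delta^2$, hence genuinely lower order) and integrals weighted by $\delta^2(\delta^2+(\eps/\delta)^2)^2\int U_{\delta,\xi}^2 = \mathrm{O}(\delta^4|\log\delta|\cdot(\delta^2+(\eps/\delta)^2)^2)$, again lower order. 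Cross terms between the three components are estimated by Cauchy--Schwarz against the two leading bounds, producing geometric means of them that are dominated by their sum. Finally, under the assumption $\eps/\delta\to 0$ one has $\bigl|\log(\eps/\delta)\bigr|\ge 1$ eventually, so the absorption is clean.

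The main obstacle I anticipate is the bookkeeping for the cross terms and for $R^2$: it is necessary to track whether the extra $\delta$ factors in $R$ (and the extra $1+\eps\delta^{-3}$ that appears when $\eps$ is not much smaller than $\delta^3$) suffice to send those pieces into the error. The cleanest way is to treat separately the regimes $|x-a|\le\delta$ (where $U_{\delta,\xi}^2\sim\delta^{-2}$ and the $|x-a|^{-2}$ singularity in $R$ is still integrable in four dimensions, producing logarithmic factors that match the statement) and $|x-a|>\delta$ (where $U_{\delta,\xi}^2\sim\delta^2|x-a|^{-4}$ and all remaining integrals are power-like), and then to combine them. No new idea beyond direct estimation is needed.
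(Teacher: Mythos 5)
The paper itself does not prove this lemma; it simply cites \cite{GeMuPi} (Lemmas 3.1--3.2 therein), so there is no internal proof to compare against. Your proposal supplies a correct direct verification via the explicit decomposition from Lemma \ref{lemma:estimates}, and the two leading contributions are computed accurately: the squared regular-part term gives $\delta^2\int U_{\delta,\xi}^2 = \mathrm{O}(\delta^4|\log\delta|)$ via the logarithmic divergence at scale $1/\delta$, and the squared boundary-interaction term gives $\frac{\eps^4}{\delta^4}\int_{\eps/\delta\lesssim|y|}\frac{dy}{(1+|y|^2)^2|y|^4} = \mathrm{O}((\eps/\delta)^4|\log(\eps/\delta)|)$ via the logarithmic divergence at the inner radius $\eps/\delta$. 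The cross terms are correctly absorbed by Cauchy--Schwarz and AM--GM.

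One imprecision worth flagging: when you estimate the $|x-a|^{-4}$-weighted portion of $\int U_{\delta,\xi}^2 R^2$, you say it is the leading boundary term ``multiplied by the extra factor $\delta^2$.'' The actual ratio between this piece and $\int U_{\delta,\xi}^2 \frac{\eps^4}{\delta^2|x-a|^4}$ is $\delta^4(1+\eps\delta^{-3})^2$, not $\delta^2$: the bound on $R$ carries a prefactor $\delta$ rather than $\delta^{-1}$ in front of $\eps^2|x-a|^{-2}$, so squaring gains a factor $\delta^4$, and one must then verify that $\delta^4(1+\eps\delta^{-3})^2 = \delta^4 + 2\eps\delta + (\eps/\delta)^2 \to 0$ uniformly, which does hold under the standing assumption $\eps/\delta\to 0$ (note that $\eps\delta^{-3}$ itself can diverge, so tracking the full factor is not optional). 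Similarly, your stated concern in the final paragraph about the regime $|x-a|\lessgtr\delta$ is already handled automatically by the rescaling $x=a+\delta y$ you used; no separate case analysis is needed. With these small touch-ups the argument is complete.
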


The following concerns the asymptotic study of $L^q$ norms of the bubble. For the proof, see for instance \cite[Lemma A.3]{PistoiaSoave} or \cite[Lemma A.2]{PistoiaTavares}.
\begin{lemma}\label{lem: interaction 0}
We have, as $\delta\to 0$,
\[
\int_\Omega U_\delta^q=\begin{cases}
O(\delta^q) & \text{ if } 0<q<2,\\
O(\delta^2 |\log \delta|) & \text{ if } q=2,\\
O(\delta^{4-q}) & \text{ if } 2<q<\infty,\ q\neq 4,\\
O(1) & \text{ if } q=4.
\end{cases}
\]

\end{lemma}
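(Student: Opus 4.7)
The strategy is the scaling substitution $x = \delta y$, after which
\[
\int_\Omega U_\delta^q \,dx = \alpha_4^q\, \delta^{4-q} \int_{\Omega/\delta} \frac{dy}{(1+|y|^2)^q}.
\]
Since $\Omega$ is a bounded domain containing the origin, there exist $0<r<R$ (depending only on $\Omega$) such that $B_{r/\delta}(0) \subseteq \Omega/\delta \subseteq B_{R/\delta}(0)$ for all sufficiently small $\delta$. Using the fact that the integrand is radially decreasing and passing to polar coordinates, everything reduces to estimating the one-variable integral $\int_0^{R/\delta} s^3(1+s^2)^{-q}\,ds$.

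The behaviour of this integral depends on whether the density $s^3(1+s^2)^{-q} \sim s^{3-2q}$ is integrable at infinity, i.e.\ on the sign of $2q-4$; I would accordingly split into three cases. If $q>2$, the tail is summable and $\int_0^\infty s^3(1+s^2)^{-q}\,ds$ is finite, so $\int_\Omega U_\delta^q = O(\delta^{4-q})$; the case $q=4$ is just the specialisation $\delta^{4-q}=1$. If $q=2$, the explicit primitive $\tfrac12 \bigl[\log(1+s^2) + (1+s^2)^{-1}\bigr]$ of $s^3(1+s^2)^{-2}$ evaluated at $s=R/\delta$ gives $\int_0^{R/\delta} s^3(1+s^2)^{-2}\,ds = O(|\log\delta|)$, yielding the bound $O(\delta^2|\log\delta|)$. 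If $0<q<2$, the same primitive approach (or a direct comparison with $s^{3-2q}$ on $[1,R/\delta]$) produces $\int_0^{R/\delta} s^3(1+s^2)^{-q}\,ds = O((1/\delta)^{4-2q}) = O(\delta^{2q-4})$, and multiplying by the prefactor $\delta^{4-q}$ leaves $O(\delta^q)$.

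The whole argument is mechanical once the scaling is performed; no serious obstacle arises. The only point requiring a moment's care is the lower bound $B_{r/\delta}(0) \subseteq \Omega/\delta$, which is needed to ensure that the comparison with the ball of radius $\sim 1/\delta$ captures the correct leading order in the logarithmic and polynomial-divergence cases (for $q > 2$ even this is unnecessary: boundedness of $\Omega$ alone suffices).
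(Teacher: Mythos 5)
Your proof is correct. The paper itself does not prove this lemma; it simply cites \cite[Lemma A.3]{PistoiaSoave} and \cite[Lemma A.2]{PistoiaTavares}, and the scaling argument you give (substitute $x=\delta y$, reduce to $\int_0^{R/\delta}s^3(1+s^2)^{-q}\,ds$, then split on $q<2$, $q=2$, $q>2$) is precisely the standard route used in such references. One small remark: since the lemma only asserts $O$-bounds, you never actually need the inclusion $B_{r/\delta}\subseteq\Omega/\delta$ — monotonicity of the integral under $\Omega/\delta\subseteq B_{R/\delta}$, which follows from boundedness of $\Omega$ alone, already gives every case (the lower inclusion would only be needed if you wanted to show the $O$-rates are sharp, which is not claimed).
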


The following lemmas will be used many times in order to estimate interaction integrals.

\begin{lemma}\label{lem: interaction 1}
Let $1<q<2<p$ be such that $p+q=4$. Let $\rho_{1} = \rho_1(\eps)>0$, $\rho_2 =  \rho_2(\eps)>0$, be such that 
\[
\frac{\rho_2}{\rho_1} \to 0, \quad \frac{\eps}{\rho_1} \to 0, \quad \frac{\eps}{\rho_2} \to 0
\]
as $\eps \to 0^+$. Then
\[
\int_{\Omega_\eps} U_{\rho_1}^p U_{\rho_2}^q = O\left( \left(\frac{\rho_2}{\rho_1}\right)^q\right), \quad \text{and} \quad \int_{\Omega_\eps} U_{\rho_2}^p U_{\rho_1}^q = O\left( \left(\frac{\rho_2}{\rho_1}\right)^q\right).
\]
\end{lemma}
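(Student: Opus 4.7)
\medskip

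\noindent\textbf{Proof plan for Lemma \ref{lem: interaction 1}.}

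The plan is to bound both integrals by splitting $\Omega_\eps$ into an inner region $\{|x|\leq \rho_1\}$ and an outer region $\{|x|\geq \rho_1\}$, using in each region the pointwise bound $U_\delta(x)\leq \alpha_4\min\{1/\delta,\ \delta/|x|^2\}$, and then carefully combining the exponents via the relation $p+q=4$. The hypothesis $\eps/\rho_1\to 0$, $\eps/\rho_2\to 0$ will ensure that extending integrals from $\Omega_\eps$ to $\R^4$ (or to balls centered at the origin) only affects constants.

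First I would treat $\int_{\Omega_\eps} U_{\rho_1}^p U_{\rho_2}^q$. On the inner region $|x|\leq \rho_1$, I would use $U_{\rho_1}^p(x)\leq \alpha_4^p/\rho_1^p$ and bound
\[
\int_{|x|\leq \rho_1} U_{\rho_1}^p U_{\rho_2}^q \;\leq\; \frac{\alpha_4^p}{\rho_1^p}\int_{|x|\leq \rho_1} U_{\rho_2}^q\, dx.
\]
The scaling $x=\rho_2 y$ turns the right-hand integral into $\alpha_4^q \rho_2^{4-q}\int_{|y|\leq \rho_1/\rho_2}(1+|y|^2)^{-q}\,dy$; since $q<2$, the inner integral grows like $(\rho_1/\rho_2)^{4-2q}$, and multiplying through with the prefactor $1/\rho_1^p$ and using $4-2q-p=-q$ (which follows from $p+q=4$) collapses everything to $O((\rho_2/\rho_1)^q)$. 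On the outer region $|x|\geq \rho_1$ (where $|x|\geq\rho_1\geq\rho_2$), the bound $U_{\rho_i}(x)\leq \alpha_4\rho_i/|x|^2$ gives
\[
U_{\rho_1}^p U_{\rho_2}^q \;\leq\; \alpha_4^{p+q}\,\rho_1^p\rho_2^q\,|x|^{-2(p+q)} \;=\; \alpha_4^4\,\rho_1^p\rho_2^q\,|x|^{-8},
\]
and $\int_{|x|\geq \rho_1}|x|^{-8}\,dx=C\rho_1^{-4}$ in $\R^4$, so this contribution is again $O(\rho_1^{p-4}\rho_2^q)=O((\rho_2/\rho_1)^q)$.

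The second integral $\int_{\Omega_\eps}U_{\rho_2}^p U_{\rho_1}^q$ is handled by the same split. On $|x|\leq \rho_1$, I would use $U_{\rho_1}^q\leq \alpha_4^q/\rho_1^q$ and the fact that $p>2$ makes $\int_{\R^4}U_{\rho_2}^p = \alpha_4^p \rho_2^{4-p}\int_{\R^4}(1+|y|^2)^{-p}\,dy$ convergent; together with $4-p=q$ this yields the bound $C\rho_1^{-q}\rho_2^q=C(\rho_2/\rho_1)^q$. On $|x|\geq \rho_1$ the outer pointwise bound gives
\[
U_{\rho_2}^p U_{\rho_1}^q \;\leq\; \alpha_4^4\,\rho_2^p\rho_1^q\,|x|^{-8},
\]
so the contribution is $O(\rho_2^p\rho_1^{q-4})=O((\rho_2/\rho_1)^p)=o((\rho_2/\rho_1)^q)$ since $p>q$ and $\rho_2/\rho_1\to 0$.

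There is no real obstacle here: the whole content of the lemma is bookkeeping with the constraints $1<q<2<p$ and $p+q=4$. The only mild subtlety is the inner-region estimate for the first integral, where the non-integrability of $U_{\rho_2}^q$ on $\R^4$ (due to $q<2$) forces one to keep track of the exact growth of the truncated integral; the exponent algebra then miraculously balances to produce the claimed $(\rho_2/\rho_1)^q$ rate. The restriction to $\Omega_\eps$ (rather than $\Omega$) is harmless because $\eps/\rho_1,\eps/\rho_2\to 0$, so the excised ball $B_\eps$ lies in the inner region and shrinks to a point after the appropriate rescaling, contributing at most lower-order terms.
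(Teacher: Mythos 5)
Your argument is correct, and the exponent bookkeeping checks out: the crucial identities $4-p=q$ and $4-2q-p=-q$ are used exactly where needed, the inner/outer bounds $U_\delta(x)\le \alpha_4\min\{\delta^{-1},\delta|x|^{-2}\}$ are valid, and the growth rate $(\rho_1/\rho_2)^{4-2q}$ of the truncated integral in the inner region (legitimate since $q<2$) combines with the prefactor to give $(\rho_2/\rho_1)^q$. Your observation that the restriction to $\Omega_\eps$ is harmless is even a bit stronger than you state: since the integrands are nonnegative, $\int_{\Omega_\eps}\le\int_{\R^4}$ and no hypotheses on $\eps/\rho_i$ are needed for the upper bound at all.

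The route is a variant of the paper's. The paper rescales each integral once (by $\rho_1$ for the first, by $\rho_2$ for the second), and then absorbs the remaining kernel by a single global pointwise bound: $\rho_2/(\rho_2^2+\rho_1^2|y|^2)\le \rho_2/(\rho_1^2|y|^2)$ in the first case, giving the convergent weight $(1+|y|^2)^{-p}|y|^{-2q}$ on $\R^4$; and $\rho_1/(\rho_1^2+\rho_2^2|y|^2)\le 1/\rho_1$ in the second, giving $(1+|y|^2)^{-p}$ which converges because $p>2$. Your proof instead splits $\Omega_\eps$ at the scale $|x|=\rho_1$ and applies the cap bound inside and the tail bound outside. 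The paper's version is marginally shorter (no domain splitting, two one-line displays per integral), while yours makes more visible \emph{why} the exponent arithmetic balances — the inner and outer pieces produce the same rate for the first integral, and the outer piece is strictly subdominant ($O((\rho_2/\rho_1)^p)$) for the second, which is extra information the paper's bound does not separate out. Either approach is appropriate here.
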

\begin{proof}
We proceed by direct computations:
\begin{align*}
\int_{\Omega_\eps} U_{\rho_1}^p U_{\rho_2}^q & = \alpha_4^4 \int_{\Omega_\eps} \left(\frac{\rho_1}{\rho_1^2 + |x|^2}\right)^p  \left(\frac{\rho_2}{\rho_2^2 + |x|^2}\right)^q\,dx \\
& = \alpha_4^4 \rho_1^{4-p}\int_{\Omega_\eps/\rho_1} \left(\frac{1}{1 + |y|^2}\right)^p  \left(\frac{\rho_2}{\rho_2^2 + \rho_1^2 |y|^2}\right)^q\,dy \\
& \le \alpha_4^4 \left(\frac{\rho_2}{\rho_1}\right)^q \int_{\Omega_\eps/\rho_1} \left(\frac{1}{1 + |y|^2}\right)^p  \frac{1}{|y|^{2q}}\,dy \\
& \le \alpha_4^4 \left(\frac{\rho_2}{\rho_1}\right)^q \int_{\R^4} \left(\frac{1}{1 + |y|^2}\right)^p  \frac{1}{|y|^{2q}}\,dy = O\left(\left(\frac{\rho_2}{\rho_1}\right)^q\right),
\end{align*}
as desired. Analogously
\begin{align*}
\int_{\Omega_\eps} U_{\rho_2}^p U_{\rho_1}^q & = \alpha_4^4 \int_{\Omega_\eps} \left(\frac{\rho_2}{\rho_2^2 + |x|^2}\right)^p  \left(\frac{\rho_1}{\rho_1^2 + |x|^2}\right)^q\,dx \\
& = \alpha_4^4 \rho_2^{4-p}\int_{\Omega_\eps/\rho_2} \left(\frac{1}{1 + |y|^2}\right)^p  \left(\frac{\rho_1}{\rho_1^2 + \rho_2^2 |y|^2}\right)^q\,dy \\
& \le \alpha_4^4 \left(\frac{\rho_2}{\rho_1}\right)^q \int_{\Omega_\eps/\rho_2} \left(\frac{1}{1 + |y|^2}\right)^p\,dy \\
& \le \alpha_4^4 \left(\frac{\rho_2}{\rho_1}\right)^q \int_{\R^4} \left(\frac{1}{1 + |y|^2}\right)^p \,dy = O\left(\left(\frac{\rho_2}{\rho_1}\right)^q\right). \qedhere
\end{align*}
\end{proof}

\begin{lemma}\label{lem: interazione 2}
Let $\rho_{1} = \rho_1(\eps)>0$, $\rho_2 =  \rho_2(\eps)>0$, be such that 
\[
\frac{\rho_2}{\rho_1} \to 0, \quad \frac{\eps}{\rho_1} \to 0, \quad \frac{\eps}{\rho_2} \to 0
\]
as $\eps \to 0^+$. Then
\[
\int_{\Omega_\eps} U_{\rho_1}^2 U_{\rho_2}^2 = O \left(\frac{\rho_2}{\rho_1} |\log \rho_2| \right) .
\]
\end{lemma}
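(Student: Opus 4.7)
The plan is to follow the same strategy as in Lemma \ref{lem: interaction 1}: perform a rescaling by $\rho_1$ to reduce to a dimensionless integral depending on the single small parameter $\tau := \rho_2/\rho_1 \to 0$, then split the domain into regions and bound the integrand piecewise. The main difference with Lemma \ref{lem: interaction 1} is that the exponents are critical $(p=q=2)$, so a logarithmic factor appears.

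First I would write
\[
\int_{\Omega_\eps} U_{\rho_1}^2 U_{\rho_2}^2
= \alpha_4^4 \int_{\Omega_\eps} \frac{\rho_1^2\rho_2^2}{(\rho_1^2+|x|^2)^2(\rho_2^2+|x|^2)^2}\,dx
= \alpha_4^4\, \tau^2 \int_{\Omega_\eps/\rho_1} \frac{dy}{(1+|y|^2)^2 (\tau^2+|y|^2)^2},
\]
via the change of variables $x=\rho_1 y$, and then estimate the last integral over $\R^4$ (which contains $\Omega_\eps/\rho_1$ for $\eps$ small, since $\Omega$ is bounded) by splitting into three regions.

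On $\{|y|\le \tau\}$ I would use $(\tau^2+|y|^2)^2 \ge \tau^4$ and $(1+|y|^2)^{-2}\le 1$, getting a contribution bounded by $|\mathbb{S}^3|\int_0^\tau r^3\tau^{-4}\,dr = O(1)$. On the annulus $\{\tau\le |y|\le 1\}$ I would use $(\tau^2+|y|^2)^2\ge |y|^4$, giving $|\mathbb{S}^3|\int_\tau^1 r^{-1}\,dr = O(|\log \tau|)$, which produces the logarithm. Finally on $\{|y|\ge 1\}$ I would use $(\tau^2+|y|^2)^2\ge |y|^4$ and $(1+|y|^2)^2\ge |y|^4$, obtaining $\int_1^\infty r^{-5}\,dr = O(1)$. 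Summing and multiplying by $\tau^2$ gives
\[
\int_{\Omega_\eps} U_{\rho_1}^2 U_{\rho_2}^2 = O\!\left(\tau^2|\log\tau|\right) = O\!\left(\Bigl(\frac{\rho_2}{\rho_1}\Bigr)^2\!\bigl|\log(\rho_2/\rho_1)\bigr|\right).
\]

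Finally, to match the stated bound, I would note that $(\rho_2/\rho_1)^2 \le \rho_2/\rho_1$ for $\eps$ small and $|\log(\rho_2/\rho_1)|\le |\log\rho_2|+|\log\rho_1|\le 2|\log\rho_2|$ (for $\eps$ small, using $\rho_2\le \rho_1$, so $|\log\rho_1|\le|\log\rho_2|$). This gives exactly $O((\rho_2/\rho_1)|\log\rho_2|)$. There is no serious obstacle here: the only subtlety is tracking the critical region $\{\tau\le |y|\le 1\}$, where the integrand behaves like $|y|^{-4}$ and is responsible for the logarithmic divergence.
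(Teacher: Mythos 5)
Your proof is correct, and it takes a genuinely different route to the logarithm than the paper. You rescale by the \emph{larger} parameter $\rho_1$, arrive at $\tau^2\int(1+|y|^2)^{-2}(\tau^2+|y|^2)^{-2}\,dy$ with $\tau=\rho_2/\rho_1$ (an integral that converges over all of $\R^4$), and extract the log from the transition annulus $\{\tau\le|y|\le 1\}$, where neither bubble profile dominates; you then convert $|\log\tau|$ into $|\log\rho_2|$ at the end. The paper instead rescales by the \emph{smaller} parameter $\rho_2$, uses the crude bound $(\rho_1^2+\rho_2^2|y|^2)^2\ge\rho_1^4$ to reduce to $(\rho_2/\rho_1)^2\int_{\Omega_\eps/\rho_2}(1+|y|^2)^{-2}\,dy$, and here the integrand does \emph{not} decay fast enough to converge over $\R^4$, so the boundedness of $\Omega$ (which gives $\Omega_\eps/\rho_2\subset B_{R/\rho_2}$) is essential, and the logarithm arises from the truncation radius $R/\rho_2$. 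Your intermediate bound $O(\tau^2|\log\tau|)$ is in fact marginally sharper than the paper's $O((\rho_2/\rho_1)^2|\log\rho_2|)$ when $\rho_1\to0$, and your argument does not rely on the boundedness of $\Omega$ for convergence (only for the final comparison $|\log\tau|\lesssim|\log\rho_2|$, which, like the paper's step, tacitly uses $\rho_1\le 1$ for small $\eps$). Both routes are elementary and of comparable length; yours exposes more clearly \emph{where} the logarithmic loss comes from, while the paper's is shorter on bookkeeping.
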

\begin{proof}
We have
\begin{align*}
\int_{\Omega_\eps} U_{\rho_1}^2 U_{\rho_2}^2 & = \alpha_4^4 \int_{\Omega_\eps} \left(\frac{\rho_1}{\rho_1^2 + |x|^2}\right)^2  \left(\frac{\rho_2}{\rho_2^2 + |x|^2}\right)^2\,dx \\
& = \alpha_4^4 \rho_1^2 \rho_2^2 \int_{\Omega_\eps/\rho_2} \left(\frac{1}{\rho_1^2 + \rho_2^2|y|^2}\right)^2  \left(\frac{1}{1 + |y|^2}\right)^2\,dx \\
& \le \alpha_4^4 \left(\frac{\rho_2}{\rho_1}\right)^2 \int_{\Omega_\eps/\rho_2} \frac{dy}{(1+|y|^2)^2}.
\end{align*}
Since $\Omega$ is bounded, there exists a sufficiently large radius $R>0$ such that
\begin{align*}
\int_{\Omega_\eps/\rho_2} \frac{dy}{(1+|y|^2)^2} & \le \int_{B_R/\rho_2} \frac{dy}{(1+|y|^2)^2} \le C + C \int_1^{R/\rho_2} r^{-1} \, dr = C + C \log \frac{R}{\rho_2}.
\end{align*}
To sum up
\begin{align*}
\int_{\Omega_\eps} U_{\rho_1}^2 U_{\rho_2}^2 & \le \alpha_4^4 \left(\frac{\rho_2}{\rho_1}\right)^2 \left( C + C \log \frac{R}{\rho_2} \right),
\end{align*}
and the thesis follows.
\end{proof}

\begin{lemma}\label{lem: int a 3}
Let $\rho_{1} = \rho_1(\eps)>0$, $\rho_2 =  \rho_2(\eps)>0$, $\rho_3 =  \rho_3(\eps)>0$ be such that 
\[
\frac{\rho_j}{\rho_i} \to 0 \quad \text{if $1 \le i<j \le 3$}, \quad \frac{\eps}{\rho_h} \to 0 \quad \text{for every $h$},
\]
as $\eps \to 0^+$. Then
\[
\int_{\Omega_\eps} (U_{\rho_1} U_{\rho_2} U_{\rho_3})^\frac43 = O \left(\left( \frac{\rho_2}{\rho_1} \right)^\frac43\right)
\]
as $\eps \to 0^+$.
\end{lemma}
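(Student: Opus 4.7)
I would peel off the largest bubble via the pointwise bound $U_\rho(x)\le \alpha_4/\rho$ (since $U_\rho$ attains its maximum at the origin), which reduces the claim to a two-bubble estimate:
\[
\int_{\Omega_\eps}(U_{\rho_1}U_{\rho_2}U_{\rho_3})^{4/3}\,dx \;\le\; \frac{\alpha_4^{4/3}}{\rho_1^{4/3}}\int_{\R^4}(U_{\rho_2}U_{\rho_3})^{4/3}\,dx.
\]
It then suffices to prove $\int_{\R^4}(U_{\rho_2}U_{\rho_3})^{4/3}\,dx = O(\rho_3^{4/3})$, since this yields $O((\rho_3/\rho_1)^{4/3}) \le O((\rho_2/\rho_1)^{4/3})$ using $\rho_3\le \rho_2$ for $\eps$ small.

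For the two-bubble estimate I would rescale via $x=\rho_3 y$ and set $\sigma:=\rho_3/\rho_2\to 0^+$. A direct change of variables yields
\[
\int_{\R^4}(U_{\rho_2}U_{\rho_3})^{4/3}\,dx \;=\; \alpha_4^{8/3}\,\rho_2^{-4/3}\rho_3^{8/3}\int_{\R^4}\frac{dy}{(1+\sigma^2|y|^2)^{4/3}(1+|y|^2)^{4/3}},
\]
so the task becomes showing the $y$-integral is $O(\sigma^{-4/3})$. I would split $\R^4$ into $|y|\le 1/\sigma$---where $1+\sigma^2|y|^2 \le 2$, and the dominant contribution comes from the annulus $1\le |y|\le 1/\sigma$ on which the integrand is of order $|y|^{-8/3}$, producing $\int_1^{1/\sigma}r^{1/3}\,dr=O(\sigma^{-4/3})$---and $|y|\ge 1/\sigma$, where $(1+\sigma^2|y|^2)\ge \sigma^2|y|^2$ and $(1+|y|^2)\ge |y|^2$ give the integrand bound $\sigma^{-8/3}|y|^{-16/3}$, whose integral is $\sigma^{-8/3}\int_{1/\sigma}^\infty r^{-7/3}\,dr=O(\sigma^{-4/3})$. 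Substituting back, $\rho_2^{-4/3}\rho_3^{8/3}\cdot O(\sigma^{-4/3}) = O(\rho_3^{4/3})$, as required.

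The only subtle point to anticipate is that Cauchy--Schwarz pairings designed to invoke Lemma~\ref{lem: interaction 1} directly (for instance, bounding the integral by $(\int U_{\rho_1}^{8/3}U_{\rho_2}^{4/3})^{1/2}(\int U_{\rho_3}^{8/3}U_{\rho_2}^{4/3})^{1/2}$, which yields only $O((\rho_3/\rho_1)^{2/3})$) are not strong enough in general, since one cannot compare $(\rho_3/\rho_1)^{2/3}$ with $(\rho_2/\rho_1)^{4/3}$ without additional assumptions relating the three rates. Passing to the $L^\infty$ bound on the slowest-concentrating bubble $U_{\rho_1}$ is the right starting move because it isolates the essential object---the joint integral of the two faster-concentrating bubbles at scale $\rho_3$---without losing the full decay coming from the two-scale interaction.
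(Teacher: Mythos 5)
Your proof is correct. The paper rescales all three bubbles simultaneously by the smallest parameter $\rho_3$ and then bounds the rescaled factors from below: $\rho_1^2+\rho_3^2|y|^2\ge\rho_1^2$ (which is precisely your $L^\infty$ bound on the broadest bubble, in rescaled form) and $\rho_2^2+\rho_3^2|y|^2\ge\rho_3^2|y|^2$ (a deliberately crude tail estimate). The second bound is wasteful on $|y|\lesssim\rho_2/\rho_3$ but makes the resulting $y$-integral $\int_{\R^4}(1+|y|^2)^{-4/3}|y|^{-8/3}\,dy$ converge in one shot, with no domain split, and delivers the stated $O((\rho_2/\rho_1)^{4/3})$ directly. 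You instead extract $U_{\rho_1}$ first and then compute the two-bubble integral sharply by rescaling at scale $\rho_3$ and splitting at $|y|=\rho_2/\rho_3$; this yields the slightly stronger intermediate bound $O((\rho_3/\rho_1)^{4/3})$, which dominates $O((\rho_2/\rho_1)^{4/3})$ since $\rho_3\le\rho_2$ for small $\eps$. Both routes are short and rest on the same essential first move (peeling off $U_{\rho_1}$ via its sup norm); the paper's is marginally more economical since it avoids the split, while yours is tighter. Your closing remark on why H\"older/Cauchy--Schwarz pairings via Lemma~\ref{lem: interaction 1} fail is also accurate: the best such pairing gives $O((\rho_3/\rho_1)^{2/3})$, and $\rho_1\rho_3=O(\rho_2^2)$ does not follow from the stated hypotheses, so that route genuinely collapses.
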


\begin{proof}
Again, by direct computations
\begin{align*}
\int_{\Omega_\eps} (U_{\rho_1} U_{\rho_2} U_{\rho_3})^\frac43 & = \alpha_4^4 \int_{\Omega_\eps} \left( \frac{\rho_1 \rho_2 \rho_3 }{(\rho_1^2 + |x|^2)^2  (\rho_2^2 + |x|^2)^2 (\rho_3^2 + |x|^2)}\right)^\frac43 \,dx \\
& = \alpha_4^4 (\rho_3^2 \rho_1 \rho_2)^\frac43 \int_{\Omega_\eps/\rho_3} \frac{dy}{(1 + |y|^2)^{\frac43} (\rho_1^2 + \rho_3^2 |y|^2 )^\frac43 (\rho_2^2 + \rho_3^2 |y|^2 )^\frac43 } \\
& \le \alpha_4^4 \left(\frac{\rho_2}{\rho_1}\right)^\frac43 \int_{\R^4} \frac{dy}{(1 + |y|^2)^{\frac43} |y|^{\frac{8}3}}. \qedhere
\end{align*} 
\end{proof}

\medbreak


\begin{thebibliography}{10}

\bibitem{Aubin}
Thierry Aubin.
\newblock Probl{\`e}mes isop{\'e}rim{\'e}triques et espaces de {S}obolev.
\newblock {\em J. Differential Geometry}, 11(4):573--598, 1976.

\bibitem{BaCo}
A.~Bahri and J.-M. Coron.
\newblock On a nonlinear elliptic equation involving the critical {S}obolev
  exponent: the effect of the topology of the domain.
\newblock {\em Comm. Pure Appl. Math.}, 41(3):253--294, 1988.

\bibitem{BianchiEgnell}
Gabriele Bianchi and Henrik Egnell.
\newblock A note on the {S}obolev inequality.
\newblock {\em J. Funct. Anal.}, 100(1):18--24, 1991.

\bibitem{CaGiSp}
Luis~A. Caffarelli, Basilis Gidas, and Joel Spruck.
\newblock Asymptotic symmetry and local behavior of semilinear elliptic
  equations with critical sobolev growth.
\newblock {\em Communications on Pure and Applied Mathematics}, 42(3):271--297,
  1989.

\bibitem{ChenLin}
Zhijie Chen and Chang-Shou Lin.
\newblock Asymptotic behavior of least energy solutions for a critical elliptic
  system.
\newblock {\em International Mathematics Research Notices}, 21:11045--11082,
  2015.

\bibitem{ChenLinZouCPDE2014}
Zhijie Chen, Chang-Shou Lin, and Wenming Zou.
\newblock Sign-changing solutions and phase separation for an elliptic system
  with critical exponent.
\newblock {\em Comm. Partial Differential Equations}, 39(10):1827--1859, 2014.

\bibitem{ChenZou1}
Zhijie Chen and Wenming Zou.
\newblock Positive least energy solutions and phase separation for coupled
  {S}chr\"odinger equations with critical exponent.
\newblock {\em Arch. Ration. Mech. Anal.}, 205(2):515--551, 2012.

\bibitem{ChenZou2}
Zhijie Chen and Wenming Zou.
\newblock Positive least energy solutions and phase separation for coupled
  {S}chr{\"o}dinger equations with critical exponent: higher dimensional case.
\newblock {\em Calculus of Variations and Partial Differential Equations},
  52(1):423--467, 2015.

\bibitem{cf}
M{\'o}nica Clapp and Jorge Faya.
\newblock Multiple solutions to a weakly coupled purely critical elliptic
  system in bounded domains.
\newblock arXiv: 1805.10304.

\bibitem{cp}
M\'{o}nica Clapp and Angela Pistoia.
\newblock Existence and phase separation of entire solutions to a pure critical
  competitive elliptic system.
\newblock {\em Calc. Var. Partial Differential Equations}, 57(1):Art. 23, 20,
  2018.

\bibitem{c}
Jean-Michel Coron.
\newblock Topologie et cas limite des injections de {S}obolev.
\newblock {\em C. R. Acad. Sci. Paris S\'{e}r. I Math.}, 299(7):209--212, 1984.

\bibitem{GeMuPi}
Yuxin Ge, Monica Musso, and Angela Pistoia.
\newblock Sign changing tower of bubbles for an elliptic problem at the
  critical exponent in pierced non-symmetric domains.
\newblock {\em Communications in Partial Differential Equations},
  35(8):1419--1457, 2010.

\bibitem{ggt2}
Francesca Gladiali, Massimo Grossi, and Christophe Troestler.
\newblock Entire radial and nonradial solutions for systems with critical
  growth.
\newblock {\em Calc. Var. Partial Differential Equations}, 57(2):Art. 53, 26,
  2018.

\bibitem{ggt1}
Francesca Gladiali, Massimo Grossi, and Christophe Troestler.
\newblock A non-variational system involving the critical sobolev exponent. the
  radial case.
\newblock arXiv: 1603.05641.

\bibitem{GuLiWe}
Yuxia Guo, Bo~Li, and Juncheng Wei.
\newblock Entire nonradial solutions for non-cooperative coupled elliptic
  system with critical exponents in {$\Bbb{R}^3$}.
\newblock {\em J. Differential Equations}, 256(10):3463--3495, 2014.

\bibitem{l}
Roger Lewandowski.
\newblock Little holes and convergence of solutions of {$-\Delta
  u=u^{(N+2)/(N-2)}$}.
\newblock {\em Nonlinear Anal.}, 14(10):873--888, 1990.

\bibitem{MussoPistoiaIndiana2002}
Monica Musso and Angela Pistoia.
\newblock Multispike solutions for a nonlinear elliptic problem involving the
  critical {S}obolev exponent.
\newblock {\em Indiana Univ. Math. J.}, 51(3):541--579, 2002.

\bibitem{MussoPistoiaJMPA2006}
Monica Musso and Angela Pistoia.
\newblock Sign changing solutions to a nonlinear elliptic problem involving the
  critical {S}obolev exponent in pierced domains.
\newblock {\em J. Math. Pures Appl. (9)}, 86(6):510--528, 2006.

\bibitem{PePeWa}
Shuangjie Peng, Yan-fang Peng, and Zhi-Qiang Wang.
\newblock On elliptic systems with {S}obolev critical growth.
\newblock {\em Calc. Var. Partial Differential Equations}, 55(6):Art. 142, 30,
  2016.

\bibitem{PistoiaSoave}
Angela Pistoia and Nicola Soave.
\newblock On coron's problem for weakly coupled elliptic systems.
\newblock {\em Proceedings of the London Mathematical Society}, 116(1):33--67,
  2018.

\bibitem{PistoiaTavares}
Angela Pistoia and Hugo Tavares.
\newblock Spiked solutions for {S}chr{\"o}dinger systems with {S}obolev
  critical exponent: the cases of competitive and weakly cooperative
  interactions.
\newblock {\em J. Fixed Point Theory Appl.}, 19(1):407--446, 2017.

\bibitem{Rey_fr}
Olivier Rey.
\newblock Sur un probl\`eme variationnel non compact: l'effet de petits trous
  dans le domaine.
\newblock {\em C. R. Acad. Sci. Paris S\'er. I Math.}, 308(12):349--352, 1989.

\bibitem{s}
Richard Schoen.
\newblock Topics in differential geometry.
\newblock Notes from graduate lectures in Stanford University,
  http://www.math.washington.edu/pollack/research/Schoen-1988-notes.html, 1988.

\bibitem{Talenti}
Giorgio Talenti.
\newblock Best constant in {S}obolev inequality.
\newblock {\em Ann. Mat. Pura Appl. (4)}, 110:353--372, 1976.

\end{thebibliography}
\end{document}